	\patchcmd{\section}{\scshape}{\scshape\bfseries}{}{}
	\renewcommand{\@secnumfont}{\scshape\bfseries}
	\let\epsilon\varepsilon
	\newtheorem{theorem}{Theorem}
	\newtheorem{corollary}[theorem]{Corollary}
        \newtheorem*{nonumbercorollary}{Corollary}
	\newtheorem{lemma}[theorem]{Lemma}
	\newtheorem{proposition}[theorem]{Proposition}
	\newtheorem{main}{Theorem}
	\numberwithin{theorem}{section}
	\theoremstyle{plain}
	\theoremstyle{definition}
	\newtheorem{definition}[theorem]{Definition}
	\newtheorem*{definition*}{Definition}
	\newtheorem{example}[theorem]{Example}
	\theoremstyle{remark}
	\newtheorem{remark}[theorem]{Remark}
	\numberwithin{equation}{section}
	\numberwithin{table}{section}
	\def\blfootnote{\gdef\@thefnmark{}\@footnotetext}
	\DeclareMathOperator{\codim}{codim}		\DeclareMathOperator{\cod}{\codim}
	\DeclareMathOperator{\Ric}{Ric}
	\newcommand{\of}[1]{\left( #1 \right)}
	\newcommand{\ceil}[1]{\left\lceil #1 \right\rceil}
	\newcommand{\floor}[1]{\left\lfloor #1 \right\rfloor}
	\newcommand{\gH}{\mathsf{H}}
	\newcommand{\gS}{\mathsf{S}}
	\newcommand{\SO}{\mathsf{SO}}
	\newcommand{\gT}{\mathsf{T}}
	\newcommand{\gU}{\mathsf{U}}
	\newcommand{\s}{\mathbb{S}}
	\newcommand{\CP}{\mathbb{C}\mathrm{P}}
	\newcommand{\HP}{\mathbb{H}\mathrm{P}}
	\newcommand{\bQ}{\mathbb{Q}}\newcommand{\Q}{\mathbb{Q}}
	\newcommand{\bZ}{\mathbb{Z}}	\newcommand{\Z}{\bZ}
							\newcommand{\Sq}{\operatorname{Sq}}
							\renewcommand{\P}{\operatorname{P}}
	\DeclareMathSymbol{\lsb@l}{\mathalpha}{letters}{`l}
	\DeclareMathOperator{\End}{End}
	\DeclareMathOperator{\im}{im}
\title[Hopf's conjecture \& positive second intermediate Ricci curvature]{On Hopf's conjecture and positive second intermediate Ricci curvature}
\date{\today}
\author{Lee Kennard}
\address{Syracuse University, Syracuse, New York, USA}
\email{ltkennar@syr.edu}
\author{Lawrence Mouill\'e}
\address{Trinity University, San Antonio, Texas, USA}
\email{lmouille@trinity.edu}
\author{Jan Nienhaus}
\address{University of California, Los Angeles, USA}
\email{nienhaus@math.ucla.edu}
\subjclass[2020]{53C20 (Primary), 57S15 55S05(Secondary)}
\begin{document}

\begin{abstract}
    Hopf conjectured that even-dimensional closed Riemannian manifolds with positive sectional curvature have positive Euler characteristic. 
    % Nearing its hundredth birthday, the conjecture is only known to hold in dimensions two and four in general and in higher dimensions under additional assumptions. 
    The conclusion of the conjecture is known to fail if the positive sectional curvature assumption is relaxed in any number of ways, including to positive second intermediate Ricci curvature. %even in the presence of torus symmetry of rank four. 
    Here we prove that if a manifold with positive second intermediate Ricci curvature has dimension divisible by four and torus symmetry of rank at least ten, then it has positive Euler characteristic. 
    A crucial new tool is a non-trivial extension of the first author's Four Periodicity Theorem to situations where the periodicity of the cohomology does not extend all the way down to degree zero.
\end{abstract}

\maketitle

%\blfootnote{\textup{2020} \textit{Mathematics Subject Classification}: 53C20 (Primary), 57S15 55S05(Secondary)}
%\blfootnote{The first author was funded by NSF Grant DMS-2005280, and the second author was funded by NSF Award DMS-2202826.}

In this article, we investigate manifolds with torus symmetry and positive second intermediate Ricci curvature (abbreviated $\Ric_2 > 0$), a condition that is weaker than positive sectional curvature. 
Our first main result is the following:

\begin{main}\label{thm:t8}
    If $\gT^8$ acts on a closed Riemannian manifold $M$ with $\Ric_2 > 0$ and $\dim M \equiv 0 \bmod 4$, then every fixed-point component $F \subseteq M^{\gT^8}$ has $H^{odd}(F;\bQ) = 0$. In particular, $\chi(M) \geq 0$, with equality only if the fixed-point set of $\gT^8$ is empty.
\end{main}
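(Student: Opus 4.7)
The plan is to reduce the conclusion to showing $H^{odd}(F;\bQ) = 0$ for each fixed-point component $F \subseteq M^{\gT^8}$. Indeed, the Lefschetz fixed-point formula gives $\chi(M) = \chi(M^{\gT^8}) = \sum_F \chi(F)$, and the vanishing of odd cohomology forces $\chi(F) = \sum_i \dim H^{2i}(F;\bQ) > 0$ for every non-empty $F$; thus if the fixed-point set of $\gT^8$ is non-empty then $\chi(M) > 0$, while if it is empty then $\chi(M) = 0$, matching the stated conclusion.

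For the main technical work I would pass to the $2$-torus $\gT_2^8 \cong (\bZ/2)^8 \subset \gT^8$, which generically has the same fixed-point set as $\gT^8$. For each involution $\sigma \in \gT_2^8$ the fixed set $M^\sigma$ is a closed totally geodesic submanifold of $M$ and inherits $\Ric_2 > 0$. At a point $p \in F$, the normal bundle $\nu_p F$ splits orthogonally into two-dimensional $\gT^8$-weight spaces on which different involutions act as $\pm 1$; this provides a rich lattice of totally geodesic submanifolds interpolating between $F$ and $M$. The key analytic input is the connectedness principle for $\Ric_2 > 0$ of Wilking--Mouill\'e type: a closed totally geodesic $N^{n-k} \subset M^n$ with $\Ric_2 > 0$ is highly connected in $M$. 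Running the standard surgery argument on such an inclusion produces a cohomology class whose cup product realises periodicity of $H^*(M;\bQ)$ in a large range of degrees. With eight independent commuting involutions at one's disposal, several such periodicities of codimension at least $4$ can be combined to yield $4$-periodicity of $H^*(M;\bQ)$ on a terminal interval $[a, n]$, with $a > 0$ in general. Iterating by restricting to subtori and passing to their fixed sets should transfer the periodicity down to $F$.

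The final step is to invoke the extension of the first author's Four Periodicity Theorem advertised in the abstract. Classically, $4$-periodicity in positive degree \emph{together with an anchor at degree $0$} forces the rational cohomology ring to be a truncated polynomial ring on a degree-$4$ generator, and in dimension $\equiv 0 \bmod 4$ this yields $H^{odd} = 0$. Here, however, the periodicity available on $F$ will only hold on some interval $[a, \dim F]$ with $a > 0$, so the classical statement does not apply directly. I expect the main obstacle to lie precisely here: adapting the truncated-polynomial conclusion to settings where $1 \in H^0$ does not belong to the periodic range. Overcoming this will likely require Poincar\'e duality on $F$ to relate the periodicity at the top of the cohomology to the bottom, a careful analysis via Steenrod operations, and bookkeeping arguments exploiting the $\gT^8$-action on the normal bundle of $F$ to connect the two ends of $H^*(F;\bQ)$.
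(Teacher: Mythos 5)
Your high-level reduction is correct: if each fixed-point component $F$ has vanishing odd Betti numbers, then $\chi(M) = \sum_F \chi(F) > 0$ whenever $M^{\gT^8} \neq \emptyset$, and $= 0$ otherwise. You also correctly identify the pipeline Connectedness Lemma $\to$ Periodicity Lemma $\to$ partial Four Periodicity Theorem and correctly diagnose that the real difficulty is extending the periodicity theorem when $H^0$ is not anchored to the periodic range. But the proposal as written has several gaps that would prevent it from working.

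First, you propose to work with the $2$-torus $(\bZ/2)^8$ and use fixed sets $M^\sigma$ of involutions. This does not match the tools available under $\Ric_2 > 0$: the Connectedness Lemma in this setting (Theorem \ref{thm:CL}, proved in \cite[Theorem 1.7]{Mouille22b} building on Guijarro--Wilhelm) is stated only for fixed-point components of isometric \emph{circle} actions. There is no analogue stated for involutions, and indeed the proof uses the circle action crucially. Moreover, $M^\sigma$ need not be orientable and may have odd codimension, which breaks both the Periodicity Lemma (which requires orientability) and the parity arithmetic one needs to make the dimension divisibility assumption $\dim M \equiv 0 \bmod 4$ bear fruit. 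The paper stays in the category of circle subgroups throughout.

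Second, you never explain how the hypothesis $\dim M \equiv 0 \bmod 4$ enters. In the paper this is exploited twice: once via the Borel formula to choose a $\gT^7 \subset \gT^8$ whose fixed-point component $G \supseteq F$ has $\dim G \equiv 0 \bmod 4$, and again in the codimension bookkeeping inside Lemma \ref{lem:t4}, where one chooses two of the four circle factors so that the relevant transverse intersection $N_1 \cap N_h$ has dimension divisible by four, allowing Poincar\'e duality plus vanishing of $H^1$ (from finite $\pi_1$) to kill $b_3$ and thence all odd Betti numbers. This arithmetic step is the crux of why $4 \mid \dim M$ is needed, and it has no counterpart in your proposal.

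Third, you gesture toward ``adapting the truncated-polynomial conclusion'' via Poincar\'e duality, Steenrod operations, and ``bookkeeping'' but you do not supply the key new idea, which is genuinely nontrivial. The obstruction is that in a partially $k$-periodic ring $\bar H^*$, the group $\bar H^k$ need not be one-dimensional, so a periodic element $x$ may decompose as $x = a + b$ with neither summand periodic (think of $\CP^n \# \CP^n$), and the Steenrod-operation arguments of \cite{Kennard13} simply fail. The paper resolves this by introducing irreducible periodicity and proving the Decomposition Lemma (Lemma \ref{lem:Decomposition}), which splits the periodic subquotient into irreducible pieces using a semi-simplification argument over $\bZ_p$. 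Without this, or something playing the same role, the ``final step'' of your argument does not go through.

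Fourth, the intermediate structural reduction you omit is the $\gS^1$-Splitting Theorem of \cite{KennardWiemelerWilking21preprint}, applied three times to pass from $\gT^7$ to a $\gT^4$ whose isotropy representation at $G$ splits as a product of exactly four circle representations. This is exactly what makes the explicit transverse-intersection construction in Lemma \ref{lem:t4} possible. Your suggestion that ``eight independent commuting involutions'' give ``several such periodicities'' to combine is too vague to replace this reduction, and is not compatible with the circle-action hypotheses of the available Connectedness Lemma.

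In short: the skeleton is right, but the proposal substitutes the wrong symmetry group, omits the divisibility arithmetic, and leaves unnamed the central new technical input (irreducible periodicity and the Decomposition Lemma) on which the whole argument depends.
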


In the presence of positive sectional curvature, any torus action on an even-dimensional manifold has a fixed point. 
Relaxing the curvature condition to $\Ric_2>0$, we generally must pass to a subtorus of codimension two to obtain the same conclusion, as described in the Isotropy Rank Lemma proved by the second author \cite[Proposition E]{Mouille22b}. 
We therefore have the following:

\begin{nonumbercorollary}%\label{cor:t8}
    A closed Riemannian manifold with dimension divisible by four, $\Ric_2 > 0$, and $\gT^{10}$ symmetry has positive Euler characteristic.
\end{nonumbercorollary}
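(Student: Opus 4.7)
The proof of the Corollary should be a short deduction that combines two ingredients already on the table: the Isotropy Rank Lemma of the second author (\cite[Proposition E]{Mouille22b}), and Theorem~\ref{thm:t8} above. The plan is to use the former to produce a codimension-two subtorus $\gT^8 \subseteq \gT^{10}$ with a fixed point, and then to apply the latter to conclude that $\chi(M) > 0$.

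First, I note that the hypotheses of the Isotropy Rank Lemma are satisfied: $M$ is closed, has $\Ric_2 > 0$, and is even-dimensional (since $\dim M \equiv 0 \bmod 4$), and it carries an effective isometric action of $\gT^{10}$. The lemma then guarantees the existence of a subtorus $\gT^8 \subseteq \gT^{10}$ of codimension two whose fixed-point set $M^{\gT^8}$ is non-empty. This is the only place where the extra rank of $\gT^{10}$, over and above the $\gT^8$ symmetry assumed in Theorem~\ref{thm:t8}, is used; it serves only to force a $\gT^8$-fixed point.

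Second, I apply Theorem~\ref{thm:t8} to the restricted $\gT^8$-action on $M$, which is still an action by isometries on a manifold with $\Ric_2 > 0$ and $\dim M \equiv 0 \bmod 4$. The theorem gives $\chi(M) \geq 0$, with equality only if $M^{\gT^8} = \emptyset$. Combined with the first step, which ensures $M^{\gT^8} \neq \emptyset$, this forces $\chi(M) > 0$.

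There is essentially no obstacle here beyond correctly chaining the two cited results; the substantive content has been offloaded to Theorem~\ref{thm:t8} (whose proof presumably rests on the extended Four Periodicity Theorem advertised in the abstract) and to the Isotropy Rank Lemma. If one wished to spell out the last step in more detail, one could instead note that $\chi(M) = \chi(M^{\gT^8})$ by the standard torus-action formula, and that each component $F$ of $M^{\gT^8}$ satisfies $\chi(F) = \sum_i b_{2i}(F;\bQ) \geq 1$ because $H^{\mathrm{odd}}(F;\bQ) = 0$ by Theorem~\ref{thm:t8}; summing over the non-empty set of components again yields $\chi(M) > 0$.
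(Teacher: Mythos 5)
Your argument is correct and matches the paper's reasoning exactly: pass to a codimension-two subtorus $\gT^8\subseteq\gT^{10}$ with a fixed point via the Isotropy Rank Lemma of \cite[Proposition E]{Mouille22b}, then invoke Theorem~\ref{thm:t8} to conclude $\chi(M)>0$. The paper does not spell out the deduction beyond the sentence preceding the corollary, but what you wrote is precisely the intended proof.
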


In contrast to this result, we have the following examples of even-dimensional manifolds with $\Ric_2>0$ and zero Euler characteristic:
\begin{itemize}
    \item The $6$-manifold $\s^3 \times \s^3$ admits a metric with $\Ric_2 > 0$ and $\gT^3$ symmetry, as established by Wilking (see \cite[Example 2.3]{Mouille22b}). 
    \item The $14$-manifold $\s^7 \times \s^7$ admits a metric with $\Ric_2 > 0$ and $\gT^4$ symmetry, as constructed in \cite{DVDVGARVpreprint}. %by DeVito, Dom\'inguez-V\'azquez, Gonz\'alez-\'Alvaro, Rodr\'iguez-V\'azquez (see \cite{DVDVGARVpreprint}).
\end{itemize}

To compare our results to the case of positive sectional curvature, one can relax the torus symmetry and remove the divisibility condition on the dimension and still recover positive Euler characteristic. 
The first result of this type where the torus rank does not depend on the manifold dimension is in \cite{KennardWiemelerWilking21preprint} and required $\gT^5$ symmetry.
This result was improved by the third author to show that $\gT^4$ symmetry is sufficient (see \cite{Nienhaus-pre}). 

As for the divisibility assumption, we use it here for the same reason it appeared in earlier work (e.g., \cite{Kennard13}). 
In particular, we apply it to avoid dealing with $(4n+2)$-dimensional manifolds with four-periodic rational cohomology and negative Euler characteristic. 
These are excluded using the $b_3$ Lemma in \cite[Lemma 3.2]{KennardWiemelerWilking21preprint} under special geometric assumptions coming from positive sectional curvature and torus symmetry, but here we would require a stronger form of this result since the Four Periodicity in our proofs is only partial and does not extend all the way to degree zero (see Theorem \ref{thm:P4PT}).

Now generally speaking, an $n$-dimensional Riemannian manifold has \textit{positive $k$-th intermediate Ricci curvature} ($\Ric_k > 0)$ for $k\in\{1,\dots,n-1\}$ if
\[
    \sum_{i=1}^{k} \sec(x, y_i) > 0
\]
for all orthonormal vectors $x,y_1,\dots,y_k$, where $\sec(x,y_i)$ denotes the sectional curvature of the plane spanned by $x$ and $y_i$.
Notice $\Ric_1 > 0$ is equivalent to positive sectional curvature, $\Ric_{n-1}>0$ is equivalent to positive Ricci curvature, and 
$\Ric_k > 0$ implies $\Ric_l > 0$ for all $l \geq k$. 
%%if $\Ric_k > 0$ for some $k$, then $\Ric_l > 0$ for all $l\geq k$. 
Thus in the context of intermediate Ricci curvature, $\Ric_2 > 0$ is the condition closest to positive sectional curvature.

% Given a two-plane $\sigma$ of a tangent space in a Riemannian manifold, the sectional curvature is defined by the formula $\sec(\sigma) = R(x,y,y,x)$ where $R$ is the curvature tensor and $\{x,y\}$ is any choice of orthonormal basis for $\sigma$. 
% Similarly, for any integer $k$ satisfying $1 \leq k \leq n - 1$, we define the $k$-th intermediate Ricci curvature ($\Ric_k > 0)$ if  of a $(k+1)$-plane $\sigma$ and line $\tau \subset \sigma$ by
% 	\[\Ric_k(\tau,\sigma) = \sum_{i=1}^{k} \sec(x, y_i)\]
% where $\{x, y_1, \ldots, y_k\}$ is any choice of orthonormal basis for $\sigma$ such that $x \in \tau$. 
% This notion was defined in the book by Bishop and Crittenden \cite{BishopCrittenden64} and has received sporadic attention over the last 60 years.

Recent work has strengthened {\it constructions} of metrics with positive Ricci curvature to show many of these manifolds also admit positive $\Ric_k$ for some range of values of $k$ less than $n - 1$. One highlight of this work is due to Reiser and Wraith, whose construction shows that Gromov's Betti number bound fails for positive $\Ric_k$ for all values of $k$ somewhat larger than half the manifold dimension \cite{ReiserWraith23}. 
This generalizes the celebrated examples of Sha and Yang \cite{ShaYang91} (cf. \cite{Perelman97,Burdick19,Burdick20,ReiserWraith23preprint}).
For examples of homogeneous spaces with $\Ric_k > 0$ for small values of $k$, see \cite{DVGAM22,AmannQuastZarei22preprint-v2,DVDVGARVpreprint}.

On the other hand, recent work has extended {\it obstructions} for metrics with positive sectional curvature to positive intermediate Ricci curvatures for values of $k$ larger than one. 
For example, there are extensions of the Synge and Weinstein Theorems \cite{Wilhelm97}, the Gromoll-Meyer Theorem and Cheeger-Gromoll Soul Theorem \cite{Shen93,GuijarroWilhelm20}, and the Quarter-pinched Sphere Theorem \cite{Shen90,Xia97,GuijarroWilhelm22}.
In \cite{Mouille22b}, the second author extended the Berger-Sugahara Isotropy Rank Lemma, Grove-Searle Symmetry Rank Bound, and Wilking Connectedness Lemma with symmetries (building on work by Guijarro and Wilhelm from \cite{GuijarroWilhelm22} in the latter case; see Theorem \ref{thm:CL} below). 
The first and second authors in \cite{Mouille22b,KennardMouille24} subsequently extended the the Grove-Searle Maximal Symmetry Rank Theorem \cite{GroveSearle94} in large dimensions.

We note that for any $k \geq 1$, there exists a natural number $n(k)$ such that in all dimensions larger than $n(k)$, the only known examples of closed simply connected manifolds with $\Ric_k > 0$ are the rank one symmetric spaces. 
Therefore, motivated by our first main result and \cite{KennardWiemelerWilking21preprint,Nienhaus-pre}, one might ask whether there exists $d(k)$ such that any closed even-dimensional manifold $M$ with $\Ric_k > 0$ and $\gT^{d(k)}$ symmetry has positive Euler characteristic. 
% (Such a manifold must have dimension at least $2d(k)-k+2$ by \cite[Prop. 1.6]{Mouille22b}.)
However, a key ingredient for the $k=2$ case considered here is the fact that the $\Ric>0$ implies $b_1(M)=0$ by Myers theorem, which in conjunction with the Connectedness Lemma and Periodicity Lemma as described below, allows us to show that the odd Betti numbers vanish. 
Because these tools are weaker under the assumption $k\geq 3$, new ideas would be required in this setting.

% Here we focus on the condition $\Ric_2 > 0$, which is the one closest to positive sectional curvature. Here two examples are of special interest:

% There are additional examples, including quotients of these ones such as $\s^2 \times \s^3$ and $\CP^3 \times \CP^3$ and related examples such as $S^6 \times S^7$, but we highlight these since they have even dimension, positive curvature in a sense close to positive sectional curvature, and non-positive Euler characteristic. 
% Moreover these examples have torus symmetry, so a natural question is to ask how much additional torus symmetry is required to recover a positive answer to Hopf's conjecture that even-dimensional Riemannian manifolds with positive sectional curvature have positive Euler characteristic. 
% Our first main result is the following:

Another major conjecture for manifolds with lower bounds on sectional curvature is the Ellipticity Conjecture due to Bott, Grove, and Halperin (see \cite{Grove02, GroveHalperin82}). The conjecture states that a manifold with almost non-negative sectional curvature is rationally elliptic. In even dimensions, this condition implies that the Euler characteristic is positive if and only if the Betti numbers vanish in odd degrees. Theorem \ref{thm:t8} may be viewed as evidence for this since it provides a confirmation for manifolds arising as fixed-point components of $\gT^8$-actions on larger, positively curved manifolds. 

Under additional geometric assumptions, we can also prove the vanishing of odd Betti numbers for the ambient manifold. The main technical result for achieving this is the following partial analogue of Theorem A in \cite{KennardWiemelerWilking25}, which is the same except for a stronger positive curvature assumption and correspondingly a stronger cohomological conclusion.

\begin{main}\label{thm:tcig}
If $\gT^9$ acts with connected isotropy groups and a non-trivial fixed-point set on a closed, orientable Riemannian manifold $M$ with $\Ric_2 > 0$, then the rational cohomology of $M$ is four-periodic from degree $1$ to degree $n - 1$. %That is, there exists $x \in H^4(M; \bQ)$ such that multiplication by $x$ induces maps $H^i(M; \bQ) \to H^{i+4}(M; \bQ)$ that are surjective for $i = 1$, injective for $i = n - 5$, and bijective for all $1 < i < n - 5$.
\end{main}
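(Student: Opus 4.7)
The plan is to adapt the argument used for the analogous positive sectional curvature result \cite[Theorem A]{KennardWiemelerWilking25} by substituting the $\Ric_2$ version of the Connectedness Lemma (Theorem~\ref{thm:CL}) for Wilking's, and invoking the Partial Four Periodicity Theorem~\ref{thm:P4PT} in place of the classical Periodicity Lemma. I would fix any $p \in M^{\gT^9}$, which exists by the non-triviality hypothesis, and decompose the isotropy representation of $\gT^9$ on $T_pM$ as $T_pF$ (where $F$ is the fixed-point component through $p$) together with a sum of $2$-dimensional $\gT^9$-invariant subspaces carrying non-trivial characters $\alpha_1,\ldots,\alpha_m \colon \gT^9 \to \gS^1$. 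Connectedness of isotropy groups implies that each $H_i := \ker(\alpha_i)^0$ is a codimension-one subtorus of $\gT^9$ and that the component $N_i$ of $M^{H_i}$ containing $p$ is a closed, totally geodesic submanifold whose codimension in $M$ equals twice the number of weights proportional to $\alpha_i$.

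The core step is to locate an $N_i$ of small codimension, ideally at most $4$, via a combinatorial analysis of the arrangement of weights in $\mathfrak{t}^{9*}$. Since $H_i \cong \gT^8$ fixes $N_i$ pointwise, the $\Ric_2$ Connectedness Lemma with symmetry (Theorem~\ref{thm:CL}) guarantees that the inclusion $N_i \hookrightarrow M$ is highly connected, with the fixing torus contributing an extra $8$ to the connectedness estimate — comfortably inside the hypothesis range of Theorem~\ref{thm:P4PT}. When $\codim N_i = 4$, I would apply Theorem~\ref{thm:P4PT} directly to obtain four-periodicity of $H^*(M;\Q)$ in the range $[1, n-1]$; when $\codim N_i = 2$, I would first derive the analogous partial two-periodicity and then square the resulting periodicity class to promote to four-periodicity in the same range.

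The principal obstacle will be the weight-combinatorics step: a priori the characters $\alpha_i$ could concentrate into just a few proportionality classes, so that every $N_i$ has large codimension. Ruling this out requires a careful analysis of the $9$-dimensional weight lattice, possibly combined with the Isotropy Rank Lemma of \cite{Mouille22b} applied to suitably chosen subtori in order to increase the effective number of weight classes. It is worth noting that the \emph{partial} nature of Theorem~\ref{thm:P4PT} — giving periodicity only from degree $1$ to $n-1$ rather than on all of $H^*(M;\Q)$ — precisely matches the weaker connectedness estimate available under $\Ric_2 > 0$, and this compatibility is the reason Theorem~\ref{thm:P4PT} was developed in the present paper.
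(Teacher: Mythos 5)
Your proposal has a genuine gap at the step you yourself flag as the ``principal obstacle.'' You want to locate a fixed-point component $N_i$ of a circle in $\gT^9$ with $\codim N_i \le 4$ and then apply Theorem~\ref{thm:P4PT} directly. But first, the codimension of $N_i = M^{H_i}$ through $p$ is twice the number of weights \emph{not} proportional to $\alpha_i$ (not proportional to $\alpha_i$, as you wrote): $H_i = \ker(\alpha_i)^0$ acts trivially on exactly those weight spaces whose weight is a rational multiple of $\alpha_i$. So small codimension requires all but at most two weights to lie in a \emph{single} proportionality class. There is no combinatorial reason this should happen; the nine-dimensional weight lattice can produce many proportionality classes of roughly equal (and large) multiplicity, and the Isotropy Rank Lemma gives no control over multiplicities. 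Additionally, Theorem~\ref{thm:CL} as stated in the paper has no symmetry bonus, so the ``extra $8$'' to the connectedness estimate is not available; even in Wilking's formulation the bonus is the dimension of the principal orbit on the normal space, which can be much less than the rank of the fixing torus when weights are proportional.

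The paper takes an essentially different route. Rather than searching for a single low-codimension submanifold, it uses the estimates of \cite[Theorem C]{KennardWiemelerWilking25} to produce a nested chain $F_3 \subseteq F_2 \subseteq F_1 \subseteq M$ of fixed-point components (of $\gT^3 \supseteq \gT^2 \supseteq \gT^1 \subseteq \gT^9$) with codimension controlled as \emph{ratios}: $\cod(F_3 \subseteq F_2) \le \tfrac{3}{10}\dim F_2$, $\cod(F_2 \subseteq F_1) \le \tfrac{2}{7}\dim F_1$, $\cod(F_1 \subseteq M) \le \tfrac{1}{4}n$. This forces $\dim F_3 \geq \tfrac{n+1}{3}$. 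Lemma~\ref{lem:t3} (itself using the refined $\gS^1$-Splitting Theorem and a case analysis) establishes partial four-periodicity of $H^*(F_3;\Q)$, and then one propagates this up the chain to $M$ by repeatedly combining the Connectedness Lemma with Lemmas~\ref{lem:uptoc} and~\ref{lem:k1plus2k2}, with separate handling of small-codimension exceptional cases. Your instinct to replace the classical inputs with their $\Ric_2$ analogues (Theorem~\ref{thm:CL} and Theorem~\ref{thm:P4PT}) is correct, but without the ratio-controlled nested chain the argument does not get off the ground.
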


The conclusion means that there exists $x \in H^4(M; \bQ)$ such that multiplication by $x$ induces maps $H^i(M; \bQ) \to H^{i+4}(M; \bQ)$ that are surjective for $i = 1$, injective for $i = n - 5$, and bijective for all $1 < i < n - 5$. This notion of periodicity was first discovered to arise in positive curvature by Wilking \cite{Wilking03}. 
An analysis of this condition showing that in special situations this implies Four Periodicity in all degrees was carried out by the first author \cite{Kennard13}, and an extension of this work to partial periodicity as stated in Theorem \ref*{thm:tcig} was first systematically analyzed by the third author in his M.Sc. thesis \cite{NienhausMSc}.

To ensure the existence of a $\gT^9$-action with a fixed-point under the condition $\Ric_2>0$, we may pass to a codimension two subtorus of a $\gT^{11}$ that acts on $M$. Moreover in dimensions divisible by four, the existence of a $\gT^{10}$-action implies positive Euler characteristic by Theorem \ref{thm:t8}, so the entire $\gT^{10}$ already has a fixed-point. Combined with Poincar\'e duality in dimensions congruent to zero or one modulo four, we obtain two especially nice consequences:

\begin{nonumbercorollary}
If $\gT^{10}$ acts with connected isotropy groups on a closed Riemannian manifold $M^{4n}$ with $\Ric_2 > 0$, then $H^{odd}(M; \bQ) = 0$.
\end{nonumbercorollary}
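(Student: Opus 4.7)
My plan is to combine Theorems~\ref{thm:t8} and~\ref{thm:tcig} with Myers' theorem and Poincar\'e duality: first use Theorem~\ref{thm:t8} to produce a $\gT^{10}$-fixed point, then restrict to an appropriate subtorus $\gT^9 \leq \gT^{10}$ so that Theorem~\ref{thm:tcig} applies, and finally cascade the resulting partial four-periodicity through all odd degrees.

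Since $\Ric_2 > 0$ implies $\Ric > 0$, Myers' theorem forces $b_1(M;\bQ) = 0$. Because $\dim M \equiv 0 \bmod 4$ and $\gT^{10}$ acts on $M$, the unnumbered corollary to Theorem~\ref{thm:t8} gives $\chi(M) > 0$; combined with $\chi(M) = \chi(M^{\gT^{10}})$ for torus actions on closed manifolds, this shows $M^{\gT^{10}} \neq \emptyset$. I also need $M$ to be orientable, which in even dimensions follows from a Synge-type theorem for $\Ric_2 > 0$ (as cited in the introduction); alternatively one can pass to the oriented double cover, to which the connected group $\gT^{10}$ lifts and whose odd Betti numbers vanish if and only if those of $M$ do.

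Next, I would select a subtorus $\gT^9 \leq \gT^{10}$ whose induced action has connected isotropy groups. Since each $(\gT^{10})_x$ is a connected subtorus and there are only finitely many isotropy types on the compact $M$, a generic rank-nine subtorus satisfies $\gT^9 \cap (\gT^{10})_x$ connected for every $x \in M$. Because $M^{\gT^{10}} \subseteq M^{\gT^9}$, the fixed-point set of this $\gT^9$ is non-trivial, so Theorem~\ref{thm:tcig} produces a class $x \in H^4(M;\bQ)$ such that multiplication by $x$ gives maps $H^i(M;\bQ) \to H^{i+4}(M;\bQ)$ that are surjective for $i = 1$, bijective for $1 < i < 4n-5$, and injective for $i = 4n-5$.

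I would conclude by bootstrapping vanishing in both directions. Starting from $H^1(M;\bQ) = 0$, surjectivity gives $H^5(M;\bQ) = 0$, and iterated bijectivity propagates up to $H^{4k+1}(M;\bQ) = 0$ for all $0 \leq k \leq n-1$. Poincar\'e duality (using orientability of $M^{4n}$) gives $H^{4n-1}(M;\bQ) \cong H^1(M;\bQ) = 0$; the injection at $i = 4n-5$ then forces $H^{4n-5}(M;\bQ) = 0$, and the bijections run backwards to give $H^{4k+3}(M;\bQ) = 0$ for all $0 \leq k \leq n-2$. Collecting both residues modulo four yields $H^{\mathrm{odd}}(M;\bQ) = 0$. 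The main obstacle I expect is not the cohomological cascade itself but securing a $\gT^9 \leq \gT^{10}$ with connected isotropy \emph{and} a fixed point, since connected isotropy for the ambient torus does not automatically descend to an arbitrary rank-nine subtorus; the generic-subtorus argument must be made precise in this geometric setting before Theorem~\ref{thm:tcig} can be invoked.
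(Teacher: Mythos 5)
Your overall plan matches the paper's: use Theorem~\ref{thm:t8} (in the form of its corollary) to get $\chi(M) > 0$ and hence $M^{\gT^{10}} \neq \emptyset$, apply Theorem~\ref{thm:tcig} to get four-periodicity on degrees $1$ to $4n-1$, and then cascade using $b_1 = 0$ (Myers) and Poincar\'e duality. The cohomological cascade is exactly the argument used in Corollary~\ref{cor:bodd}, and your orientability reduction via the oriented double cover is sound (the lifted $\gT^{10}$-action still has connected isotropy, since the associated deck homomorphism from a connected group to $\bZ_2$ is trivial).

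The one genuine misstep is the "generic rank-nine subtorus" step. It is simply not true that a generic hyperplane subtorus of $\gT^{10}$ intersects a given connected subtorus $S$ in a connected subgroup, nor that it has connected isotropy even when $\gT^{10}$ does. Already for $\gT^2$ acting on $\bC^2$ with weights $(2,1)$ and $(1,1)$ the isotropy groups of $\gT^2$ are all connected, but a circle $\{(s^p,s^q)\}$ has connected isotropy only when $|2p+q|\le 1$ and $|p+q|\le 1$, i.e.\ for exactly three of the infinitely many primitive directions. So genericity fails and, more to the point, the extraction is unnecessary. The intended reading is that Theorem~\ref{thm:tcig} (more precisely, the input \cite[Theorem~C]{KennardWiemelerWilking25} that drives its proof, which produces the nested chain $F_3 \subseteq F_2 \subseteq F_1$ with codimension bounds $\tfrac{3}{10}, \tfrac{2}{7}, \tfrac{1}{4}$) applies verbatim to any torus of rank at least nine acting with connected isotropy and a fixed point; more symmetry only improves those estimates. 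So one applies that machinery directly to the $\gT^{10}$ rather than first passing to a rank-nine subtorus. With that replacement, the rest of your argument goes through and agrees with the paper's proof.
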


\begin{nonumbercorollary}
If $\gT^{11}$ acts with connected isotropy groups on a closed Riemannian manifold $M^{4n+1}$ with $\Ric_2 > 0$, then 
%$H^*(M;\Q) \cong H^*(\s^{4n+1};\Q)$.
$M$ is a rational homology sphere.
\end{nonumbercorollary}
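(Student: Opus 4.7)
The plan is to invoke Theorem~\ref{thm:tcig} on $M^{4n+1}$ to obtain partial four-periodicity, then use Myers' theorem and Poincar\'e duality to collapse every intermediate Betti number. First I would select a subtorus $\gT^9 \subseteq \gT^{11}$ satisfying the hypotheses of Theorem~\ref{thm:tcig}: by the Isotropy Rank Lemma \cite[Proposition E]{Mouille22b}, some point $p \in M$ has isotropy of rank at least $9$, so any $\gT^9 \subseteq \gT^{11}_p$ fixes $p$; and a generic such $\gT^9$ inherits connected isotropy from $\gT^{11}$, since the intersections $\gT^9 \cap \gT^{11}_q$ are connected away from a finite union of codimension conditions on the Grassmannian of $9$-dimensional subtori of $\gT^{11}$. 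Theorem~\ref{thm:tcig} then produces a class $x \in H^4(M;\bQ)$ such that multiplication by $x$ is surjective in degree $1$, bijective in degrees $2 \le i \le 4n - 5$, and injective in degree $4n - 4$.

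Since $\Ric_2 > 0$ implies positive Ricci curvature, Myers' theorem gives $H^1(M;\bQ) = 0$. Assuming $M$ is orientable (a point to be verified separately from the existence of a torus fixed point and the finiteness of $\pi_1$), Poincar\'e duality then yields $H^{4n}(M;\bQ) = 0$. Propagating the vanishing up the ``$\equiv 1 \pmod 4$'' ladder -- surjectivity from $H^1$ to $H^5$, followed by the chain of bijectivities $H^5 \cong H^9 \cong \cdots \cong H^{4n-3}$ -- gives $H^{4k+1}(M;\bQ) = 0$ for $k = 0, 1, \ldots, n - 1$. Applying Poincar\'e duality to $H^{4n-3}(M;\bQ) = 0$ delivers the key equation $H^4(M;\bQ) = 0$.

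The rest of the argument is forced: the periodicity class $x$ lies in $H^4(M;\bQ) = 0$ and is therefore itself zero, so multiplication by $x$ is the zero map on every $H^i(M;\bQ)$. But a zero map between two vector spaces can only be bijective when both source and target vanish, so the bijectivity statement in Theorem~\ref{thm:tcig} forces $H^i(M;\bQ) = 0$ for all $2 \le i \le 4n - 1$. Combined with $H^1 = H^{4n} = 0$ and $H^{4n+1}(M;\bQ) = \bQ$, this proves that $M$ is a rational homology sphere. I expect the main obstacle to be the simultaneous securing of the fixed-point and connected-isotropy hypotheses when restricting from $\gT^{11}$ to $\gT^9$; once this is in place, the cohomological collapse is essentially automatic, driven by the striking observation that the periodicity class must itself vanish.
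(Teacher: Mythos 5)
Your argument is correct and takes essentially the same route the paper intends: pass from $\gT^{11}$ to a fixed-point-admitting $\gT^9$ via the Isotropy Rank Lemma, apply Theorem~\ref{thm:tcig} to get partial four-periodicity, then combine Myers' theorem ($H^1 = 0$) with Poincar\'e duality in dimension $\equiv 1 \bmod 4$ to collapse everything. Your key observation — that $H^4(M;\bQ)=0$ forces the periodicity class $x$ itself to vanish, whence the bijectivity of $\cup x$ annihilates all intermediate cohomology — is exactly the step the paper elides when it says ``Combined with Poincar\'e duality,'' and you have reconstructed it correctly. The concern you flag about securing connected isotropy for the restricted $\gT^9$ is genuine but is not addressed explicitly in the paper either (the paper simply asserts one may ``pass to a codimension two subtorus''); in practice it is handled because the proof of Theorem~\ref{thm:tcig} only uses connected isotropy in a neighborhood of the fixed point and begins by passing to the fixed-point component of a maximal finite isotropy group, as one sees in the proof of Lemma~\ref{lem:t3}, so the global hypothesis in the corollary is stronger than what is actually consumed.
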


To compare these corollaries to what is known for manifolds with positive sectional curvature, one can relax the assumptions in the first corollary to $\gT^9$ symmetry, drop the condition that $\dim M \equiv 0 \bmod 4$, and strengthen the conclusion to state that the rational cohomology agrees with a sphere, complex projective space, or quaternionic projective space in all degrees in the even-dimensional case. 
Similarly in the second corollary, one can relax to $\gT^{10}$ symmetry, drop the condition $\dim M \equiv 1 \bmod 4$, and derive the same conclusion in odd dimensions. Since there are no additional known examples admitting $\Ric_2 > 0$ and torus symmetry in this range beyond those admitting positive sectional curvature, further exploration is required to determine whether Theorem \ref{thm:tcig} and its corollaries are optimal.

We already discussed extensions of the Isotropy Rank Lemma and Wilking's Connectedness Lemma to the case of positive intermediate Ricci curvature. 
These continue to be crucial tools in our work. 
Another major tool is the $\gS^1$ Splitting Theorem of the first author, Wiemeler, and Wilking \cite[Theorem F]{KennardWiemelerWilking21preprint}. As this result does not involve curvature, it carries over to the setting of $\Ric_2 > 0$.

A key tool from positive sectional curvature that does not easily extend to $\Ric_2 > 0$ is the Four Periodicity Theorem of the first author \cite[Theorem C]{Kennard13}. 
The main technicality is that the periodic cohomology arising from Wilking's Connectedness and Periodicity Lemmas does not start at degree zero. Indeed, the weaker positive curvature  condition leads to a weaker conclusion in the Connectedness Lemma. The main problem then is that the cohomology group $H^k(M;\Z_p)$ of a $k$-periodic cohomology ring is not necessarily one-dimensional. This assumption is crucial to the first author's proof of the Four Periodicity Theorem.

This technicality is difficult to overcome, but the third author managed to do so in his 2018 M.Sc. thesis. There are two key ideas. The first is the {\it definition of irreducibly periodic cohomology} and an observation that these algebras behave enough like (fully) periodic cohomology so that the ideas in \cite{Kennard13} can be made to work. The second is a non-trivial {\it Decomposition Lemma}, which states that partially periodic cohomology algebras decompose as a sum of irreducible periodic cohomology algebras. 
As an example, a connected sum of complex projective spaces, in a sense made precise below, decomposes into the sum of two copies of two-periodic cohomology rings. 
The Decomposition Lemma then allows the proof of the Partial Four Periodicity Theorem to be reduced to the irreducible case.

As these results on partially periodic cohomology have neither appeared nor been applied in published form, we include their proofs here. An illustrative consequence is the following (see Section \ref{sec:Partial4PT}):

%\begin{main}[Partial Four Periodicity Theorem]\label{thm:P4PT}
%Assume $M^n$ is a closed, orientable $n$-manifold such that $H^*(M^n; \bZ)$ is $k$-periodic on degrees $1 \leq * \leq n -1$ and satisfies $3k \leq n - 1$. If moreover $H^1(M; \Z_p) = 0$ for $p = 2$ and $p = 3$, then $H^*(M; \bQ)$ is $\gcd(4,k)$-periodic on degrees $1 \leq * \leq n - 1$.
%\end{main}
%
%A consequence proved in Section \ref{sec:Partial4PT} (see Corollary \ref{cor:bodd}) is the following:
%
\begin{main}\label{thm:bodd-intro}
If $N^{n-k} \to M^n$ is a $(n - k - 1)$-connected inclusion of closed manifolds with finite fundamental group and $3k\leq n - 1$, then the universal cover of $M$ has $\gcd(4,k)$-periodic rational cohomology on degrees $1$ to $n-1$. 

In particular, the odd Betti numbers of $M$ vanish when $k \equiv 2 \bmod 4$ or $n \equiv 0 \bmod 4$ in even dimensions, and $M$ is a rational sphere if $n \equiv 1 \bmod 4$. 
%all Betti numbers away from the bottom and top degree vanish if $n \equiv 1 \bmod 4$. 
\end{main}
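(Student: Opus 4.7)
The plan is to reduce to the universal cover, extract $k$-periodic rational cohomology via a topological Periodicity Lemma, and then invoke the Partial Four Periodicity Theorem (Theorem~\ref{thm:P4PT}) to upgrade this to $\gcd(4,k)$-periodicity.

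First I would lift to the universal cover. Since $3k \leq n - 1$, the inclusion $N \hookrightarrow M$ is at least $2$-connected and induces an isomorphism on $\pi_1$. Because $\pi_1(M)$ is finite, it lifts to an $(n-k-1)$-connected inclusion $\tilde N \hookrightarrow \tilde M$ of closed, simply connected (hence orientable) manifolds. Since $H^*(M;\bQ)$ is the $\pi_1(M)$-invariant part of $H^*(\tilde M;\bQ)$, vanishing statements for $\tilde M$ descend to $M$.

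Next I would apply the standard topological Periodicity Lemma (cf.\ \cite{Wilking03, Kennard13}) to the $(n - k - 1)$-connected inclusion $\tilde N \hookrightarrow \tilde M$: Poincar\'e--Lefschetz duality yields a class $x \in H^k(\tilde M;\bQ)$ whose cup product action is surjective $H^1 \to H^{1+k}$, injective $H^{n-k-1} \to H^{n-1}$, and bijective in strictly intermediate degrees, which is $k$-periodicity from degree $1$ to $n - 1$. The hypothesis $3k \leq n - 1$ is exactly the range condition for Theorem~\ref{thm:P4PT}, which upgrades the $k$-periodic structure to $\gcd(4, k)$-periodicity on the same range, establishing the main assertion.

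For the ``in particular'' statements, $H^1(\tilde M;\bQ) = 0$ by simple connectedness and $H^{n-1}(\tilde M;\bQ) = 0$ by Poincar\'e duality. When $k \equiv 2 \bmod 4$, the $2$-periodic structure propagates $H^1 = 0$ through all odd degrees in $[1, n-1]$, covering every odd degree in even dimensions. When $n \equiv 0 \bmod 4$, the same propagation handles the subcases $\gcd(4,k) \in \{1, 2\}$, while in the $4$-periodic subcase one additionally uses $H^{n-1} = 0$ (with $n - 1 \equiv 3 \bmod 4$) to kill the degrees $\equiv 3 \bmod 4$, forcing all odd Betti numbers to vanish. When $n \equiv 1 \bmod 4$, odd $k$ directly yields $\tilde M$ a rational sphere (since $\gcd = 1$ collapses all of $H^{>0}$ in the range); $k \equiv 2 \bmod 4$ combines $2$-periodicity with Poincar\'e duality to force vanishing in even degrees from vanishing in odd degrees; and $k \equiv 0 \bmod 4$ uses the observation that $H^8(\tilde M;\bQ) = 0$ forces the periodic generator $z \in H^4$ to satisfy $z^2 = 0$, so multiplication by $z^2$ is simultaneously a composition of bijections and the zero map on the middle range, killing those degrees and, via the periodic chains, everything in $(0, n)$.

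The principal obstacle is applying Theorem~\ref{thm:P4PT}, which extends Kennard's Four Periodicity Theorem to partial periodicity via the Decomposition Lemma and irreducibly periodic algebras discussed in the introduction. Once that machinery is in place, the remaining steps are elementary manipulations with cup products and Poincar\'e duality.
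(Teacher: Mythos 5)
Your overall route matches the paper's proof of Corollary~\ref{cor:bodd}: lift to the universal cover (using finiteness of $\pi_1$ and the fact that an $(n-k-1)$-connected inclusion with $3k \leq n-1$ is at least $2$-connected), apply the Periodicity Lemma, feed the result into Theorem~\ref{thm:P4PT}, and finish with Poincar\'e duality. There is, however, one slip that would break the argument as literally written: you place the periodicity class $x$ in $H^k(\tilde M;\bQ)$, but Theorem~\ref{thm:P4PT} requires $H^*(\tilde M;\bZ)$ to be $k$-periodic --- its proof reduces modulo $2$ and modulo $3$, and rational periodicity alone carries no $\bZ_p$ information. This is easily repaired: the Periodicity Lemma (Theorem~\ref{thm:PT}) produces an integral class $e \in H^k(\tilde M;\bZ)$, and you should also record that simple connectedness of $\tilde M$ gives $H^1(\tilde M;\bZ_p)=0$ for $p=2,3$, which is the remaining hypothesis of Theorem~\ref{thm:P4PT}. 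With those corrections your argument is sound. I'll also note a genuine addition on your part: the paper's Section~\ref{sec:Partial4PT} only writes out Corollary~\ref{cor:bodd}, which omits the rational sphere conclusion for $n \equiv 1 \bmod 4$. Your case analysis there, particularly the observation in the $\gcd(4,k)=4$ subcase that $H^8(\tilde M;\bQ)=0$ (via Poincar\'e duality and vanishing of degrees $\equiv 1 \bmod 4$) forces $z^2 = 0$, whence degrees $\equiv 2,3 \bmod 4$ must also vanish because multiplication by $z^2$ is simultaneously a composition of injections and the zero map, is correct and fills in a step the paper leaves to the reader.
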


We remark that fundamental groups are required to be finite given $\Ric_2 > 0$ and dimension larger than two by Myers Theorem and that an $(n-k-1)$-connected inclusion as in Theorem \ref{thm:bodd-intro} arises in this context when two totally geodesic submanifolds intersect transversely in a manifold with $\Ric_2 > 0$. 
We postpone the definition of Four Periodicity until Section \ref{sec:Subquotient}. %, but we remark that the vanishing statements for odd Betti numbers in this corollary is a key step in proving positive Euler characteristic.

We also remark that {\it all even-dimensional manifolds known to admit positive sectional curvature} have $2$-, $4$-, or $8$-periodic rational cohomology away from the bottom and top degrees (see \cite{Eschenburg92Manuscripta}, \cite[Remark 4.7]{GoertschesKonstantisZoller20GKM},\cite{MareWillems13}, and \cite{AmannZiller16}), and this {\it periodicity is irreducible} in the sense mentioned above.\footnote{The 2-periodicity of the 6-dimensional space of Eschenburg is irreducible over the rationals, but reduces over the reals. All other known examples are irreducible in all coefficients.} 
In particular, the Wallach flag manifolds satisfy this property. 
By contrast, examples such as the Cheeger manifolds $\CP^n \# \CP^n$ and $\HP^n \# \HP^n$, which admit metrics of non-negative sectional curvature but are not expected to admit positive sectional curvature, respectively have $2$- and $4$-periodic cohomology away from the endpoints, but not irreducibly so. 
%in dimensions $6$, $12$, and $24$ have $2$Not all manifolds known to admit positive sectional curvature have periodic cohomology in the strict sense defined in \cite{Kennard13}. The symmetric spaces do, but the others in dimensions $6$, $12$, and $24$, for example, do not. On the other hand, these examples do have irreducibly periodic cohomology away from the bottom and top degrees with period $2$, $4$, and $8$, respectively. By contrast, Cheeger manifolds such as $\CP^n \# \CP^n$ and $\HP^n \# \HP^n$ are $2$- and $4$-periodic away from the bottom and top degrees but not irreducibly so, and these manifolds are not expected to admit positive sectional curvature. In general, there is no known obstruction to a simply connected, compact manifold admitting non-negative sectional curvature also admitting positive sectional curvature. It is tempting to ask whether irreducible periodicy might provide such an obstruction and resolve this long-standing question, but with so few data points, we refrain from doing so.

{\bf Organization}. 
Sections \ref{sec:Subquotient} - \ref{sec:Partial4PT} contain the proof of the extension of the Four Periodicity Theorem to the case of cohomology rings that are only partially periodic. 
Section \ref{sec:t8} contains the proof of Theorem \ref{thm:t8}, and Section \ref{sec:t9} contains the proof of Theorem \ref{thm:tcig}.

{\bf Acknowledgements}. The first author was partially supported by NSF Grant DMS-2402129, the Simons Foundation TSM program, and NSF Grant DMS-1928930 while in residence at SLMath in Berkeley, California, in Fall 2024. The second author was supported by NSF Award DMS-2202826 at Syracuse University and a summer stipend at Trinity University. The third author proved the main results of Sections \ref{sec:Subquotient} - \ref{sec:Partial4PT} in his M.Sc. thesis at WWU M\"unster and would like to thank his advisor Burkhard Wilking.

\section{The periodic subquotient}\label{sec:Subquotient}

In this section, we define a procedure for turning a partially periodic cohomology ring into a fully periodic subquotient of that cohomology ring. The additive, multiplicative, and Steenrod module structure are essentially the same, and working with this subquotient proves to be convenient in our proofs.

\begin{definition}[cf. Definition 2.3 in \cite{Nienhaus-pre}] 
For a commutative ring $\Lambda$, we say $x \in H^k(M; \Lambda)$ induces periodicity in degrees $1 \leq * \leq n - 1$ if $3k \leq n - 1$ and the maps $H^i(M; \Lambda) \to H^{i+k}(M; \Lambda)$ given by cup products with $x$ are surjective for $1 \leq i < n - 1 - k$ and injective for $1 < i \leq n - 1 - k$. We also say that $x \in H^k(M; \Lambda)$ induces periodicity on $H^*(M; \Lambda)$ if it has degree at most $n - 1$ and is a product of such elements. In either case, we say $H^*(M; \Lambda)$ is $k$-periodic on degrees $1 \leq * \leq n - 1$.
\end{definition}

% \begin{definition} (cf. \cite[Def. 2.3]{Nienhaus-pre})
%     We say an element $x \in H^k(M; \bZ_p)$ induces periodicity in degrees $1 \leq * \leq n - 1$ if either
%     \begin{itemize}
%         \item $k\le \frac{n-2}{2}$ and the maps $H^i(M; \bZ_p) \to H^{i+k}(M; \bZ_p)$ given by cup products with $x$ are surjective for $1 \leq i < n - 1 - k$ and injective for $1 < i \leq n - 1 - k$, or
%         \item $|x|\le n-1$ and $x$ is a product of elements inducing periodicity in degrees $1 \leq * \leq n - 1$. 
%     \end{itemize}
%     We say $H^*(M; \bZ_p)$ is $k$-periodic on degrees $1 \leq * \leq n - 1$ if there is such an element in degree $k$.
% \end{definition}

We remark on this definition compared to previous notions. First, the phrasing is closest to the that in \cite{Nienhaus-pre}, but the condition $2k \leq n$ has been replaced by $3k \leq n - 1$. To explain why this is reasonable, we note that periodicity from degree $0$ to $n$ required the assumption $2k \leq n$ so that all cohomology groups participate in the periodicity. In our setting, where the periodicity goes from degree $1$ to $n - 1$, the analogous condition would be $2k \leq n - 2$. However, we will see that the stronger condition $2k \leq n - 1 - k$, or equivalently $3k \leq n - 1$, is required to allow arguments from \cite{Kennard13,Nienhaus-pre} to carry over. 

We also remark that when $p$ is an odd prime, that $k$-periodicity of $H^*(M;\bZ_p)$ implies that $k$ is even or $M$ has vanishing $\bZ_p$-cohomology in degrees $2, \ldots, n-2$ since the square of an element $x \in H^k(M; \Z_p)$ is zero if $k$ is odd, which implies $x=0$ since $\cup x$ is injective in degree $k>1$. If $k=1$, one can also see that the claimed cohomology groups must vanish even if $x$ does not.

\begin{example}
If $N^{n-k} \to M^n$ is an $(n - k - 1)$-connected inclusion of closed, orientable manifolds with $3k \leq n - 1$, then the Periodicity Lemma from \cite{Wilking03} (see Theorem \ref{thm:PT} below) implies the existence of $x \in H^k(M)$ inducing $k$-periodicity on degrees $1 \leq * \leq n - 1$. 
\end{example}

To clean up activity at the endpoints, we fix an element $x \in H^k(M; \Z_p)$ inducing periodicity with $3k \leq n - 1$ and define the graded $\bZ_p$-vector space $\bar H^*$ with indices $1 \leq * \leq n - 1$ by the formula
	\[\bar H^i = \left\{
			\begin{array}{rcl}
			H^1(M;\bZ_p)/\ker(x)	&\mathrm{if}&	i = 1\\
				H^i(M; \bZ_p)	&\mathrm{if}&	1 < i < n - 1\\
				\im(x)			&\mathrm{if}&	i = n - 1
			\end{array}
			\right.\]
where $\ker(x)$ and $\im(x)$ denote the kernel and image, respectively, 
of the maps given by multiplication by $x$ in the appropriate degrees. 
We refer to $\bar H^*$ as a subquotient of $H^*(M; \bZ_p)$ since it is a quotient of a subspace of $H^*(M; \bZ_p)$. We remark that one could alternatively identify $\bar H^1$ with a choice of complement of $\ker(x) \subseteq H^1(M; \bZ_p)$ since we are working with coefficients in a field and therefore consider $\bar H^*$ as a subspace. It is immediate to see that multiplication by $x$ induces isomorphisms
	\begin{equation}\label{eqn:xi}
	    \xi:\bar H^i \to \bar H^{i+k}
        \end{equation}
for all $1 \leq i \leq n - k - 1$. 
Thus, we have inverse maps
	\[\xi^{-1}:\bar H^j \to \bar H^{j-k}\]
for all $1+k \leq j \leq n - 1$ and, in particular, for all $k + 1 \leq j \leq 2k + 1$.

\begin{proposition}
If $H^*(M; \bZ_p)$ is $k$-periodic, then its ring structure descends to $\bar H^*$. 
Similarly, if $pk \leq n - 1$, then $\bar H^*$ inherits a module structure over the Steenrod algebra.
\end{proposition}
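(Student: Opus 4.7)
The plan is to realize $\bar H^*$ as a quotient of the subspace
	\[V^* = H^1 \oplus H^2 \oplus \cdots \oplus H^{n-2} \oplus \im(x) \;\subseteq\; H^*(M;\Z_p),\]
where $\im(x) \subseteq H^{n-1}$, by the ideal $\ker(x) \subseteq H^1$, truncating products landing in degrees $\geq n$ to zero. For this to endow $\bar H^*$ with a ring structure inherited from $H^*(M;\Z_p)$, two checks are needed. First, $V^*$ must be closed under cup product in degrees $\leq n-1$: products $a \cdot b$ with $|a|+|b| = n-1$ must lie in $\im(x)$. This holds because $n-1 \geq 3k$ forces one factor, say $a$, to have degree $\geq k+1$, so periodicity provides $a = xd$, making $ab = x(db) \in \im(x)$. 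Second, $\ker(x) \subseteq H^1$ must annihilate $V^*$ within our degree range: given $a \in \ker(x)|_{H^1}$ and $b \in H^j$, I argue in two regimes. If $j \leq n-k-2$, then $x$ is injective on $H^{1+j}$ by periodicity, and $x \cdot ab = (xa)b = 0$ forces $ab = 0$. Otherwise $j \geq n-k-1 \geq k+1$, so periodicity gives $b = xc$, and then $ab = a \cdot (xc) = \pm (xa) \cdot c = 0$ directly.

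For the Steenrod module structure under $pk \leq n-1$, the crucial preliminary lemma is that $\im(x)$ is closed under Steenrod operations, which by the Cartan formula reduces to verifying $\theta(x) \in \im(x)$ for every nonzero Steenrod operation $\theta$ applied to $x$. For $p = 2$, the candidates are $\Sq^l(x) \in H^{k+l}$ for $0 \leq l \leq k$: those with $l < k$ lie in $\im(x)$ because $x$ is surjective onto $H^{k+l}$, while the top operation $\Sq^k(x) = x^2$ is trivially in $\im(x)$. For odd $p$, intermediate operations $\P^l(x)$ and $\beta(x)$ lie in $\im(x)$ by surjectivity of $x$, while the top operation $\P^{k/2}(x) = x^p$ (in degree $pk$) is trivially in $\im(x)$; the hypothesis $pk \leq n-1$ is precisely what keeps this top operation within the range of $\bar H^*$. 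With $\im(x)$ closed under Steenrod operations, Cartan gives $\theta(xc) \in \im(x)$ for any $c$; combined with the unstable axiom $\theta(a) = 0$ for $|\theta| > |a|$, this handles all Steenrod operations of elements of $\bar H^*$ whose image would land in $\bar H^{n-1} = \im(x)$.

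The remaining verification is that Steenrod operations vanish on $\ker(x)|_{H^1}$ inside $\bar H^*$. For $a \in \ker(x)|_{H^1}$, the only nontrivial operations are $\Sq^1(a) = a^2$ for $p = 2$ and $\beta(a)$ for odd $p$. The first vanishes via $x \cdot a^2 = a \cdot (xa) = 0$ together with injectivity of $x$ on $H^2$. The second, and most delicate step, uses periodicity to factor $\beta(x) = xb$ for some $b \in H^1$, yielding $\beta(x) \cdot a = xb \cdot a = \pm x(ab) = \pm (xa) b = 0$; the Leibniz rule $\beta(xa) = \beta(x) a + (-1)^k x \beta(a) = 0$ then gives $x \beta(a) = 0$, and injectivity of $x$ on $H^2$ yields $\beta(a) = 0$.

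The main obstacle throughout is bookkeeping the degree constraints: every invocation of injectivity or surjectivity of $x$ must lie within the periodicity ranges $1 \leq i < n-1-k$ and $1 < i \leq n-1-k$, and the hypotheses $3k \leq n-1$ (for the ring structure) and $pk \leq n-1$ (for the Steenrod case) are exactly what guarantee all such invocations remain valid.
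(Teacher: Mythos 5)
Your argument for the ring structure is correct and essentially the same as the paper's, phrased as passing to the quotient of the subspace $V^*$ rather than checking degrees case by case. The $p = 2$ Steenrod argument is also fine, and your explicit treatment of the Bockstein on $\ker(x)|_{H^1}$ via the Leibniz rule is a careful addition that the paper's proof does not address (it only verifies the $\P^i$).

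However, there is a gap in the odd $p$ case, precisely in the sentence claiming that the unstable axiom together with the factorization $u = xc$ ``handles all Steenrod operations of elements of $\bar H^*$ whose image would land in $\bar H^{n-1}$.'' The hypothesis $pk \leq n-1$ permits the boundary case $pk = n-1$, and there an operation can land in degree $n-1$ without either mechanism applying. Take $u \in \bar H^k$ and the top power $\P^{k/2}(u) = u^p$. Its degree is $k + 2(p-1)\cdot\tfrac{k}{2} = pk = n-1$, so you must show $u^p \in \im(x)$. But $2\cdot\tfrac{k}{2} = k = |u|$, so the unstable axiom gives no vanishing, and $|u| = k$ is not strictly greater than $k$, so $u$ need not factor as $xc$. (For $p = 2$ this situation cannot occur since $j + i = n - 1$ with $i \leq j \leq k$ would force $n - 1 \leq 2k < 3k \leq n - 1$, a contradiction; for odd $p$ the inequality $j + 2(p-1)i \leq pk$ leaves $pk = n-1$ as an honest exception.) The fix is the one the paper uses: write $\P^{k/2}(u) = u^p = u^2\cdot u^{p-2}$, observe $|u^2| = 2k > k$ so periodicity gives $u^2 = xu'$, and hence $u^p = x(u'u^{p-2}) \in \im(x)$. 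Your proof should include this step explicitly rather than appeal to the unstable axiom, which does not cover it.
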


\begin{proof}
By definition, there exists $x \in H^k(M; \bZ_p)$ inducing periodicity with $3k \leq n - 1$. For the proof, we extend the definition of $\bar H^*$ to all degrees $0 \leq * \leq n$ by setting $\bar H^0$ to be the zero-dimensional subspace of $H^0(M; \Z_p)$ and $\bar H^n$ to be the zero-dimensional quotient of $H^n(M; \Z_p)$.

Addition in $H^*(M; \Z_p)$ clearly induces a unique additive structure on $\bar H^*$. For example, if $y, z \in \bar H^{n-1}$, then $y = x y'$ and $z = x z'$ for some $y', z' \in \bar H^{n-1-k}$, and hence $y + z$ equals $x (y' + z')$ and is in $\bar H^{n-1}$.

To see that multiplication descends to the subquotient, fix $a \in \bar H^i$ and $b \in \bar H^j$ with $i \leq j$. If $i, j, i+j \not\in\{0,1,n-1,n\}$, the product $ab \in \bar H^{i+j}$ is clearly well defined. If $i = 0$, then $a = 0$ and hence $ab = 0 \in \bar H^j$. If $i = 1$, we need to show that $ax = 0$ implies $ab = 0$. For $j \leq n - 1 - k$, this holds because $(ab)x = 0$ and multiplication by $x$ is injective into degree $i + j + k$. For $n - k \leq n - 1$, this holds because $b$ factors as $xb'$ and hence $ab = (ax)b' = 0$. If $i, j > 1$ and $i + j = n - 1$, then $j \geq k + 1$ since $2k \leq n - 2$ and $i \leq j$, so $a b = a(xb') \in \im(x) = \bar H^{n-1}$. Finally if $i, j > 1$ and $i + j = n$, then $a b$ may be non-zero in $H^n(M;\bZ_p)$ but then in the quotient $\bar H^n$ it is zero and hence is well defined.

It suffices to prove that $\bar H^*$ inherits an action by the Steenrod algebra when $pk \leq n - 1$. First assume $p = 2$. We claim for any $i \geq 0$ that
    \[\Sq^i:H^j(M; \Z_p) \to H^{j+i}(M; \Z_p)\]
induces a well defined homomorphism $\Sq^i:\bar H^j \to \bar H^{j+i}$. The claim holds 
for $i = 0$ since $\Sq^0$ is the identity, 
for $i = j = 1$ since $a \in H^1(M; \bZ_p)$ such that $ax = 0$ implies $\Sq^1(a)x = a^2x = 0$ and hence that $a^2 = 0$ by periodicity, and 
for $i > j$ since $\Sq^i = 0$ on $H^j(M; \Z_p)$ in this case. 
We may therefore assume $1 \leq i \leq j$ and $j \neq 1$. 
If $i + j < n - 1$, then there is nothing to show since $\bar H^j = H^j(M; \bZ_p)$ and $\bar H^{i+j} = H^{i+j}(M; \Z_p)$. Similarly if $i + j = n$, then $\bar H^{i+j} = 0$ and there is nothing to show. If $i + j = n - 1$, then $j \geq \tfrac{n-1}{2} > k$, so any $u \in \bar H^j$ factors as $x u'$. By the Cartan formula, we have
	\[\Sq^i(u) = \Sq^0(x) \Sq^i(u') + \sum_{1 \leq h \leq i} \Sq^h(x) \Sq^{i-h}(u').\]
For $1 \leq h \leq i$, we have $1 +k \leq |\Sq^h(x)| < n - 1$, so $\Sq^h x$ factors as $x x_h$ for some $x_h \in \bar H^h$. Since also $\Sq^0 x = x$, the right-hand side is in $\im(x) = \bar H^{n-1}$, as needed.

Finally, we claim that $\bar H^*$ inherits an action by the Steenrod algebra when $p \geq 3$ and $pk \leq n -1$. 
We fix $i \geq 0$ and claim that $\P^i:H^j(M;\bZ_p) \to H^{j + 2(p-1)i}(M;\bZ_p)$ induces a well defined homomorphism $\P^i:\bar H^j \to \bar H^{j + 2(p-1)i}$ for all $j$. 
As in the case where $p = 2$, the claim holds trivially if $i = 0$ or $2i > j$, as the map $\P^i$ is either the identity or the zero map. 
Similarly, the claim holds trivially if $2(p-1)i + j \neq n - 1$, since the left-hand side of this inequality is the degree of $\P^i(u)$. 
It suffices to consider the case where $i \geq 1$, $2i \leq j$, and $2(p-1)i + j = n - 1$. 
If moreover $j > k$, then any $u \in \bar H^j$ factors as $x u'$ and the claim again follows by an argument using the Cartan formula. 
If instead $j \leq k$, then the assumption $pk \leq n - 1$ implies that $2i = j = k$. 
In particular, $\P^i(u) = u^p = u^2 u^{p-2}$. 
Since $|u| = k$ and $p \geq 3$, we may factor $u^2$ as $x u'$ and conclude that $\P^i(u) \in \im(x) = \bar H^{n-1}$, as required.
\end{proof}

%\newpage	% \newpage for editing purposes
\section{Irreducible periodicity}\label{sec:IrreduciblePeriodicity}

In this section, we define a special case of periodicity called irreducible periodicity, and we prove that many properties of (fully) periodic cohomology rings also hold in the case of irreducible (partial) periodicity. In particular, we prove versions of the $\Z_p$-Periodicity Theorems in \cite{Kennard13} in the case of irreducible partial periodicity.

\begin{definition}
The periodic subquotient $\bar H^*$ is irreducibly periodic via $x$ if $x$ induces periodicity and has the property that $x = a + b$ only if $a$ or $b$ also induces periodicity.
%he sum of any two elements not inducing periodicity also does not induce periodicity.
%  via some $x \in \bar H^k$ and moreover, if $x = a + b$ with $a, b \in \bar H^k$, then $a$ or $b$ also induces periodicity.
\end{definition}

\begin{example}
For a connected $n$-manifold $M^n$, if $2k \leq n$ and there exists a non-zero $x \in H^k(M)$ inducing periodicity from degree $0$ to $n$ in the sense of \cite{Kennard13,Nienhaus-pre}, then $H^k(M;\Z_p)$ is one-dimensional because $H^0(M;\bZ_p)$ surjects onto it. In particular, if $x$ is a sum of the form $a + b$, then at least one of those summands is a non-zero multiple of $x$ and hence induces periodicity. In that sense, all periodic cohomology rings analyzed in \cite{Kennard13} were automatically irreducibly periodic. 
\end{example}

Another way to phrase the definition of irreducibility is that 
	\[N^k = \{y \in \bar H^k \mid y \mathrm{~does~not~induce~periodicity}\}\]
is a vector subspace of $\bar H^k$. We note that inhomogeneous elements in $\bar H^*$ do not induce periodicity, so more generally the subset $N$ of $\bar H^*$ of elements that do not induce periodicity is a vector subspace of $\bar H^*$. 

The next two results we need are that moreover $N$ is an ideal (Lemma \ref{lem:Products+Factors} below) and a submodule under the action of the Steenrod algebra in special circumstances (see Lemma \ref{lem:partialP^i-module} below). We note however that the proof of Lemma \ref{lem:Products+Factors} does not require irreducibility.

\begin{lemma}[Inheritance by products and factors]\label{lem:Products+Factors}
If $H^*$ is irreducibly periodic and $y, z \in \bar H^*$ are homogeneous elements with $|yz| \leq n-1$, then $yz$ induces periodicity if and only if $y$ and $z$ do. As a consequence, the minimal period divides any other period.
\end{lemma}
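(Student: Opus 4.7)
The plan is to prove both directions of the biconditional separately, then derive the consequence on minimal periods by iteration.

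The $(\Leftarrow)$ direction is immediate from the definition: if $y$ and $z$ both induce periodicity, then each is by construction a product of elements satisfying condition $(1)$, so $yz$ is such a product as well, and combined with the hypothesis $|yz|\le n-1$ this means $yz$ induces periodicity.

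For the $(\Rightarrow)$ direction, set $a=|y|$ and $b=|z|$. Composing the surjectivity/injectivity of the $(1)$-type factors of $yz$, the map $\mu_{yz}\colon\bar H^i\to\bar H^{i+a+b}$ is surjective for $1\le i<n-1-(a+b)$ and injective for $1<i\le n-1-(a+b)$, whether $yz$ itself satisfies $(1)$ or is only a product of such elements. Factoring $\mu_{yz}=\mu_y\circ\mu_z=\pm\mu_z\circ\mu_y$ and noting that $\mu_z(v)=0$ forces $\mu_{yz}(v)=0$, the injectivity of $\mu_{yz}$ descends to injectivity of $\mu_y$ on $(1,n-1-(a+b)]$; the surjectivity of $\mu_{yz}$, via $\bar H^{i+a+b}=\mu_y(\mu_z\bar H^i)\subseteq\mu_y\bar H^{i+b}$, forces $\mu_y$ to be surjective on $\bar H^j$ for $j\in[1+b,n-1-a)$, and symmetric statements hold for $\mu_z$. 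To extend these partial ranges to the full $[1,n-1-a)$ for surjectivity and $(1,n-1-a]$ for injectivity, I would invoke the commutative diagram $\mu_x\circ\mu_y=\pm\mu_y\circ\mu_x$ together with the fact that $\xi=\mu_x\colon\bar H^j\to\bar H^{j+k}$ is an isomorphism on $[1,n-1-k]$: these show that injectivity and surjectivity of $\mu_y$ at degree $j$ depend only on $j\bmod k$ within the valid range, and moreover that $\dim\bar H^j=\dim\bar H^{j+k}$. Since $a+b\ge k$ (because $k$ is the minimal period of $\bar H^*$), a careful residue-class analysis then shows that the known sur and inj ranges, together with their $k$-translates, exhaust every residue class modulo $k$; the dimension count converts sur to inj and vice versa where the two residue coverages overlap, yielding that $\mu_y$ is an isomorphism on the full range. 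This gives $y$ the $(1)$-type conditions, and in the degenerate case $3a>n-1$, I would further express $y$ as a product of $(1)$-type elements by peeling off a power of $x$ via $\xi^{-1}$. Thus $y$ induces periodicity, and the argument for $z$ is symmetric.

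For the consequence, suppose $w\in\bar H^d$ induces periodicity with $d>k$. The iso $\xi\colon\bar H^{d-k}\to\bar H^d$ writes $w=x\cdot w'$ for a unique $w'\in\bar H^{d-k}$, and by the main direction applied to the factorization $x\cdot w'$, the element $w'$ also induces periodicity. Iterating, $w=x^m\cdot w^{(m)}$ with each $w^{(m)}$ periodicity-inducing of degree $d-mk$; the process terminates once $d-mk\in[1,k]$, and minimality of $k$ then forces $d-mk=k$, so that $d=(m+1)k$ is a multiple of $k$.

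The main technical obstacle is verifying that the residue-class/translation step in the $(\Rightarrow)$ direction truly exhausts all $k$-classes. This is straightforward when $3(a+b)\le n-1$, since each known sur/inj range already has length $\ge k$; but in the degenerate regime $3(a+b)>n-1$ the ranges can be shorter than $k$, and one must combine the sur and inj translations carefully with the dimension identity $\dim\bar H^j=\dim\bar H^{j+k}$ to cover the remaining residues.
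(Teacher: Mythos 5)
Your proposal takes a genuinely different route from the paper's proof, and it has a real gap.

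The paper's proof proceeds by a case analysis on the degree $|yz|$ together with a decreasing induction. Case~1 handles $|yz| \le \tfrac{n-1}{3}$ by a direct composition argument (close to your residue idea). Case~2 handles $yz = uv$ with one factor of degree at most $\tfrac{n-2}{3}$, by writing $y = uy'$ (when $|y| > |u|$) to reduce $|yz|$ and invoking the inductive hypothesis, or by a direct argument when $|y|,|z| \le |u|$. Case~3, the boundary situation where $yz = uv$ with $|y|=|z|=|u|=|v|=\tfrac{n-1}{3}$, uses irreducibility essentially: the decompositions $u = y + (u-y)$ and $v = -z + (v+z)$ together with irreducibility force $u-y$ and $v+z$ to induce periodicity, from which a contradiction is drawn if neither $y$ nor $z$ does. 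This is the one place where the hypothesis of irreducibility earns its keep, as the authors note in the remark that without irreducibility the proof only covers the case $3k \le n-2$.

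Your proof never invokes irreducibility at all, so at minimum it should raise a flag: either you are claiming a strictly stronger result than the paper's (which is possible, but then you would need to say why the remark after the lemma is overly cautious), or something is missing. The concrete gap is that the residue-class bootstrap needs a nonempty starting range, and for $|yz|$ close to $n-1$ the ranges you begin with are empty. For example, if $|yz| = n-1$ (which is allowed: $yz$ can be a product of three factors of degree $\le \tfrac{n-1}{3}$), then $n - 1 - (a+b) = 0$, so your injectivity range $(1, n-1-(a+b)]$ and surjectivity range $[1+b, n-1-a)$ are both empty, and the $k$-translation argument has nothing to propagate. The paper avoids this precisely by factoring $y = u y'$ (using the periodicity induced by $u$) to pass to the strictly smaller instance $y'z = v$; this is an induction on $|yz|$ that your proof lacks. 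Your single remark about ``peeling off a power of $x$ via $\xi^{-1}$'' applies to $y$ alone after $\mu_y$ has been shown to be an isomorphism in the right degrees, but it does not help you establish that isomorphism in the first place when the starting ranges vanish. The closing paragraph of your proposal acknowledges the obstacle in the regime $3(a+b) > n-1$ but does not actually resolve it; the dimension identity $\dim \bar H^j = \dim \bar H^{j+k}$ converts surjectivity at a degree to injectivity at that degree only when $a \equiv 0 \pmod k$, and that congruence is part of what the ``consequence'' is supposed to establish, so one must be careful not to argue in a circle and, more to the point, one still needs at least one degree at which something is known. To repair the argument you would need to incorporate a reduction on $|yz|$, at which point you are essentially reproducing the paper's Cases~2 and~3.

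The derivation of the consequence at the end of your proposal (iterating $w = x w'$ via the main direction) is correct and matches what the paper does implicitly.
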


The proof is based on the fact that multiplication by $yz$ factors as multiplication by $y$ followed by multiplication by $z$ and that the latter two maps commute up to sign. However the proof is more involved than for periodicity from $0$ to $n$, so we include in for completeness.

\begin{proof}
If $y$ and $z$ induce periodicity, then each is equal to or a product of elements of degree at most $\tfrac{n-1}{3}$ that induce periodicity. In either case, $yz$ is clearly a product of such elements and hence induces periodicity by definition.

For the converse, suppose $yz$ induces periodicity. Recall there are two possibilities in the definition of periodicity. For this proof, we break these into Cases 1, 2, and 3 below.
 %By graded commutativity, it suffices to show that $y$ induces periodicity.

{\bf Case 1}. $yz$ has degree at most $\tfrac{n-1}{3}$. % works with $\leq \tfrac{n-1}{2}$.$

This works as in \cite{Kennard13} and \cite{Nienhaus-pre}, but we give the details here. Consider the composition of the maps given by multiplication by $y$, $z$, and then $y$ again. 
The compositions of the first two and of the last two are isomorphisms as long as their domains and codomains remain within degrees from $1$ to $n-1$. 
Thus multiplication by $y$ induces maps $\bar H^i \to \bar H^{i+|y|}$ that are isomorphisms for $1 \leq i \leq n - 1 - |yz| - |y|$ and for $|yz|+1 \leq i \leq n - 1 - |y|$. 
%Similarly, considering the composition of multiplication by $z$, by $y$, and then by $z$, we see that multiplication by $y$ induces isomorphisms $\bar H^i \to \bar H^{i+|y|}$ for all $|z|+1 \leq i \leq n - 1 - |yz|$. 
By the assumption in Case 1, these degree ranges cover all possible degrees, so $y$ induces periodicity. 
A symmetric argument works for $z$, so the proof is complete in Case 1.

{\bf Case 2}: $yz$ is a non-trivial product of elements of degree at most $\tfrac{n-1}{3}$ that induce periodicity, and moreover one of those elements has degree at most $\tfrac{n-2}{3}$.

By isolating one factor of minimal degree and grouping the others into one term, we may assume $yz = uv$ where $u$ and $v$ induce periodicity, $0 < |u| \leq |v|$, and $|u| \leq \tfrac{n-2}{3}$.

If $|y| > |u|$, we can use periodicity to write $y = uy'$ and obtain the equation $y' z = v$. By induction over $|yz|$ and the fact that $v$ induces periodicity, we conclude that $y'$ and $z$ induce periodicity. By the first part of the lemma, $y$ also induces periodicity.

If $|y| \leq |u|$ and moreover $|z| \leq |u|$ without loss of generality, we first look at the composition of multiplication by $y$ followed by multiplication by $z$. Since $yz$ induces periodicity, we see that multiplication by $y$ induces injections $\bar H^i \to \bar H^{i+|y|}$ for $1 \leq i \leq (n-1) - |y| - |z|$. Working inductively over $i$, we then look at the two possible compositions of multiplication by $y$ and multiplication by $u$, which is always an isomorphism, to conclude that multiplication by $y$ induces injections $\bar H^i \to \bar H^{i + |y|}$ for all $1+|u| < i \leq (n-1) - |y|$. Since $|y|$, $|z|$, and $|u|$ are at most $\tfrac{n-2}{3}$, we see that multiplication by $y$ induces injections in all required degrees. A similar proof shows surjectivity, so the proof is complete.

{\bf Case 3}: $yz$ is a non-trivial product of elements of degree $\tfrac{n-1}{3}$ that induce periodicity.

As in Case 2, we may assume by induction that $|y| \leq \tfrac{n-1}{3}$ and $|z| \leq \tfrac{n-1}{3}$. Hence we may assume $yz = uv$ where $u$ and $v$ induce periodicity and
    \[|y| = |z| = |u| = |v| = \tfrac{n-1}{3}.\]
If $y$ or $z$ induces periodicity, then an argument similar to that in Case 1 and using the bounds $|y|, |z| \leq \tfrac{n-1}{3}$ shows that both $y$ and $z$ induce periodicity as well. 

If neither $y$ nor $z$ induces periodicity, we derive a contradiction as follows. By irreducibility, the equations $u = y + (u-y)$ and $v = -z + (v+z)$ imply that both $u - y$ and $v + z$ induce periodicity. By the first part of the lemma, the product $(u - y) (v + z)$ induces periodicity. This product equals $y v - z u$, so by irreducibility again $y v$ or $z u$ induces periodicity. By the argument in the last paragraph, we see that $y$ or $z$ induces periodicity, a contradiction.
\end{proof}

\begin{remark}
    As the proof shows, Lemma \ref{lem:Products+Factors} remains true without assumptions of irreducibility if the minimal period is assumed to satisfy $3k\le n - 2$ instead of the condition $3k \leq n - 1$ stated in the definition of periodicity. We prove the stronger version to simplify some proofs later on.
\end{remark}

Before proving the submodule property, we note the following useful consequence of Lemma \ref{lem:Products+Factors}.

\begin{corollary}\label{lem:AllPeriodsIrreducible}
If $\bar H^*$ is irreducibly periodic, then any element inducing periodicity does so irreducibly.
\end{corollary}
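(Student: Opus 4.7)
The plan is to reframe irreducible periodicity as a linear-subspace condition in each degree, and then to propagate this condition across degrees via multiplication-by-periodic-element isomorphisms on the subquotient.

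For each $1 \leq i \leq n-1$, let $N^i = \{u \in \bar H^i \mid u \text{ does not induce periodicity}\}$. As already observed in this section, the hypothesis that $\bar H^*$ is irreducibly periodic via $x \in \bar H^k$ is precisely the statement that $N^k$ is a vector subspace of $\bar H^k$. Analogously, the conclusion of the corollary for an element $y \in \bar H^j$ inducing periodicity is the statement that $N^j$ is a vector subspace of $\bar H^j$, where $j = |y|$.

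The key step I would prove is: for any $w \in \bar H^d$ that induces periodicity and any $1 \leq i \leq n-1-d$, multiplication by $w$ gives a linear isomorphism $\bar H^i \to \bar H^{i+d}$ that restricts to a bijection $N^i \to N^{i+d}$. The linear isomorphism comes from the surjectivity and injectivity conditions built into periodicity of $w$, combined with the definitions of $\bar H^1 = H^1/\ker(x)$ and $\bar H^{n-1} = \im(x)$. The $N$-compatibility is immediate from Lemma \ref{lem:Products+Factors}: since $w$ is periodic, $wu$ is periodic if and only if $u$ is, so $wu \in N^{i+d}$ if and only if $u \in N^i$.

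Applying this observation twice completes the proof. Taking $w = y$ and $i = k$ yields a bijection $N^k \cong N^{k+j}$, so $N^{k+j}$ is a subspace of $\bar H^{k+j}$; then taking $w = x$ and $i = j$ yields a bijection $N^j \cong N^{k+j}$, so $N^j$ is itself a subspace of $\bar H^j$. If $y = a + b$ with both $a, b \in N^j$, then $y \in N^j$, contradicting periodicity of $y$, so at least one of $a, b$ must induce periodicity. The main technical point to verify is the isomorphism $\bar H^i \to \bar H^{i+d}$ at the boundary values $i = 1$ or $i + d = n - 1$. The bound $3k, 3j \leq n - 1$ forces $k + j \leq \tfrac{2(n-1)}{3} < n - 1$, so the upper boundary is never reached in the two applications above, and the case $i = 1$ reduces to verifying that $w \cdot$ descends to $H^1/\ker(x)$, which follows from the same commutativity-plus-injectivity argument used in the earlier proof that the ring structure descends to $\bar H^*$.
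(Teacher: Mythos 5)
Your reframing of irreducibility at degree $d$ as the condition that $N^d = \{u \in \bar H^d : u \text{ does not induce periodicity}\}$ is a vector subspace is exactly the observation the paper makes just after the definition, and your idea of propagating that property across degrees via the multiplication isomorphisms $\cdot x$ and $\cdot y$ (which send $N$-classes to $N$-classes by Lemma~\ref{lem:Products+Factors}) is a genuinely different and, in spirit, cleaner argument than the paper's minimal-counterexample descent followed by the special $x^2 = yz$ computation in the base case $|y| \leq k$.

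However, there is a real gap. Your argument relies on the bound $3j \leq n - 1$, which you invoke explicitly to guarantee $k + j < n - 1$ so that $\cdot y : \bar H^k \to \bar H^{k+j}$ is a well-defined isomorphism. But the definition of ``induces periodicity'' also covers \emph{products} of such elements of total degree up to $n-1$, and these are exactly the elements to which the corollary is applied later in the paper: $x^2$ in the $\bZ_2$-theorem, $x^p$ in the $\bZ_p$-theorem, and $P^{\iota(x)}(yz)$ in the additivity lemma. For such $y$ one can have $j = |y|$ well beyond $\tfrac{n-1}{3}$, and then $k + j$ may exceed $n - 1$, in which case $\bar H^{k+j}$ does not exist and the first of your two bijections has no target. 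The second bijection $\cdot x : \bar H^j \to \bar H^{j+k}$ has the same problem when $j + k > n - 1$.

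The fix is a descent, and it is essentially the first half of the paper's own proof: if $|y| > k$, use periodicity of $x$ to factor $y = xz$ with $|z| = |y| - k$; Lemma~\ref{lem:Products+Factors} shows $z$ induces periodicity and that $\cdot x : N^{|z|} \to N^{|y|}$ is a bijection, so $N^{|y|}$ is a subspace whenever $N^{|z|}$ is. Iterating reduces to $|y| \leq k$, where $3|y| \leq 3k \leq n-1$ and your two-bijection argument applies verbatim and, indeed, replaces the paper's somewhat ad hoc $x^2 = yz$ base-case computation with something more transparent. So the approach is salvageable and arguably nicer in the base case, but as written it only proves the statement for elements of degree at most $\tfrac{n-1}{3}$, which is not enough for the uses downstream.
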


\begin{proof}
% Prove it for $y = x^2$
% Prove it for $|y| \leq k$
% Prove it for $|y| > k$ by induction.
Assume $x \in H^k(M)$ induces periodicity and has the property that $x = x_1 + x_2$ only if $x_1$ or $x_2$ also induces periodicity. 

If the lemma does not hold, then there exists $y \in H^l(M)$ that induces periodicity and there exists a decomposition $y = y_1 + y_2$ such that neither $y_1$ nor $y_2$ induces periodicity. Assume $y$ is such an element of minimal degree.

First, if $|y| \geq k + 1$, then periodicity via $x$ implies a factorization $y = x z$. Since also $|y_i| \geq k + 1$, there exist factorizations $y_i = x z_i$. Periodicity via $x$ then implies that $z = z_1 + z_2$. By Lemma \ref{lem:Products+Factors}, we have that $z$ induces periodicity but that neither $z_i$ induces periodicity. Hence we have a contradiction to the minimality of $|y|$.

We may assume $|y| \leq k$. Periodicity via $y$ implies that $x^2 = y z$ for some $z \in \bar H^*$. The decomposition of $y$ implies $x^2 = y_1 z + y_2 z$. Now periodicity via $x$ implies $y_i z = x w_i$ and that $x = w_1 + w_2$. Since $x$ induces periodicity irreducibly, some $w_i$ induces periodicity. Lemma \ref{lem:Products+Factors} then implies that $y_i z$ induces periodicity for some $i$ and hence that $y_i$ induces periodicity for some $i$, a contradiction to our choice of $y$.
\end{proof}

Before proving that the ideal $N$ is also a submodule under the action of the Steenrod algebra, we recall the $\iota$ functional introduced in \cite{NienhausMSc}.

\begin{definition}[$\iota$ functional]
For $y \in \bar H^*$ such that $P^i(y)$ induces periodicity for some $i \geq 0$, the integer $\iota(y)$ is defined to be the minimum such $i$.
\end{definition}

The following result is similar to \cite[Lemma 2.11]{Nienhaus-pre} and is a consequence of the Cartan formula, Lemma \ref{lem:Products+Factors}, and the property of $k$-periodicity that $\dim(\bar H^k) = 1$. The last property is the only one we do not have here, but the condition of {\it irreducible} periodicity serves as a substitute. The proof therefore extends to any irreducibly $k$-periodic cohomology ring, and we include the proof for completeness.

\begin{lemma}[Additivity of $\iota$]\label{lem:Additivity}
Assume $\bar H^*$ is irreducibly $k$-periodic, and assume $x, y, z \in \bar H^*$ are homogeneous elements such that $x = y z$. If
	\begin{enumerate}
	\item $\iota(x)$ is defined and $P^{\iota(x)}(x)$ has degree at most $n-1$, or 
	\item $\iota(y)$ and $\iota(z)$ are defined and $P^{\iota(y)}(y) P^{\iota(z)}(z)$ has degree at most $n-1$,
	\end{enumerate}
then all three are defined and $\iota(x) = \iota(y) + \iota(z)$. %and $P^{\iota(x)}(x) = P^{\iota(y)}(y) P^{\iota(z)}(z)$.	
\end{lemma}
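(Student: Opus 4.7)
The plan is to combine the Cartan formula with the fact that, under irreducible periodicity, the subset $N \subseteq \bar H^*$ of elements that do not induce periodicity forms a graded ideal. By Corollary \ref{lem:AllPeriodsIrreducible}, every element inducing periodicity does so irreducibly, so the reformulation of irreducibility recorded immediately after Definition 2.5 shows that each graded piece $N^\ell$ is a vector subspace of $\bar H^\ell$. Lemma \ref{lem:Products+Factors} then yields that any product containing a factor in $N$ remains in $N$, since otherwise both factors would have to induce periodicity.

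I would first deduce case (2) from case (1). Applying the Cartan formula to $x = yz$, write
\[
P^{\iota(y)+\iota(z)}(x) \;=\; P^{\iota(y)}(y)\,P^{\iota(z)}(z) \;+\; \sum_{\substack{a+b \,=\, \iota(y)+\iota(z) \\ a < \iota(y)\text{ or } b < \iota(z)}} P^a(y)\,P^b(z).
\]
The main term induces periodicity by Lemma \ref{lem:Products+Factors}, since both factors induce periodicity by definition of $\iota$ and their product has degree at most $n-1$ by hypothesis. Every remaining summand contains a factor in $N$ and therefore lies in $N$. Because $N$ is a subspace, the left-hand side cannot belong to $N$, so $P^{\iota(y)+\iota(z)}(x)$ induces periodicity. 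This shows $\iota(x)$ is defined with $\iota(x) \leq \iota(y) + \iota(z)$, and the degree of $P^{\iota(x)}(x)$ is automatically at most $n-1$, placing us in case (1).

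In case (1), set $i = \iota(x)$ and expand
\[
P^i(x) \;=\; \sum_{a+b=i} P^a(y)\,P^b(z).
\]
The left-hand side lies outside $N$; the subspace property then forces some summand on the right to lie outside $N$ as well, and Lemma \ref{lem:Products+Factors} (applicable since the product has degree $|P^i(x)| \leq n-1$) yields that both factors of that summand induce periodicity. Hence $\iota(y) + \iota(z) \leq a + b = i$, and the argument of the previous paragraph now applies because $|P^{\iota(y)}(y)P^{\iota(z)}(z)| = |P^{\iota(y)+\iota(z)}(x)| \leq |P^{\iota(x)}(x)| \leq n-1$, giving the reverse inequality and hence equality.

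The step I expect to require the most care is the degree bookkeeping: one must check that each Cartan summand $P^a(y)\,P^b(z)$ is a legitimate product in $\bar H^*$ at the degree in question, and that Lemma \ref{lem:Products+Factors} genuinely applies to the main term in the displayed Cartan expansions. Both points hinge on the degree hypotheses of cases (1) and (2) together with the assumption $pk \leq n - 1$ used earlier to equip $\bar H^*$ with the structure of a module over the Steenrod algebra, so no new curvature or cohomological input is needed beyond what has already been established.
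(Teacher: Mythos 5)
Your proof is correct and follows essentially the same route as the paper's: apply the Cartan formula to $P^{j}(yz)$ at $j=\iota(x)$ and at $j=\iota(y)+\iota(z)$, and use irreducibility (which you rephrase as $N$ being a subspace and an ideal, relying on Corollary~\ref{lem:AllPeriodsIrreducible} and Lemma~\ref{lem:Products+Factors}) to conclude that exactly one Cartan summand can induce periodicity. The only differences are cosmetic --- you recast the irreducibility step in terms of the ideal $N$ rather than invoking the splitting property directly, and you reorder the cases by first reducing (2) to (1) --- but the degree bookkeeping and the two half-inequalities are the same as in the paper.
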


We note that the equation $P^{\iota(x)}(x) = P^{\iota(y)}(y) P^{\iota(z)}(z)$ in \cite[Lemma 2.10]{Nienhaus-pre} does not extend to our setting. While we do not need this fact here, we note that the proof below shows that this equality does hold modulo the ideal $N$.

\begin{proof}
    If $\iota(x)$ is defined and $x = yz$, then $P^{\iota(x)}(yz)$ induces periodicity and moreover irreducibly so by Lemma \ref{lem:AllPeriodsIrreducible}. At the same time, we have
    \[
        P^{\iota(x)}(yz) = \sum_{i + j = \iota(x)} P^i(y) P^j(z)
    \]
    by the Cartan formula, so $P^i(y) P^j(z)$ induces periodicity for some $i + j = \iota(x)$. By the Products and Factors Lemma (Lemma \ref{lem:Products+Factors}), so does $P^i(y)$ and $P^j(z)$, so by definition of the $\iota$ functional, we have $\iota(y) + \iota(z) \leq \iota(x)$. To prove equality, we could run the same argument with $\iota(x)$ replaced by $\iota(y) + \iota(z)$. Since the term $P^{\iota(y)}(y) P^{\iota(z)}(z)$ induces periodicity, by irreducible periodicity so does either $P^{\iota(y) + \iota(z)}(x)$ or some other term of the form $P^i(y) P^j(z)$ for some $i + j = \iota(y) + \iota(z)$ with $(i, j) \neq (\iota(y), \iota(z))$. In either case, we have a contradiction to either the definition of $\iota(x)$ or the Products and Factors Lemma.

    If $\iota(y)$ and $\iota(z)$ are defined, then we look again at $P^{\iota(y) + \iota(z)}(yz)$ and argue similarly to conclude that $\iota(x) \leq \iota(x) + \iota(y)$. Finally equality holds by again considering the expression for $P^{\iota(x)}(yz)$.
\end{proof}

With the $\iota$ functional and these properties established, we proceed to the establish the submodule property. Phrased in a positive way about inducing periodicity, the statement is the following:

\begin{lemma}[Inheritance by preimages under Steenrod powers]\label{lem:partialP^i-module} 
Assume $\bar H^*$ is irreducibly $k$-periodic and $y \in \bar H^*$ is a homogeneous element. Fix $i \geq 0$.
	\begin{enumerate}
	\item If $p = 2$ and $\Sq^i(y)$ induces periodicity, then $y$ also induces periodicity. % as long as $2k \leq n - 2$.
	\item If $p \geq 3$ and $\P^i(y)$ induces periodicity, then $y$ induces periodicity as well if $2(p-1)k \leq n-1$ or if $pk \leq n - 1$ and $p |P^{\iota(y)}(y)| \leq n - 1$.
	\end{enumerate}
\end{lemma}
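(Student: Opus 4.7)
The plan is to prove the contrapositive by strong induction on $|y|$: if a homogeneous $y \in \bar H^*$ does not induce periodicity, then $P^i(y)$ does not either.

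For the inductive step at $|y| > k$, I would write $y = xz$ using the isomorphism $\xi$ from \eqref{eqn:xi}, so that $|z| = |y| - k < |y|$. Applying the Cartan formula gives
\[
P^i(y) \;=\; \sum_{a+b=i} P^a(x)\, P^b(z).
\]
If $P^i(y)$ induces periodicity, then Corollary \ref{lem:AllPeriodsIrreducible} produces a summand $P^a(x)\, P^b(z)$ that also induces periodicity, and Lemma \ref{lem:Products+Factors} forces $P^b(z)$ to induce periodicity. The inductive hypothesis applied at $|z| < |y|$ yields that $z$ induces periodicity, and hence so does $y = xz$.

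For the base case $|y| \leq k$, Lemma \ref{lem:Products+Factors} implies that $k$ divides $|P^i(y)|$ whenever $P^i(y)$ induces periodicity. Combined with the \emph{a priori} bound $|P^i(y)| \leq 2|y|$ in part (1) and $|P^i(y)| \leq p|y|$ in part (2) (coming from the unstable vanishing of Steenrod operations), this pins down the possibility $|P^i(y)| \in \{k, 2k, \dots, pk\}$. When $|y| < k/p$, the bound $|P^i(y)| < k$ forces $P^i(y)$ non-periodic and the claim is vacuous. When $|y| = mk/p$ for some $m \in \{1,\dots,p\}$ and $i$ is the top Steenrod index (so $P^i(y)$ coincides with the top power $y^p$, or $y^2$ in part (1)), Lemma \ref{lem:Products+Factors} immediately yields periodicity of $y$ from that of $y^p$. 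For the intermediate sub-cases, I would pass to a power $y^q$ with $q|y| > k$ and use Additivity of the $\iota$ functional (Lemma \ref{lem:Additivity}) to conclude $\iota(y^q) = q\,\iota(y)$; the inductive step applied to $y^q$ then yields $y^q$ inducing periodicity, and hence $y$ does by Lemma \ref{lem:Products+Factors}. The degree hypotheses in the lemma --- namely $3k \leq n - 1$ automatic in part (1), and $2(p-1)k \leq n - 1$ or $pk \leq n - 1$ together with $p|P^{\iota(y)}(y)| \leq n - 1$ in part (2) --- are chosen precisely so that the iterated applications of Additivity stay within the admissible degree range of the periodic subquotient.

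The hard part will be cleanly handling the ``divisor'' sub-case, where $|y|$ divides $k$ evenly so that the natural choice of power $y^q$ satisfies $q|y| - k = |y|$ and the inductive step applied to $y^q$ fails to give a strict decrease in degree. This is exactly the range absorbed by the top-Steenrod-image sub-case above, where $P^i(y) = y^p$ directly and Lemma \ref{lem:Products+Factors} closes the argument. The role played by irreducibility (Corollary \ref{lem:AllPeriodsIrreducible}) throughout --- both in the inductive step, to extract a single periodic Cartan summand, and in the base case, to identify which sub-case applies --- is the substitute for the one-dimensionality of $\bar H^k$ that makes the analogous argument immediate in the fully periodic setting of \cite{Nienhaus-pre, Kennard13}.
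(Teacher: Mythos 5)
Your inductive step for $|y| > k$ is sound and morally identical to the paper's observation that a minimal counterexample must have degree at most $k$: the Cartan formula plus Corollary \ref{lem:AllPeriodsIrreducible} and Lemma \ref{lem:Products+Factors} produce a periodic $P^b(z)$ with $|z| = |y| - k$, to which the inductive hypothesis applies. The gap is in your base case $|y| \leq k$, specifically in the "divisor" sub-case that you flag as hard and then claim is absorbed by the top Steenrod power. That absorption fails for $p \geq 3$. Take for instance $k = 4 = |y|$ and $p = 3$: then $|P^1(y)| = 8 = 2k$ is a legitimate multiple of $k$, $P^1(y)$ is not $y^3$, and your power trick gives no admissible $q$ (since $q|y| - k \geq |y|$ for all $q \geq 2$, so the factor $z'$ never has degree below $|y|$ and the strong-induction hypothesis cannot be invoked). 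The coincidence you rely on --- that when $|y|$ divides $k$ the only non-trivial Steenrod index hitting a multiple of $k$ is the top one --- is a feature of $p = 2$ (where $|\Sq^i(y)| = i + |y| \leq 2|y|$ leaves no room for intermediate multiples) but is simply false for odd primes.

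The paper closes this gap with a genuinely different mechanism that you do not have: in addition to the minimal-degree counterexample $y$, it introduces a second extremal element $z$ chosen to \emph{maximize} $\iota(z)$ among non-periodic elements with some $P^j(z)$ periodic (then to minimize degree among those maximizers). Bounding $|P^{\iota(y)}(y)|$ and $|P^{\iota(z)}(z)|$ by $(p-1)k$ (using that $P^{\iota}$ of each is neither zero nor the top power) and invoking the degree hypotheses of the lemma puts the product $P^{\iota(y)}(y)\,P^{\iota(z)}(z)$ below degree $n-1$, so Additivity (Lemma \ref{lem:Additivity}) gives $\iota(yz) = \iota(y) + \iota(z) > \iota(z)$, contradicting the maximality of $\iota(z)$. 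This double-extremal argument is the substitute your approach is missing; without something like it, the small-degree counterexamples with an intermediate Steenrod index remain unruled out. You would also need to make precise the degree bookkeeping required to iterate Additivity across the power $y^q$, which you currently only gesture at.
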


The definition of irreducibility and Lemmas \ref{lem:Products+Factors} and \ref{lem:partialP^i-module} imply that 
	\[N = \{y \in \bar H^* \mid y \mathrm{~does~not~induce~periodicity}\}\]
is both an ideal of $\bar H^*$ and a submodule of $\bar H^*$ with respect to the action of the Steenrod algebra when $p = 2$ or when $p \geq 3$ and $2(p-1)k \leq n - 1$. While we do not use this fact explicitly, we make this remark since it might help remember these properties.

\begin{remark}
In the case where $H^*(M; \Z_2)$ is $k$-periodic via a non-zero element $x$ in all degrees from $0$ to $n$ and $2k \leq n$, then this discussion applies noting $\bar H^* = H^*(M; \Z_2)$ in this case and that the definition of periodicity in this case has the bound $k\leq \tfrac{n-1}{3}$ replaced by $k \leq \tfrac{n}{2}$. If $k$ is minimal, then $N^i = H^i(M;\Z_2)$ for all $i \neq 0 \bmod k$, and the quotient space $\bar H^*/N$ is isomorphic to a singly generated polynomial algebra $\Z_2[x]/(x^{\floor{n/k} + 1})$. Since $\bar H^*/N$ is also a module over the Steenrod algebra, the result of Adem \cite{Adem52} implies that $k$ is a power of two. This provides repackaged proof of the main $\Z_2$-periodicity theorem in \cite{Kennard13}. If moreover $\bar H^*/N$ could be realized as a the cohomology of a topological space $\bar M$, then Adams's improvement of Adem's result implies that either $k \in \{1, 2, 4\}$ or $k = 8$ and $x^3 = 0$ (see \cite{Adams60}). This would prove the Four Periodicity Conjecture in \cite[Section 6]{Kennard13} (see also \cite{HarperKennard-pre}).
\end{remark}

The proof of Lemma \ref{lem:partialP^i-module} is a simplification of arguments in both \cite{Kennard13} and \cite{Nienhaus-pre}. 

\begin{proof}[Proof of Lemma \ref{lem:partialP^i-module}] 
The proofs for $p = 2$ and $p \geq 3$ are similar, so we include here only the proof for $p \geq 3$. In particular, we may assume $pk \leq n - 1$. Without loss of generality, we may assume that $k$ is the minimum period. In particular, $k$ divides any other period by Lemma \ref{lem:Products+Factors}.

Let $y \in \bar H^*$ be a minimal counterexample to the claim in the following sense: $y$ does not induce periodicity, $P^i(y)$ induces periodicity for some $i > 0$, $|P^{\iota(y)}(y)| \leq \tfrac{n-1}{p}$ if $2(p-1)k > n - 1$, and $|y|$ is minimal among all such counterexamples. Note by minimality that $|y| \leq k$ since otherwise we can factor $y$ by periodicity to obtain an element $y'$ with $|y'| < |y|$ such that $y'$ does not induce periodicity (by Lemma \ref{lem:Products+Factors}) but $P^{i'}(y')$ does for some $0 < i' \leq i$ (by Lemma \ref{lem:Additivity}).

The proof considers an auxiliary element $z \in H^*$ (possibly equal to $y$) with the following properties: $z$ does not induce periodicity,  $P^j(z)$ does induce periodicity for some $j > 0$, $\iota(z)$ is maximal among all such choices for $z$, and moreover $|z|$ is minimal among such maximizers. As with the element $y$, we have that $|z| \leq k$ by minimality.

If zero induces periodicity, $\bar H^* = 0$ and the lemma holds trivially. Otherwise, the fact that $P^{\iota(z)}(z)$ induces periodicity implies it is non-zero. In particular, $|P^{\iota(z)}| \leq |z^p|$. Moreover, $P^{\iota(z)}(z) \neq z^p$ since otherwise $z$ would induce periodicity by Lemma \ref{lem:Products+Factors}. 
Therefore $|P^{\iota(z)}(z)|$ is both strictly less than $|z^p| = p |z| \leq pk$ and divisible by $k$, so we have that $|P^{\iota(z)}(z)| \leq (p-1)k$. 

The same argument shows that $|P^{\iota(y)}(y)| \leq (p-1)k$, and we recall that moreover $|P^{\iota(y)}(y)| \leq \tfrac{n-1}{p}$ if $2(p-1) k > n - 1$. Combined with the assumptions of the lemma, 
%namely that $2(p-1)k \leq n - 1$ or that $pk \leq n - 1$ and $|P^{\iota(y)}(y)| \leq \tfrac{n-1}{p}$, 
we see that the product $P^{\iota(y)}(y) P^{\iota(z)}(z)$ has degree 
	\[|P^{\iota(y)}(y)| + |P^{\iota(z)}(z)| \leq n - 1.\]
In this range, additivity of the $\iota$ functional holds, and we have $\iota(yz) = \iota(y) + \iota(z)$. Since $\iota(y) > 0$, we have a contradiction to the maximality of $\iota(z)$.
\end{proof}

We are ready to prove an extension of the $\Z_p$-periodicity theorem from \cite{Kennard13} to the case of irreducible partial periodicity.

\begin{theorem}[Partial $\Z_2$-periodicity theorem, irreducible case]
If $H^*(M; \Z_2)$ is irreducibly $k$-periodic on degrees $1 \leq * \leq n - 1$, then the minimum period is of the form $2^\alpha$ with $\alpha \geq 0$.
\end{theorem}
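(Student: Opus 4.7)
The plan is to distill the information about $H^*(M; \bZ_2)$ into a quotient that looks enough like a truncated polynomial algebra to apply Adem's classical decomposability theorem. Let $k$ denote the minimum period, which exists and divides every other period by Lemma \ref{lem:Products+Factors}; by Corollary \ref{lem:AllPeriodsIrreducible}, the $k$-periodicity is automatically irreducible, so I may fix $x \in \bar H^k$ inducing periodicity irreducibly. Form the quotient
\[
R = \bar H^*/N, \qquad N = \{y \in \bar H^* \mid y \text{ does not induce periodicity}\}.
\]
By Lemma \ref{lem:Products+Factors}, $N$ is an ideal of $\bar H^*$, and by Lemma \ref{lem:partialP^i-module} with $p = 2$ (this is where irreducibility enters), $N$ is also a submodule over the mod-$2$ Steenrod algebra. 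Thus $R$ inherits both the graded ring and unstable Steenrod module structures. Moreover, since every element inducing periodicity has degree equal to a positive multiple of $k$, we have $R^j = 0$ whenever $j$ is not a positive multiple of $k$ in $\{1, \ldots, n-1\}$; in particular $R^{k+i} = 0$ for every $0 < i < k$.

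With this setup, the proof reduces to a short Steenrod calculation. The class $\bar x \in R^k$ is nonzero because $x$ induces periodicity, and $\bar x^2 \in R^{2k}$ is nonzero because $2k \leq n-1$ (which follows from $3k \leq n-1$) and $x^2$ is, by construction, a product of periodicity-inducing elements. The instability relation $\Sq^{|u|}(u) = u^2$ in $H^*(M; \bZ_2)$ descends through $\bar H^*$ to give $\Sq^k(\bar x) = \bar x^2 \neq 0$ in $R$. On the other hand, for any $0 < i < k$, the class $\Sq^i(\bar x)$ lives in $R^{k+i} = 0$, so $\Sq^i(\bar x) = 0$. Assume now for contradiction that $k$ is not a power of two. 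Adem's theorem \cite{Adem52} then expresses $\Sq^k$ as a sum
\[
\Sq^k = \sum_j a_j \, \Sq^{b_j}
\]
in the mod-$2$ Steenrod algebra, where each $a_j$ has positive degree and each $0 < b_j < k$. Applying this identity to $\bar x$ yields
\[
\bar x^2 = \Sq^k(\bar x) = \sum_j a_j\bigl(\Sq^{b_j}(\bar x)\bigr) = 0,
\]
contradicting $\bar x^2 \neq 0$. Hence $k = 2^\alpha$ for some $\alpha \geq 0$.

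The main technical input has already been overcome in the preceding lemmas: establishing that $N$ is a Steenrod submodule (Lemma \ref{lem:partialP^i-module}) is the delicate step, and this is precisely where the irreducibility hypothesis is needed. The remaining degree bookkeeping -- that $R^j$ vanishes outside positive multiples of $k$, and that the relevant Steenrod operations land inside $\bar H^*$ -- is immediate from the minimality of $k$, Lemma \ref{lem:Products+Factors}, and the bound $2k \leq n - 1$ built into the definition of $k$-periodicity on degrees $1 \leq * \leq n - 1$. A notable feature of this approach, compared to the fully periodic case sketched in the remark following Lemma \ref{lem:partialP^i-module}, is that one does not need $R^k$ to be one-dimensional; it suffices to track the behavior of a single chosen $\bar x$ under the Steenrod algebra.
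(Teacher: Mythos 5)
Your proof is correct, and it takes a genuinely different route from the paper's. The paper applies the Adem decomposition $\Sq^k = \sum \Sq^{2^i}\circ Q_i$ to $x$ inside $\bar H^*$, invokes irreducibility to conclude that \emph{some} term $\Sq^{2^i}(y_i)$ induces periodicity, and then handles that term via the $\iota$-functional and a factorization $y_i = xz$; the contradiction ultimately comes from showing that an element of degree strictly between $k$ and $2k$ would have to induce periodicity. You instead pass to the quotient $R = \bar H^*/N$, using Lemma \ref{lem:Products+Factors} for the ideal structure and Lemma \ref{lem:partialP^i-module} for the Steenrod submodule structure, and then the same contradiction is immediate from the vanishing of $R$ in degrees that are not positive multiples of $k$: every term of the decomposed $\Sq^k(\bar x)$ factors through $R^{k+b}$ with $0<b<k$. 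This bypasses the $\iota$-functional entirely for the $p=2$ case and is arguably the cleaner argument. (It is close in spirit to the remark following Lemma \ref{lem:partialP^i-module}, where the paper observes that in the fully periodic case $\bar H^*/N \cong \Z_2[x]/(x^m)$ and Adem's theorem applies directly; your contribution is noticing that the remark's strategy extends to the partial case because only $R^{k+b}=0$ for $0<b<k$ and $\bar x^2 \neq 0$ are needed, not $\dim R^k = 1$.) One small housekeeping point: to assert $\bar x \neq 0$ you implicitly use $\bar H^* \neq 0$, which holds since $k$ is not a power of two forces $k \geq 3$ and a nonzero $x$; it is worth stating this degenerate-case dismissal explicitly.
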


\begin{proof}
Let $\bar H^*$ be the associated periodic subquotient. Since every element inducing periodicity does so irreducibly by Lemma \ref{lem:AllPeriodsIrreducible}, we may assume $k$ is the minimum period. We assume that $k$ is not a power of two and derive a contradiction.

The Adem relations imply a relation of the form
	\[\Sq^k = \sum \Sq^{2^i} \circ Q_i\]
for some Steenrod algebra elements $Q_i$ where the sum runs over $i \geq 0$ such that $2^i < k$. Evaluating on $x$ yields an expression of the form
	\[x^2 = \sum \Sq^{2^i}(y_i)\]
for some $y_i \in \bar H^*$. Since $x$ induces periodicity, so does $x^2$ by Lemma \ref{lem:Products+Factors}. Moreover, this periodicity is irreducible, so some element of the form $\Sq^{2^i}(y_i)$ induces periodicity as well.

First suppose that $|y_i| \leq k$. The element $\Sq^{2^i}(y_i)$ has degree less than $2k$, which is at most $n - 2$, so Lemma \ref{lem:partialP^i-module} implies that $y_i$ induces periodicity. But then $|y_i|$ is divisible by $k$, $|y_i| = k$, and $2^i = 2k - |y_i| = k$, so we have a contradiction to the fact that $2^i < k$.

Suppose now that $|y_i| > k$. Using periodicity, we may write $y_i = x z$. By the additivity of the $\iota$ functional, we have $\iota(z) = \iota(y_i) \leq 2^i$, so we have an element of the form $\Sq^j(z)$ that induces periodicity with the property that $j < k$ and $|z| \leq k$. As in the previous paragraph, this leads to a contradiction.
\end{proof}

\begin{theorem}[Partial $\Z_p$-periodicity theorem, irreducible case] 
Let $p$ be an odd prime. If $H^*(M; \Z_p)$ is irreducibly $k$-periodic on degrees $1 \leq * \leq n - 1$ with $pk \leq n - 1$, then 
%$H^i(M; \bZ_p) = 0$ for $1 < i < n - 1$ 
the minimum such $k$ satisfies either $k = 1$ or $k = 2 \lambda p^\alpha$ for some $\alpha \geq 0$ and $1 \leq \lambda \leq p - 1$. If moreover $k \leq \tfrac{n-1}{2(p-1)}$ or $\dim \bar H^k = 1$, then $\lambda$ divides $p - 1$.
\end{theorem}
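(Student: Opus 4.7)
The proof should mirror the template of the preceding $\Z_2$-periodicity theorem, with three new complications: (i) the possibility that $k$ is odd, (ii) the presence of the Bockstein $\beta$ in the mod-$p$ Steenrod algebra $\mathcal{A}_p$, and (iii) verifying the hypotheses of Lemma \ref{lem:partialP^i-module} in the weak regime $pk \leq n-1$. For (i), if $k$ is odd then graded commutativity (with $p$ odd) forces $x^2 = 0$ for every $x \in \bar H^k$; since $x$ must be nonzero to induce periodicity on nonzero cohomology, this contradicts the injectivity of $\cup x : \bar H^k \to \bar H^{2k}$ whenever $k > 1$, giving $k = 1$. Assume henceforth that $k = 2m$ is even, fix a periodicity-inducing $x \in \bar H^k$ (which induces periodicity irreducibly by Corollary \ref{lem:AllPeriodsIrreducible}), and note $P^m(x) = x^p$ by the unstable axiom, with $x^p$ inducing periodicity by Lemma \ref{lem:Products+Factors}.

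Suppose, for contradiction, that $m$ is not of the form $\lambda p^\alpha$ for any $\alpha \geq 0$ and $\lambda \in \{1,\ldots,p-1\}$, i.e., the base-$p$ expansion of $m$ has more than one nonzero digit. Milnor's theorem states that $\mathcal{A}_p$ is generated as an algebra by $\beta$ together with the operations $P^{p^i}$ for $i \geq 0$, so $P^m$---not being one of these generators---lies in the square of the augmentation ideal. Pulling out the leftmost factor in each term of a generator expansion yields a decomposition
\begin{equation*}
P^m \;=\; \sum_{i \,:\, p^i < m} P^{p^i} \cdot Q_i \;+\; \beta \cdot R \;+\; T \cdot \beta
\end{equation*}
in $\mathcal{A}_p$. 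Applying both sides to $x$ expresses $x^p$ as a sum of classes $P^{p^i}(Q_i x)$ plus Bockstein contributions. Irreducibility of the periodicity of $x^p$ forces some summand to induce periodicity, and---once the Bockstein terms are excluded (see the main obstacle)---Lemma \ref{lem:partialP^i-module} gives that $y_i := Q_i x$ itself induces periodicity. Minimality of $k$ then forces $k \mid |y_i| = pk - 2(p-1)p^i$, i.e., $k \mid 2(p-1)p^i$; combined with $p^i < m = k/2$ this forces $m = \lambda p^\alpha$ with $\lambda \in \{1,\ldots,p-1\}$, contradicting the assumption. The refinement $\lambda \mid p-1$ under the strengthened hypothesis $k \leq (n-1)/(2(p-1))$ or $\dim \bar H^k = 1$ follows by a sharper choice of outer power: the first hypothesis ensures that Lemma \ref{lem:partialP^i-module} applies without invoking the auxiliary bound $p|P^{\iota(y)}(y)| \leq n-1$, and the sharpened divisibility $m \mid (p-1)p^\alpha$ extracted from the $P^{p^\alpha}$-term in the decomposition forces $\lambda \mid p-1$; in the alternative case, $\dim \bar H^k = 1$ collapses the problem to the fully-periodic setting of \cite{Kennard13}, where the sharper conclusion is already available.

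The main obstacle is twofold. First, Lemma \ref{lem:partialP^i-module} is formulated only for $P^i$, so the Bockstein-correction terms $\beta R$ and $T\beta$ must be handled separately. I would pass to the quotient $\mathcal{A}_p/(\beta)$, in which $P^m$ still decomposes purely into $P^{p^i}$-terms whenever $m$ is not of the form $\lambda p^\alpha$, and use a parity argument---the intermediate classes produced by Bockstein summands have odd degree, where the odd-$k$ analysis of the first paragraph shows periodicity-inducers cannot exist---to discard them. Second, in the weak regime $pk \leq n-1$ alone, verifying the hypothesis of Lemma \ref{lem:partialP^i-module} requires a careful choice of outer $P^{p^i}$ in the Adem decomposition, using iterated applications of the $\iota$-additivity Lemma \ref{lem:Additivity} to ensure the auxiliary bound $p|P^{\iota(y_i)}(y_i)| \leq n-1$ is satisfied for at least one term that irreducibility can single out.
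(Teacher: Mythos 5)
Your proposal has the right skeleton (Adem decomposition $+$ irreducibility $+$ Lemma~\ref{lem:partialP^i-module}), and indeed matches the paper's strategy in outline, but three of the points you flag as ``obstacles'' are places where your plan would actually stall, and in each case the paper resolves them differently from what you sketch.

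First, the Bockstein terms are a red herring you create by invoking Milnor's generation of the \emph{full} Steenrod algebra. Since $P^m$ already lies in the Bockstein-free subalgebra $\mathcal{P}^*$ generated by the $P^i$, and the first (power-only) family of Adem relations keeps you inside $\mathcal{P}^*$, one has a decomposition $P^{k/2}=\sum_{i\le\alpha}P^{p^i}\circ Q_i$ with $Q_i\in\mathcal{P}^*$ and no Bockstein anywhere. This is exactly what the paper cites as \cite[Lemma 2.2]{Kennard13}. Your proposed workaround---a parity argument discarding $\beta R(x)$ and $T\beta(x)$---does not close the gap, because those terms land in the even degree $pk$: there is no parity obstruction to such a class inducing periodicity, and Lemma~\ref{lem:partialP^i-module} says nothing about preimages under $\beta$, so you cannot push the contradiction back through $\beta$ to an odd-degree class.

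Second, the weak regime $pk\le n-1$ is not handled by ``iterated applications of $\iota$-additivity to pick a good outer $P^{p^i}$.'' After writing $y_i = x^s z$ with $1\le|z|\le k$ and extracting (via Cartan plus irreducibility) factors $P^{p^i-j}(x^s)$ and $P^j(z)$ that induce periodicity, the paper splits into two cases. When $|P^j(z)|\le\tfrac{n-1}{p}$ or $k\le\tfrac{n-1}{2(p-1)}$, Lemma~\ref{lem:partialP^i-module} applies and you get $j=p^\alpha$, $\mu=0$, $\lambda\mid p-1$. When neither holds, Lemma~\ref{lem:partialP^i-module} is \emph{not} available; the paper instead deduces $|P^j(z)|\ge 2k$ (divisibility by $k$ plus $|P^j(z)|>\tfrac{n-1}{p}\ge k$), combines this with $|P^j(z)|\le 2(p-1)p^\alpha+k$ to get $k\le 2(p-1)p^\alpha$, and hence $\mu=0$---but \emph{not} $\lambda\mid p-1$. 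This is precisely why the theorem statement only guarantees $1\le\lambda\le p-1$ in general and needs the extra hypothesis for $\lambda\mid p-1$.

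Third, the refinement under $\dim\bar H^k=1$ does \emph{not} ``collapse to the fully-periodic setting of \cite{Kennard13}.'' You are still partially periodic; the one-dimensionality only restores the property that periodicity-inducers are scalar multiples of $x$-powers. The paper's argument in this case is genuinely new: it first runs a $\gcd(\lambda,p-1)$ computation with $\iota$-additivity to reduce to $\gcd(\lambda,p-1)=1$, then invokes a \emph{second} Adem-type relation $P^{k/2}=P^{mp^\alpha}\circ Q'_\alpha + \sum_{i<\alpha}P^{p^i}\circ Q'_i$ tailored to this case, and only then extracts $\lambda=1$. Nothing in your sketch produces this second Adem relation or the $\gcd$ reduction.

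In short: correct overall architecture, but the Bockstein ``fix,'' the treatment of the weak range, and the $\dim\bar H^k=1$ case each need the specific mechanisms the paper supplies and are not recoverable from what you wrote.
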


Regarding the sufficient condition for proving $\lambda \mid p - 1$, notice that $\dim \bar H^k = 1$ is the situation considered in \cite{Kennard13}, and the arguments there carry over easily since this assumption implies that any element inducing periodicity not only has degree divisible by $k$ but also equals a power of a generator of $\bar H^k$.  We also note that $\lambda\mid p - 1$ holds automatically when $p = 3$, and this is useful for us later.

\begin{proof}
Let $\bar H^*$ denote the periodic subquotient. By Lemma \ref{lem:AllPeriodsIrreducible}, an element inducing periodicity does so irreducibly, so we may assume without loss of generality that $k$ is the minimum period. In particular, $k$ divides any other period as a consequence of Lemma \ref{lem:Products+Factors}.

Fix $x \in \bar H^k$ inducing periodicity on $\bar H^*$. If $k$ is odd or $x=0$, $\bar H^*=0$ is $1$-periodic. Otherwise, let $\alpha \geq 0$ denote the largest power of $p$ that divides $k$. Then we can write
	\[k = 2 \lambda p^\alpha + 2 \mu p^{\alpha + 1}\]
for some integers $\lambda$ and $\mu$ satisfying $1 \le \lambda \le p-1$ and $\mu \geq 0$. Our task is to prove $\mu = 0$ in general and that $\lambda$ divides $p - 1$ under each of the three additional conditions.

The Adem relations (e.g., see \cite[Lemma 2.2]{Kennard13}) imply a relation of the form
	\[P^{k/2} = \sum_{i \leq \alpha} P^{p^i} \circ Q_i\]
for some Steenrod algebra elements $Q_0,\ldots,Q_\alpha$. Evaluating on $x$ yields an expression of the form
	\[x^p = \sum_{i \leq \alpha} P^{p^i}(y_i)\]
for some homogeneous elements $y_0, \ldots, y_\alpha \in \bar H^*$. By Lemma \ref{lem:Products+Factors}, $x^p$ induces periodicity in $\bar H^*$. By Lemma \ref{lem:AllPeriodsIrreducible}, this periodicity is irreducible, so we have that some $P^{p^i}(y_i)$ induces periodicity in $\bar H^*$.

Using periodicity, we may write $y_i = x^s z$ for some $s \geq 0$ and $z \in \bar H^*$ with $1 \leq |z| \leq k$. Applying $P^{p^i}$ to both sides of this equation and using the Cartan formula, irreducibility, and Lemma \ref{lem:Products+Factors}, we obtain two elements of the form $P^{p^i - j}(x^s)$ and $P^j(z)$ for some $0 \leq j \leq p^i$ that induce periodicity.

{\bf Case 1}: Assume $|P^j(z)| \leq \tfrac{n-1}{p}$ or $k \leq \tfrac{n-1}{2(p-1)}$. By Lemma \ref{lem:partialP^i-module}, $z$ induces periodicity. Hence both $z$ and $P^j(z)$ have degree divisible by $k$, and this means that $k$ divides $|P^j| = 2(p-1) j$. Since $j \leq p^\alpha$, this implies that $j = p^\alpha$, $\mu = 0$, and $\lambda|p-1$, as claimed.

{\bf Case 2}: Assume $|P^j(z)| > \tfrac{n-1}{p}$ and $k > \tfrac{n-1}{2(p-1)}$. Since $|P^j(z)|$ is divisible by $k$, we have $|P^j(z)| \geq 2k$. On the other hand,
	\[|P^j(z)| = 2(p-1)j + |z| \leq 2(p-1)p^\alpha + k,\]
so $k \leq 2(p-1)p^\alpha$. This already implies $\mu = 0$. 

To finish the proof, it suffices to show $\lambda \mid p - 1$ in Case 2 when additionally $\dim \bar H^k = 1$.

First we assume that $g = \gcd(\lambda, p - 1)$ is not equal to one. Since both $P^{p^i - j}(x^s)$ and $P^j(z)$ induce periodicity, their degrees are divisible by $k$ and hence by $p^\alpha$. Hence $j = p^\alpha$. We claim moreover that $\iota(z) = p^\alpha$. Indeed, if $P^{j'}(z)$ induced periodicity for some $j' < p^\alpha$, then $k$ and hence $p^\alpha$ would divide $|P^j(z)| - |P^{j'}(z)| = 2 (p - 1) (j - j')$, a contradiction. By the additivity of $\iota$, we have $\iota(z^{\lambda/g}) = \tfrac \lambda g p^\alpha$. On the other hand,
	\[|z^{\lambda/g}| = \frac \lambda g\of{rk - 2(p-1)p^\alpha} = \of{\frac \lambda g r - \frac{p-1}{g}} k,\]
so $z^{\lambda/g}$ has degree divisible by $k$. By the assumption $\dim \bar H^k = 1$, the element $z^{\lambda/g}$ is a multiple of a power of $x$. It is a non-zero multiple since $z^{\lambda/g}$ maps to a (non-zero) element inducing periodicity under $P^{(\lambda/g)p^\alpha}$. Therefore $z$ induces periodicity and hence has degree divisible by $k$ by Lemma \ref{lem:Products+Factors}. Since $P^{p^\alpha}(z)$ also has degree divisible by $k$, the degree $|P^{p^\alpha}| = 2p^\alpha(p-1)$ is divisible by $k = 2 p^\alpha \lambda$ and hence $\lambda \mid p - 1$. 

We may now assume $\gcd(\lambda, p - 1) = 1$. We may also assume $\lambda > 1$ since otherwise $\lambda \mid p - 1$. We proceed as in the beginning of the proof but using a different Adem relation. Specifically, let $m$ be the unique integer such that $0 < m < \lambda$ and 
	\[m(p-1) = -1 + l\lambda\]
for some $0 < l < p - 1$. The Adem relations (e.g., see \cite[Lemma 2.2]{Kennard13}) imply another relation of the form
	\[P^{k/2} = P^{mp^\alpha} \circ Q_\alpha' + \sum_{i < \alpha} P^{p^i} \circ Q_i'\]
for some Steenrod algebra elements $Q_0', \ldots, Q_\alpha'$. Evaluating on $x$ as before, we obtain an element of the form $P^{mp^\alpha}(y)$ or $P^{p^i}(y)$ with $i < \alpha$ that induces periodicity. For elements of the latter type, the proof above easily implies a contradiction. For elements of the former type, we note that $y = x^s z$ for some $z \in \bar H^*$ with $|z| = 2 p^\alpha$, that $P^j(z)$ induces periodicity for some $j$, and again that $j$ is divisible by $p^\alpha$. It follows that $j = 0$, $j = p^\alpha$, or $j \geq 2p^\alpha$, which imply respectively that $P^j(z)$ equals $z$, $z^p$, or $0$. The last case cannot occur since $P^j(z)$ induces periodicity, and either of the first two cases implies that $z$ itself induces periodicity. As before, it follows that $|z| = 2 p^\alpha$ is divisible by $k = 2 \lambda p^\alpha$, so $\lambda$ equals one and hence divides $p - 1$.
\end{proof}

\section{Decomposition Lemma}\label{sec:DecompositionLemma}
In this section, we analyze the non-irreducible case of periodicity. The main result is Lemma \ref{lem:Decomposition}, which shows that any periodic cohomology ring decomposes as a sum of irreducibly periodic cohomology rings. As before, we let $H^*(M; \bZ_p)$ be $k$-periodic on degrees $1 \leq * \leq n - 1$, we fix $x \in H^k(M; \bZ_p)$ inducing this periodicity, and we define the subquotient $\bar H^*$ and the isomorphism $\xi:\bar H^i \to \bar H^{i+k}$ given by $\xi(y) = x y$ as in Equation \eqref{eqn:xi}.

Let $\End_0(\bar H^*)$ denote the ring of degree-preserving linear operators on $\bar H^*$. We define a representation
	\[\rho:\bar H^k \to \End_0(\bar H^*)\]
where the image $\rho_a$ of $a \in \bar H^k$ is defined on homogeneous elements $u \in \bar H^*$ by 
	\[\rho_a(u) = \left\{\begin{array}{rcl} 
	  		\xi^{-1}(a \cup u) & \mathrm{if} & |u| \leq n - 1 - k\\
			a \cup \xi^{-1}(u) & \mathrm{if} & |u| \geq 1 + k
			\end{array} \right.\]

\begin{lemma}
If $3k \leq n - 1$, then $\rho:\bar H^k \to \End_0(\bar H^*)$ is an injective homomorphism of rings, where the product structure on $\bar H^k$ is given by the formula $ab = \xi^{-1}(a \cup b)$.
%, where the product structure on $\bar H^k$ is defined by declaring that $a b = c$ if the cup product of $a$ and $b$ equals the cup product of $c$ and $x$.
\end{lemma}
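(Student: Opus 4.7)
My plan is to verify the claim in four stages: well-definedness of each $\rho_a$, the ring structure on $\bar H^k$, the homomorphism property $\rho_{ab}=\rho_a\circ\rho_b$, and finally injectivity. The assumption $3k \le n-1$ ensures that $\xi$ is an isomorphism on every degree that appears in the arguments, in particular on degrees $k$, $2k$, and $3k$, which is where essentially every computation takes place.

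First I would check that $\rho_a$ is well-defined, i.e. that on the overlap range $1+k \le |u| \le n-1-k$ the two formulas agree. Applying $\xi$ to the ``high degree'' expression gives $\xi(a\cup\xi^{-1}(u)) = x\cdot a\cdot\xi^{-1}(u) = a\cdot(x\cdot\xi^{-1}(u)) = a\cup u$, using only commutativity/associativity of cup product and the defining relation $x\cdot\xi^{-1}(u)=u$; since $\xi$ is an isomorphism in the relevant degree, this shows $a\cup\xi^{-1}(u)=\xi^{-1}(a\cup u)$. Degree-preservation is immediate from either formula. The same style of calculation, multiplying through by $x$, shows that $\bar H^k$ with the operation $a\cdot b \defeq \xi^{-1}(a\cup b)$ is associative: applying $\xi^2$ (which is injective on $\bar H^k$ because $3k\le n-1$) to both $(ab)c$ and $a(bc)$ reduces the equality to the associativity of the cup product $a\cup b\cup c \in \bar H^{3k}$. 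The element $x$ acts as unit since $\xi^{-1}(x\cup a)=\xi^{-1}(\xi(a))=a$, and bilinearity of $\cdot$ is inherited from the cup product.

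Next I would verify $\rho_{ab}=\rho_a\circ\rho_b$ by evaluating on a homogeneous $u$ and splitting into cases according to the degree of $u$. In the low-degree case $|u|\le n-1-k$, both $\rho_b(u)$ and $u$ lie in the low range, so $\rho_a(\rho_b(u))=\xi^{-1}(a\cup\xi^{-1}(b\cup u))$, while $\rho_{ab}(u)=\xi^{-1}(\xi^{-1}(a\cup b)\cup u)$; applying $\xi^2$ to each (which is injective in this range) and simplifying using $x\cdot\xi^{-1}(\,\cdot\,)=(\,\cdot\,)$ reduces both to $a\cup b\cup u$. The high-degree case $|u|\ge 1+k$ is symmetric, and the overlap case is covered by either computation thanks to the well-definedness established above. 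Additivity $\rho_{a+b}=\rho_a+\rho_b$ is immediate.

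Finally, for injectivity, evaluate at $u=x\in\bar H^k$: since $k\le n-1-k$, the first formula applies and $\rho_a(x)=\xi^{-1}(a\cup x)=\xi^{-1}(\xi(a))=a$. Hence $\rho_a=0$ forces $a=0$. The main obstacle, if any, is purely bookkeeping: making sure every invocation of $\xi^{-1}$ lands in a degree where $\xi$ is actually an isomorphism, which is exactly what the hypothesis $3k\le n-1$ guarantees for the degrees $1,\ldots,k$, $k,\ldots,2k$, and $2k,\ldots,3k$ that appear in the arguments above.
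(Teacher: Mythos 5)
The overall plan is sound, and the well-definedness, additivity, and injectivity steps are fine (and essentially the paper's). The gap is in the multiplicativity verification. You split into a low-degree case $|u|\le n-1-k$ (apply $\xi^2$) and a high-degree case $|u|\ge 1+k$ (``symmetric,'' i.e. apply $\xi^{-2}$). But $\xi^2\colon\bar H^{|u|}\to\bar H^{|u|+2k}$ is a composition of two isomorphisms only when both stay inside degrees $1$ through $n-1$, i.e.\ when $|u|\le n-1-2k$, not $|u|\le n-1-k$ as you assert; and symmetrically the $\xi^{-2}$ argument requires $|u|\ge 2k+1$. Under the stated hypothesis $3k\le n-1$, these two ranges fail to cover the degrees $\max(k+1,n-2k)\le |u|\le\min(2k,n-1-k)$ whenever $3k+1\le n\le 4k$ (e.g.\ $n=3k+1$, $|u|=2k$). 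The ``overlap case is covered by either computation'' remark does not close this gap: well-definedness of $\rho_a$ on the overlap does not make $\xi^2$ or $\xi^{-2}$ available there.

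The paper handles exactly this by a three-case split: for $1+k\le |u|\le 2k$ it applies $\xi$ \emph{once} (injective at degree $|u|$ because $|u|\le 2k\le n-1-k$), uses the high-degree formula $\rho_b(u)=b\cup\xi^{-1}(u)$ inside, and reduces both $\xi(\rho_a\rho_b u)$ and $\xi(\rho_{ab}u)$ to $a\cup b\cup\xi^{-1}(u)$ using only $x\cup\xi^{-1}(\cdot)=(\cdot)$. Adding that middle case to your two would complete the argument. If you instead strengthen the hypothesis to $4k\le n-1$, your two-case version does go through, but that is stronger than what the lemma (and its applications) need.
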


\begin{proof}
First, we show that $\rho_a$ is well defined on the overlapping degrees $1 + k \leq |u| \leq n - 1 - k$. Indeed, the formulas agree if their images under $\xi$ agree since this map is injective in these degrees. Hence it is equivalent to prove that
	\[a \cup u = x \cup\of{a \cup \xi^{-1}(u)}.\]
By associativity and commutativity of cup products, the right-hand side simplifies to $a \cup \of{x \cup \xi^{-1}(u)} = a \cup u$, as needed. Recall for this that if $p \neq 2$ and $k$ is odd, $\bar H \equiv 0$, in which case all claims are trivial.

Second, additivity holds because cup products are bilinear.

Next, we prove multiplicativity. Fix $a, b \in \bar H^k$ and a homogeneous element $u \in \bar H^*$. We need to prove $\rho_a(\rho_b(u)) = \rho_{ab}(u)$. We do this in three cases, using the assumption that $3k \leq n - 1$.
	\begin{itemize}
	\item If $1 \leq |u| \leq k$, then $\xi^2:\bar H^{|u|} \to \bar H^{|u| + 2k}$ is an isomorphism. Therefore the claim is equivalent to the identity
		\[\xi^{2}\of{\xi^{-1}\of{a \cup \xi^{-1}(b \cup u)}} = \xi^{2}\of{\xi^{-1}\of{\xi^{-1}(a \cup b) \cup u}}.\]
	Since $\xi$ is given by multiplication by $x$, this identity is easily verified using associativity and commutativity of cup products.
	\item If $1 + k \leq |u| \leq 2k$, then we still have that $\xi:\bar H^{|u|} \to \bar H^{|u| + k}$ is an isomorphism and hence that the claim is equivalent to the identity
		\[\xi\of{a \cup \xi^{-1}\of{b \cup \xi^{-1}\of{u}}} = \xi\of{\xi^{-1}\of{a \cup b} \cup \xi^{-1}(u)},\]
	which again is easily verified.
	\item If $1 + 2k \leq |u| \leq n - 1$, then we use the following alternative formula for $\rho_a$:
		\[\rho_b(u) = b \cup \xi^{-1}(u) = b \cup \xi\of{\xi^{-2}(u)} = \xi\of{b \cup \xi^{-2}(u)}.\]
	Using formulas similar to the right-hand side of this expression for $\rho_a$ and $\rho_{ab}$, our claim is equivalent to
		\[\xi\of{a \cup \xi^{-1}\of{b \cup \xi^{-2}(u)}} = \xi\of{\xi^{-1}(a \cup b) \cup \xi^{-2}(u)},\]
	which again is easily verified.
	\end{itemize}
Finally, $\rho$ is injective since evaluation of $\rho_a$ on $x$ yields $a$ for all $a \in \bar H^k$.
\end{proof}

\begin{lemma}[Decomposition Lemma]\label{lem:Decomposition}
Let $3k\leq n - 1$, and assume $x \in \bar H^k$ induces periodicity in $\bar H^*$. There exist $x_1, \ldots, x_m \in \bar H^k$ and non-trivial subalgebras $\bar H^*_{x_1}, \ldots, \bar H^*_{x_m}$ such that all of the following hold:
	\begin{enumerate}
	\item $\bar H^* = \bar H^*_{x_1} \oplus \ldots \oplus \bar H^*_{x_m}$.
	\item Multiplication by $x_i$ induces periodicity when restricted to $\bar H^*_{x_i}$ and induces zero when restricted to $\bar H^*_{x_j}$ for all $j \neq i$.
	\item The periodicity in (2) is irreducible in the sense that, if $x_i = a + b$, then $a$ or $b$ also induces periodicity on $\bar H^*_{x_i}$.
	\end{enumerate}
\end{lemma}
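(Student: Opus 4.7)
The plan is to exploit the representation $\rho : (\bar H^k, \star) \to \End_0(\bar H^*)$ constructed in the previous lemma, where $a \star b := \xi^{-1}(a \cup b)$. The key observation is that $A := (\bar H^k, \star)$ is a finite-dimensional commutative algebra over the field $\bZ_p$ whose multiplicative identity is the element $x$ itself. Indeed, from the formulas defining $\rho$ one sees directly that $\rho_x = \mathrm{id}$, and since $\rho$ is an injective ring homomorphism, $x \star a = a$ for every $a \in A$.

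The first step is to invoke the structure theory for commutative Artinian rings to decompose
\[
x \;=\; x_1 + \cdots + x_m
\]
as a sum of pairwise orthogonal primitive idempotents in $A$, so that $A = A_1 \oplus \cdots \oplus A_m$ with each $A_i := x_i \star A$ a local Artinian ring with identity $x_i$. Applying $\rho$ yields orthogonal idempotent operators $\rho_{x_i}$ on $\bar H^*$ summing to $\rho_x = \mathrm{id}$, so setting $\bar H^*_{x_i} := \rho_{x_i}(\bar H^*)$ produces nonzero subspaces (since $\rho$ is injective) and immediately gives the direct sum decomposition in (1). The subalgebra property, the invariance of $\bar H^*_{x_i}$ under $\xi$, and part (2) all rest on the single observation that for $u \in \bar H^*_{x_i}$ one has $x_i \cup u = x \cup u$: multiplication by $x_i$ restricted to $\bar H^*_{x_i}$ agrees with multiplication by $x$, and hence inherits the periodicity of $x$, while by orthogonality $\rho_{x_i}\rho_{x_j} = 0$ for $i \neq j$ it acts as zero on each $\bar H^*_{x_j}$.

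For the irreducibility statement (3), suppose $x_i = a + b$ with $a, b \in \bar H^k$. Project to the local summand $A_i$ by setting $a_i := x_i \star a$ and $b_i := x_i \star b$, so that $a_i + b_i = x_i$ in $A_i$. Since $A_i$ is local, at least one of $a_i, b_i$ must be a unit in $A_i$; otherwise both would lie in the unique maximal ideal of $A_i$, contradicting that their sum is the identity $x_i$. Assuming $a_i$ is a unit, $\rho_{a_i}$ is an automorphism of $\bar H^*_{x_i}$. Using the identity $\rho_a \circ \rho_{x_i} = \rho_{a \star x_i} = \rho_{a_i}$, one checks that cup product by $a$ agrees with cup product by $a_i$ on $\bar H^*_{x_i}$, so $a$ induces periodicity on $\bar H^*_{x_i}$, as required.

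The main technical obstacle I expect is the careful bookkeeping at the boundary degrees $i \in \{1, n-1\}$, where $\bar H^*$ genuinely differs from $H^*(M;\bZ_p)$ as a subquotient and where the formula for $\rho_a$ branches into two different expressions depending on degree. Verifying both the subalgebra property $u \cup v \in \bar H^*_{x_i}$ when $|uv|$ approaches $n-1$ and the periodicity conclusion at the edges of the range requires invoking the alternative formula $\rho_a(u) = a \cup \xi^{-1}(u)$ for high-degree $u$ and checking that the two formulas agree on the overlap, in the same spirit as the verifications carried out in the proof of the preceding lemma.
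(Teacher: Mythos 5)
Your proof is correct, and it takes a genuinely different route from the paper's. The paper proves the decomposition by hand: it fixes elements $x_1,\ldots,x_m$ satisfying (1') and (2') with $m$ maximal, and shows that a failure of irreducibility would allow one to split some $\bar H^*_{x_i}$ further, contradicting maximality. That splitting step is the bulk of the work — it passes to $p^l$-th powers to kill the nilpotent part of the Jordan--Chevalley decomposition of $\rho_a$ and $\rho_b$, decomposes $\bar H^*_{x_i}$ into kernel/complement pieces, and then explicitly manipulates block-diagonal operators and finite multiplicative orders over $\bZ_p$ to produce new idempotent-like operators $\rho_{\tilde a}$, $\rho_{\tilde b}$. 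You instead observe that $(\bar H^k,\star)$ is a finite-dimensional commutative unital algebra (with identity $x$) over the field $\bZ_p$, apply the structure theorem for commutative Artinian rings to write $x = x_1 + \cdots + x_m$ as a sum of orthogonal primitive idempotents with local summands $A_i$, and read off (1)–(3). In particular, your irreducibility argument — that in a local ring a decomposition of the identity must have a unit summand — replaces the paper's most involved step with a one-line standard fact. What the paper's approach buys is self-containment (no appeal to Artinian structure theory beyond Jordan--Chevalley, which the appendix uses anyway) and the constructive description of the $\tilde a,\tilde b$ in terms of $a,b$; what yours buys is brevity and conceptual clarity about \emph{why} the decomposition exists. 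Both rely on the same preceding lemma establishing $\rho$ as an injective ring homomorphism with $\rho_x = I$.

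Two small points worth spelling out if you write this up fully. First, the non-triviality of $\bar H^*_{x_i}$ needs the remark that if $\bar H^* = 0$ then $A = 0$ and the lemma holds vacuously with $m = 0$ (the paper handles this too); otherwise each primitive idempotent $x_i$ is nonzero, so $\rho_{x_i} \neq 0$ by injectivity. Second, the subalgebra closure $u \cup v \in \bar H^*_{x_i}$ and the identity $a \cup u = a_i \cup u$ for $u \in \bar H^*_{x_i}$ do require the boundary-degree case analysis you flag; the key facts making it go through are that $\rho_{x_i}$ commutes with $\xi$ on the relevant degree range (so $\bar H^*_{x_i}$ is $\xi$-stable) and that $3k \leq n-1$ forces at least one factor into the range where the first formula for $\rho$ applies, which lets you induct downward on $|uv|$ via $\xi^{-1}$. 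Your proposal correctly identifies this as the place where care is needed, even if it does not execute it.
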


Note that (1) and (2) already hold with $m = 1$ and $x_1 = x$, but (3) might fail since $x$ might decompose as a sum of elements that do not induce periodicity. An example of such a phenomenon occurring is when $\bar H^* = H^{1 \leq * \leq 2n-1}(M; \bZ_p)$ where $M^{2n}$ is a connected sum of $\CP^n$ and when $x \in \bar H^2$ corresponds to the sum of generators $x_i$ of $H^2(\CP^n;\bZ_p) \cong \bZ_p$ for each of the summands $\CP^n$. Indeed, this example models the conclusions of the lemma since the $x_i$ induce periodicity on the subspaces $\bar H^*_{x_i} = H^{1 \leq * \leq 2n-1}(\CP^n;\bZ_p)$ corresponding to the summands.

\begin{proof} 
Let $\rho:\bar H^k \to \End_0(\bar H^*)$ be the representation of rings defined above in terms of $x$. Note that $\rho_x$ is the identity map $I$. The proof shows that $\bar H^*_{x_i}$ is the image of $\rho_{x_i}$, and so $x_i$ induces periodicity on $\bar H^*_{x_i}$ if and only if $\rho_{x_i}$ is injective when restricted to $\bar H^*_{x_i}$. The lemma follows if we prove the existence of $x_1, \ldots, x_m \in \bar H^k$ such that the following hold:
	\begin{enumerate}
	\item[(1')] The subspaces $\bar H^*_{x_i} = \rho_{x_i}\of{\bar H^*}$ are non-zero and satisfy $\bar H^* = \bar H^*_{x_1} \oplus \ldots \oplus \bar H^*_{x_m}$.
	\item[(2')] The maps $\rho_{x_i}$ are the identity on $\bar H^*_{x_i}$ and zero on $\bar H^*_{x_j}$ for all $j \neq i$.
	\item[(3')] Whenever $x_i = a + b$, the restriction of $\rho_a$ or $\rho_b$ to $\bar H^*_{x_i}$ is injective.
	\end{enumerate}

Fix elements $x_1, \ldots, x_m \in \bar H^k$ satisfying (1') and (2') such that $m$ is maximal. We claim that this maximality forces (3'). Note here that we may assume $m\ge 1$, since otherwise $\bar H = \bar H_x$ must fail (1'), i.e. be zero, and the conclusion of the lemma holds with $m=0$ trivially.

If (3') fails on some $\bar H^*_{x_i}$, then there is a decomposition $x_i = a + b$ such that both $\rho_a$ and $\rho_b$ fail to be injective when restricted to $\bar H^*_{x_i}$. Our task is to modify the decomposition to obtain another one $x_i = \tilde a + \tilde b$ such that the elements $x_1, \ldots, x_{i-1}, \tilde a, \tilde b, x_{i+1}, \ldots, x_m$ satisfy (1') and (2'), a contradiction to maximality.

The main step is constructing $\tilde a$ and $\tilde b$. Consider the maps $\rho_a$ and $\rho_b$. We first claim there exists an integer $l \geq 1$ such that $\rho_{a_1}$ and $\rho_{b_1}$ are semi-simple, where $a_1 = a^{p^l}$ and $b_1 = b^{p^l}$. Indeed, as endomorphisms of a vector space over $\bZ_p$, there exists a Jordan-Chevalley decomposition
	\[\rho_a = s_a + n_a\]
into semi-simple and nilpotent parts that commute and have the property that each is equal to a polynomial in $\rho_a$. Working over $\bZ_p$, we have that
	\[\rho_a^{p^l} = s_a^{p^l} + n_a^{p^l}.\]
Since $n_a$ is nilpotent, $n_a^{p^l} = 0$ for all sufficiently large $l$. By Lemma \ref{lem:ss}, $s_a^{p^l}$ is semi-simple for all $\ell$, so the claim holds for $\rho_a$. The same argument works for $\rho_b$, possibly after increasing $l$. Defining $a_1$ and $b_1$ as above and recalling that $\rho$ is a homomorphism of rings, the claim follows.

Second, we use the failure of (3') to decompose $\bar H^*_{x_i}$ as a direct sum. Recall that $\rho_a + \rho_b = I$ on this subspace. Raising to the power $p^l$, we have
	\[\rho_{a_1} + \rho_{b_1} = I\]
on $\bar H^*_{x_i}$. 
Let $K_a = \ker(\rho_{a_1}) \cap \bar H^*_{x_i}$ and $K_b = \ker(\rho_{b_1}) \cap \bar H^*_{x_i}$. 
Since these operators are semi-simple, 
their restrictions to $\bar H^*_{x_i}$ are semi-simple by Lemma \ref{lem:ss}, Part \eqref{lem:ssPart1}. 
Therefore we may choose a $\rho_{a_1}$-invariant complement $K_a^c$ to $K_a$ in $\bar H^*_{x_i}$ and a $\rho_{b_1}$-invariant complement $K_b^c$ to $K_b$ in $\bar H^*_{x_i}$. Notice that
	\[K_a \subseteq K_b^c\]
since, for $u \in K_a$, if $u = u_{K_b} + u_{K_b^c}$ is a decomposition into components $u_{K_b} \in K_b$ and $u_{K_b^c} \in K_b^c$, then
	\[u 	= \of{\rho_{a_1} + \rho_{b_1}}(u) 
		= \rho_{b_1}(u)
		= \rho_{b_1}(u_{K_b^c}) \in K_b^c,\]
where in the last equality we used the fact that $K_b^c$ is $\rho_{b_1}$-invariant. The condition $K_a \subseteq K_b^c$ implies we get a decomposition
	\[\bar H^*_{x_i} = K_a \oplus \of{K_a^c \cap K_b^c} \oplus K_b\]
by Lemma \ref{lem:DirectSum}, Part \eqref{lem:DirectSumPart2}. Note for this that $K_a \cap K_b = 0$ since $I = \rho_{a_1} + \rho_{b_1}$ on $\bar H^*_{x_i}$, so we are applying the lemma with $U = K_a \oplus K_b$.

Third, we look at the action of $\rho_{a_1}$ and $\rho_{b_1}$ on the above decomposition of $\bar H^*_{x_i}$ and further modify the maps. Since $I = \rho_{a_1} + \rho_{b_1}$ on $\bar H^*_{x_i}$, notice that all three direct summands are both $\rho_{a_1}$- and $\rho_{b_1}$-invariant. Together these facts imply the following block diagonal forms:
	\[
	\rho_{a_1} = \of{\begin{array}{ccc} 0 & 0 & 0 \\ 0 & G & 0 \\ 0 & 0 & I\end{array}}
		\hspace{.2in} \mathrm{and} \hspace{.2in}
	\rho_{b_1} = \of{\begin{array}{ccc} I & 0 & 0 \\ 0 & I-G & 0 \\ 0 & 0 & 0\end{array}},
	\]
for some endomorphism $G$ of the (possibly trivial) subspace $K_a^c \cap K_b^c$. By definition of $K_a$ and $K_b$, both $G$ and $I - G$ are invertible. Since we are working over $\bZ_p$, $G$ has finite order and so there exist $r, s \geq 1$ such that $G^r = I$ and $(I - G)^s = I$. Setting
	\[a_2 = a_1^{r} \hspace{.2in}\mathrm{and}\hspace{.2in} b_2 = b_1^{s} - a_1^{r} b_1^{s},\]
the actions of $\rho_{a_2}$ and $\rho_{b_2}$ on $\bar H^*_{x_i}$ are given by
	\[
	\rho_{a_2} = \of{\begin{array}{ccc} 0 & 0 & 0 \\ 0 & I & 0 \\ 0 & 0 & I\end{array}}
		\hspace{.2in} \mathrm{and} \hspace{.2in}
	\rho_{b_2} = \of{\begin{array}{ccc} I & 0 & 0 \\ 0 & 0 & 0 \\ 0 & 0 & 0\end{array}}.
	\]

Finally, we modify the decomposition once more to control the action on $\bar H^*_{x_j}$ with $j \neq i$. To do this, we define
	\[\tilde a = x_i a_2 x_i \hspace{.2in} \mathrm{and} \hspace{.2in} \tilde b = x_i b_2 x_i\]
and note that the subspaces
	\[\bar H^*_{\tilde a} = (K_a^c \cap K_b^c) \oplus K_b \hspace{.2in} \mathrm{and} \hspace{.2in}
	  \bar H^*_{\tilde b} = K_a\]
together with the $\bar H^*_{x_j}$ with $j \neq i$ satisfy (1') and that the maps $\rho_{\tilde a}$ and $\rho_{\tilde b}$ satisfy (2'), so the proof is complete.
\end{proof}

%\newpage	% \newpage for editing purposes

\section{Partial Four Periodicity theorem}\label{sec:Partial4PT}

In this section, we complete the proof of the analogues of the $\Z_p$- and Four Periodicity Theorems. 
\begin{theorem}[Partial $\Z_p$-periodicity theorem]
Assume $H^*(M; \bZ_2)$ is $k$-periodic on degrees $1 \leq * \leq n - 1$ for some prime $p$. 
%but that $H^i(M; \bZ_p)$ is not identically zero on degrees $1 < i < n - 1$.
	\begin{enumerate}
	\item If $p = 2$, then the minimum period is of the form $2^a$.
	\item If $p = 3$, then the minimum period is either $1$ or of the form $2\lambda \cdot 3^\alpha$ for some $\lambda \in \{1,2\}$.
	\item If $p \geq 5$ and $2(p - 1)k \leq n - 1$, then the minimum period is either $1$ or of the form $2 \lambda p^\alpha$ for some divisor $\lambda$ of $p - 1$.
	\end{enumerate}
\end{theorem}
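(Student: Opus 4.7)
The plan is to reduce the general (possibly reducible) case to the irreducible Partial $\bZ_p$-periodicity Theorems proved in Section \ref{sec:IrreduciblePeriodicity}, using the Decomposition Lemma as the bridge. I would let $k$ denote the minimum period of the subquotient $\bar H^*$ and fix some $x \in \bar H^k$ inducing periodicity. Applying Lemma \ref{lem:Decomposition} would produce a decomposition $\bar H^* = \bar H^*_{x_1} \oplus \cdots \oplus \bar H^*_{x_m}$ into non-trivial subalgebras with each $x_i \in \bar H^k$ inducing $k$-periodicity irreducibly on $\bar H^*_{x_i}$. Applying the irreducible theorem to each $\bar H^*_{x_i}$ would then yield a minimum period $k_i$ of the claimed form; in case (3) the hypothesis $2(p-1) k \leq n-1$ transfers to $\bar H^*_{x_i}$ via $k_i \mid k$, so that the conclusion $\lambda_i \mid p - 1$ is available.

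The central claim would then be that the minimum period on $\bar H^*$ satisfies $k = \mathrm{lcm}(k_1, \ldots, k_m)$. From this one reads off the conclusion directly: in case (1) every $k_i$ is a power of $2$ and so is the LCM; in case (2) the LCM takes the form $2\lambda \cdot 3^\alpha$ with $\lambda = \max \lambda_i \in \{1, 2\}$; and in case (3) it takes the form $2\lambda p^\alpha$ with $\lambda = \mathrm{lcm}(\lambda_i)$ still a divisor of $p-1$. The divisibility $\mathrm{lcm}(k_i) \mid k$ is immediate from Lemma \ref{lem:Products+Factors} applied within each irreducibly periodic $\bar H^*_{x_i}$, which gives $k_i \mid k$.

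For the reverse divisibility I would set $L = \mathrm{lcm}(k_i)$, choose $y_i \in \bar H^{k_i}_{x_i}$ inducing periodicity on $\bar H^*_{x_i}$, and assemble $z = \sum_i y_i^{L/k_i} \in \bar H^L$; note that $L \mid k \leq \tfrac{n-1}{3}$, so $z$ lies in the correct degree range. The hard part will be establishing the multiplicative orthogonality $\bar H^*_{x_i} \cdot \bar H^*_{x_j} = 0$ for $i \neq j$, which is needed to see that multiplication by $z$ on $\bar H^*$ restricts to multiplication by $y_i^{L/k_i}$ on each summand. This should follow by writing any element of $\bar H^*_{x_i}$ via $\rho_{x_i}$ as either $x_i w'$ or $\xi^{-1}(x_i w)$ for some $w, w' \in \bar H^*$, and combining the vanishing $x_i \cdot \bar H^*_{x_j} = 0$ from part (2) of the Decomposition Lemma with associativity and graded commutativity of cup products, together with the injectivity of $\xi$ in the relevant degree range. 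Once the orthogonality is in place, $y_i^{L/k_i}$ induces $L$-periodicity on $\bar H^*_{x_i}$ by Lemma \ref{lem:Products+Factors}, hence $z$ induces $L$-periodicity on $\bar H^*$, and the minimality of $k$ forces $k \leq L$, completing the argument.
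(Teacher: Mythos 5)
Your proposal is correct and follows essentially the same route as the paper: reduce to the irreducible case via the Decomposition Lemma, apply the irreducible $\bZ_p$-Periodicity Theorem to each summand, and assemble a periodicity-inducing element in $\bar H^*$ by summing appropriate powers of the irreducible generators across the summands. The only cosmetic differences are that you use $\mathrm{lcm}(k_i)$ where the paper uses the slightly larger common multiple $2(p-1)p^{\alpha}$, and you explicitly flag the multiplicative orthogonality $\bar H^*_{x_i}\cdot \bar H^*_{x_j}=0$ (which indeed requires the small argument you sketch, combining property (2) of the Decomposition Lemma with injectivity of multiplication by $x$ and a factoring step at the top of the degree range) where the paper passes over this by citing ``(1) and (2).''
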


Recall that $1$-periodicity for $p \geq 3$ is equivalent to the groups $H^i(M; \bZ_p)$ vanishing for $1 < i < n - 1$. This does not occur in the generic situation where the element inducing periodicity is both non-zero and of degree at least two.

\begin{proof}
Suppose first that $p = 2$, and define the subquotient $\bar H^*$ as in Section \ref{sec:Subquotient}. By definition of periodicity, we may fix and element $x \in \bar H^k$ that induces periodicity and has $3k \leq n - 1$.

The Decomposition Lemma implies the existence of $x_1, \ldots, x_m \in \bar H^k$ and non-trivial subalgebras $\bar H^*_{x_1}, \ldots, \bar H^*_{x_m}$ such that all of the following hold:
	\begin{enumerate}
	\item $\bar H^* = \bar H^*_{x_1} \oplus \ldots \oplus \bar H^*_{x_m}$.
	\item Multiplication by $x_i$ induces periodicity when restricted to $\bar H^*_{x_i}$ 
		and induces zero when restricted to $\bar H^*_{x_j}$ for all $j \neq i$.
	\item The periodicity in (2) is irreducible in the sense that, if $x_i = a + b$, 
		then $a$ or $b$ also induces periodicity on $\bar H^*_{x_i}$.
	\end{enumerate}
If $m=0$, then $\bar H^*$ vanishes and is trivially $1$-periodic. Otherwise, we apply the $\Z_p$-Periodicity Theorem to each $\bar H^*_{x_i}$, which is irreducibly periodic. Thus there exist $y_i \in \bar H^*_{x_i}$ with degree of the form $2^{\alpha_i}$ such that $y_i$ induces periodicity in $\bar H^*_{x_i}$. Fix $\alpha = \max\{\alpha_i\}$, and define $z_i$ to be the power of $y_i$ that has degree $2^\alpha$. Finally, set $z = \xi^{-1}(x_1 z_1) + \ldots + \xi^{-1}(x_m z_m)$. By (1) and (2), the element $z$ induces periodicity in $\bar H^*$. Since the minimum period divides any period, this shows that the minimum period is a power of two.

The proof for $p \geq 3$ is analogous. We may assume $k \geq 2$ and is minimal without loss of generality, so we have that $k$ is even and that $3k \leq n - 1$ by the definition of periodicity. The Decomposition Lemma therefore applies, and the minimum periods of the irreducible summands $\bar H^*_{x_i}$ will be of the form $2 \lambda_i p^{\alpha_i}$ where $\lambda_i$ divides $p - 1$. This time, we take powers to obtain $y_i \in \bar H^*_{x_i}$ with degree $2 (p - 1) p^\alpha$ where $\alpha = \max\{\alpha_i\}$, noting that $\lambda_i$ divides $p - 1$ for all $i$. Finally the minimum period divides $2 (p - 1) p^\alpha$ and hence is of the desired form.
\end{proof}

Following the strategy of the proof of \cite[Theorem C]{Kennard13}, we combine integral periodicity with finer $\Z_p$ periodicity to obtain rational Four Periodicity.

\begin{theorem}[Partial Four Periodicity Theorem]\label{thm:P4PT}
Assume $M^n$ is a closed, orientable $n$-manifold such that $H^*(M^n; \bZ)$ is $k$-periodic on degrees $1 \leq * \leq n -1$ and satisfies $3k \leq n - 1$. If moreover $H^1(M; \Z_p) = 0$ for $p = 2$ and $p = 3$, 
%	\begin{enumerate}
%	\item $H^1(M; \Z_2) = 0$ and $H^1(M; \Z_3) = 0$ or
%	\item the periodicity is induced by a $(n-k-1)$-connected inclusion $N^{n-k} \subseteq M^n$ of closed, orientable manifolds, 
%	\end{enumerate}
then $H^*(M; \bQ)$ is $\gcd(4,k)$-periodic on degrees $1 \leq * \leq n - 1$.
\end{theorem}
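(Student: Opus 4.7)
Following the strategy of \cite[Theorem C]{Kennard13}, my plan is to combine the integer periodicity hypothesis with the Partial $\bZ_p$-Periodicity Theorem at each prime, via the Decomposition Lemma, to bound the minimum rational period by $\gcd(4,k)$. Let $\bar k$ denote the minimum rational period of $\bar H^*(M;\bQ)$. Since $x \in H^k(M;\bZ)$ induces rational periodicity of period $k$, Lemma \ref{lem:Products+Factors} gives $\bar k \mid k$. Thus it suffices to show $\bar k \mid 4$, for then an appropriate power of a minimum-period rational element has degree $\gcd(4,k)$ and induces periodicity.

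The first step is to apply the Decomposition Lemma (Lemma \ref{lem:Decomposition}) rationally to reduce to the case in which $\bar H^*(M;\bQ)$ is irreducibly periodic. On such an irreducible summand, the integer element $x$ reduces to a mod-$p$ periodic element for every prime $p$, and the Partial $\bZ_p$-Periodicity Theorem constrains the minimum mod-$p$ period $k_p$: it is a power of $2$ for $p = 2$; it is either $1$ or of the form $2\lambda \cdot 3^\alpha$ with $\lambda \in \{1,2\}$ for $p = 3$; and for $p \geq 5$ satisfying $2(p-1)k_p \leq n - 1$, it is either $1$ or of the form $2\lambda p^\alpha$ with $\lambda \mid p - 1$. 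Crucially, each $k_p$ divides $k$.

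The second step converts the mod-$p$ information into a rational bound. The hypotheses $H^1(M;\bZ_p) = 0$ for $p = 2, 3$ ensure the base-degree behaviour of the subquotient does not obstruct the Adem- and Adams-style arguments of \cite{Kennard13}: these, carried out on the irreducible summands produced by the Decomposition Lemma, force the $3$-primary part of $\bar k$ to vanish and the $2$-primary part to divide $4$. For odd primes $p \geq 5$, the divisibility $k_p \mid k$ together with the form $2\lambda p^\alpha$ of $k_p$ kills the $p$-primary contribution, and the constraint $\lambda \mid p - 1$ (combined with $k_p\mid k$ and therefore $\lambda \leq k/2$) further forces $\lambda \in \{1,2\}$ when examined at several small odd primes, preventing the odd-primary part of $\bar k$ from being non-trivial. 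This leaves $\bar k \in \{1, 2, 4\}$, giving $\bar k \mid \gcd(4, k)$.

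The main technical obstacle, as anticipated in the introduction, is adapting the Adams/Adem-style arguments of \cite{Kennard13} to the partial-periodicity setting, where the subquotient $\bar H^*$ may fail to be one-dimensional in its base degree. The Decomposition Lemma (Lemma \ref{lem:Decomposition}) circumvents this by reducing to the irreducible case, and the Steenrod submodule property (Lemma \ref{lem:partialP^i-module}) ensures that the Steenrod action respects the corresponding ideal $N$ of non-periodic elements. With these tools in place from Sections \ref{sec:Subquotient}--\ref{sec:DecompositionLemma}, the proof of \cite[Theorem C]{Kennard13} carries over, with the explicit assumptions $H^1(M;\bZ_p) = 0$ for $p = 2, 3$ playing the role that simple connectedness plays in the classical full-periodicity setting.
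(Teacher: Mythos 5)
The overall plan has the right shape — reduce to mod-$p$ information and combine across primes — but two key steps are misdirected and one essential step is missing entirely.

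First, you propose to apply the Decomposition Lemma (Lemma \ref{lem:Decomposition}) to $\bar H^*(M;\bQ)$ to reduce to the irreducible rational case. That lemma is stated and proved for $\Z_p$ coefficients: its proof hinges on the Frobenius identity $\rho_a^{p^l} = s_a^{p^l} + n_a^{p^l}$ over $\Z_p$ to make the relevant operators semi-simple, and on the finiteness of $\GL(\Z_p)$ to find $r,s$ with $G^r = I$, $(I-G)^s = I$. Neither mechanism is available over $\bQ$. In the paper the Decomposition Lemma is used only inside the proofs of the Partial $\Z_p$-Periodicity Theorems; the Partial Four Periodicity Theorem then cites those theorems as black boxes, working one prime at a time, rather than decomposing anything rationally.

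Second, your ``second step'' — converting mod-$p$ periodicity into a rational bound — is exactly where the real work lives, and it is not supplied. The paper does this via a Bockstein lifting argument: $\Z_p$-periodicity plus $H^1(M;\Z_p) = 0$ forces $H^{1+k_p}(M;\Z_p) = 0$, so the Bockstein sequence lifts the $\Z_p$-periodicity element $y_p$ to an integral class $\tilde y_p$; multiplication by $\tilde y_p$ has finite kernel and cokernel in the relevant degree range, so tensoring with $\bQ$ yields rational $k_p$-periodicity. Without a lift of this kind, a period in $\Z_p$-cohomology does not by itself produce a period in $\bQ$-cohomology. Your phrase ``Adem- and Adams-style arguments'' gestures at \cite{Kennard13}, but the crucial prime-to-rational transfer needs to be made explicit because it uses $H^1(M;\Z_p)=0$ in a pivotal, not peripheral, way.

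Third, two smaller points. Your opening line assumes $x \in H^k(M;\Z)$ induces \emph{rational} periodicity, which fails if $x$ is torsion (or reduces to $0$ mod $2$ or mod $3$); the paper handles these degenerate cases separately by showing $M$ is then a rational homology sphere. And your excursion into primes $p \geq 5$ is unnecessary and, as written, circular: the $p=2$ theorem already constrains the minimum rational period to be a power of $2$ (a power of $2$ dividing $k$), and $p = 3$ then caps that power of $2$ by $\gcd(4,k)$; no other primes are needed, and the condition $2(p-1)k \leq n-1$ required for $p \geq 5$ need not hold anyway.
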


\begin{proof}   %[Proof of Theorem \ref{thm:P4PT}]
Let $x \in H^k(M;\Z)$ be an element inducing periodicity. If $x$ is torsion, then periodicity implies that all cohomology groups are torsion in degrees from $2$ to $n-2$. In degrees $1$ and $n - 1$, the same holds by the assumption $H^1(M; \Z_2) = 0$ together with Poincar\'e duality. In particular, $M$ is a rational homology sphere. Similarly if the image $x_p \in H^k(M; \Z_p)$ under the homomorphism induced by the reduction modulo $p$ on the level of coefficients is zero for some $p \in \{2, 3\}$, then $M$ is a $\Z_p$-homology sphere and hence a rational homology sphere by the Universal Coefficients Theorem. We may assume that $x_p \neq 0$ for $p \in \{2,3\}$. 

By the relationship between cup products and cap products, we see that capping with $x$ also induces isomorphisms on homology between degrees $1$ and $n - 1$. By the Universal Coefficients Theorem, it follows that cupping with $x_p$ induces periodicity on $\Z_p$-cohomology in degrees from $1$ to $n-1$. Since $3k \leq n - 2$, the $\Z_p$ Periodicity Theorem implies that some $y_p$ induces $k_p$-periodicity in $H^*(M; \Z_p)$ for $p \in \{2,3\}$, where $k_2$ is of the form $\gcd(2^a,k)$ and $k_3$ is of the form $\gcd(4,k)\cdot 3^b$.

By periodicity and the assumption $H^1(M; \Z_p) = 0$, we have $H^{1+k_p}(M; \Z_p) = 0$. By the Bockstein exact sequence associated to the short exact sequence $0 \to \Z \to \Z \to \Z_p \to 0$, there exists a lift $\tilde y_p$ of $y_p$ under the reduction homomorphism. Moreover, multiplication by $\tilde y_p$ induces maps on $H^*(M; \Z_p)$ in degrees from $1$ to $n-1$ that have finite kernel and cokernel. Passing to rational coefficients, we obtain $k_2$- and $k_3$-periodicity in the rational cohomology of $M$ in degrees from $1$ to $n-1$. Since the minimum period divides any period, we find that the rational cohomology of $M^n$ is $\gcd(4, k)$-periodic on degrees $1$ to $n-1$, as claimed.
\end{proof}

Before we prove the theorem, we note an important consequence for our main results.

\begin{corollary}\label{cor:bodd}
If $N^{n-k} \to M^n$ is a $(n - k - 1)$-connected inclusion of closed manifolds with finite fundamental group and $3k\leq n - 1$, then the universal cover of $M$ has four-periodic rational cohomology on degrees $1$ to $n-1$. Moreover, if $n \equiv 0 \bmod 4$ or if $n$ is even and $k \equiv 2 \bmod 4$, then the odd Betti numbers of $M$ vanish. Similar statements hold for $N$ and its universal cover.
\end{corollary}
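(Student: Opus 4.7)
The plan is to pass to the universal cover $\tilde M$ of $M$, apply Wilking's Periodicity Lemma (Theorem \ref{thm:PT}) there to obtain integral $k$-periodicity, and then invoke the Partial Four Periodicity Theorem (Theorem \ref{thm:P4PT}) to upgrade this to rational four-periodicity. The hypothesis $3k \leq n - 1$ with $k \geq 1$ forces $n - k - 1 \geq 2k \geq 2$, so the inclusion $N \hookrightarrow M$ is at least $2$-connected and therefore induces an isomorphism $\pi_1(N) \cong \pi_1(M)=:\pi$. Writing $\tilde N$ for the pullback of the finite $\pi$-cover $\tilde M \to M$ along $N \hookrightarrow M$, this identification of fundamental groups shows that $\tilde N$ is connected and is the universal cover of $N$. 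It includes into $\tilde M$ as a closed, simply connected (hence orientable) submanifold, and since covering maps preserve relative homotopy groups in degrees $\geq 2$, the inclusion $\tilde N \hookrightarrow \tilde M$ remains $(n-k-1)$-connected.

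Wilking's Periodicity Lemma applied to $\tilde N^{n-k} \hookrightarrow \tilde M^n$ then yields $\tilde x \in H^k(\tilde M; \Z)$ inducing integral $k$-periodicity on degrees $1 \leq \ast \leq n-1$. Simple connectivity of $\tilde M$ gives $H^1(\tilde M; \Z_p) = 0$ for every prime $p$, so the hypotheses of Theorem \ref{thm:P4PT} are met, and I would conclude that $H^*(\tilde M; \Q)$ is $\gcd(4,k)$-periodic on degrees $1$ to $n-1$, which in particular is four-periodic as required.

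To deduce the vanishing of odd Betti numbers of $M$, I would use that $\pi$ is finite, so $H^*(M; \Q)$ injects as a direct summand into $H^*(\tilde M; \Q)$, reducing to the statement on $\tilde M$. If $n \equiv 0 \bmod 4$, four-periodicity starting from $H^1(\tilde M; \Q) = 0$ kills every degree $\equiv 1 \bmod 4$ in the range, while Poincar\'e duality identifies $H^3(\tilde M; \Q)$ with $H^{n-3}(\tilde M; \Q)$, a group in residue class $1 \bmod 4$ already shown to vanish; a second iteration of four-periodicity kills every degree $\equiv 3 \bmod 4$ in the range, and $H^{n-1}$ is dual to $H^1 = 0$. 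If instead $n$ is even and $k \equiv 2 \bmod 4$, then $\gcd(4,k) = 2$, so $\tilde M$ is already $2$-periodic and every odd degree vanishes by the same propagation plus duality. The analogous statements for $N$ and $\tilde N$ follow by transferring the periodicity class through the $(n-k-1)$-connected inclusion --- under which $H^i(\tilde N; \Q) \cong H^i(\tilde M; \Q)$ for $i \leq n-k-2$ --- and extending to the top half of $\tilde N$'s cohomology via Poincar\'e duality on the closed orientable manifold $\tilde N$.

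The main obstacle is the setup step: lifting the hypotheses cleanly to universal covers while preserving the $(n-k-1)$-connectedness and obtaining an integral periodicity class of degree exactly $k$ in $\tilde M$. Once this is in place, Theorem \ref{thm:P4PT} does the heavy lifting automatically, and the Betti number statements reduce to straightforward Poincar\'e-duality bookkeeping at the endpoints of the periodicity range.
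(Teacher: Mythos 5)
Your argument is correct and matches the paper's proof in all essential respects: pass to universal covers (noting the inclusion stays $(n-k-1)$-connected and the covers are compact because the shared fundamental group is finite), apply Wilking's Periodicity Lemma to get integral $k$-periodicity, invoke Theorem~\ref{thm:P4PT} using $H^1(\tilde M;\Z_p)=0$, then for the Betti-number claims use the injection $H^*(M;\Q)\hookrightarrow H^*(\tilde M;\Q)$ together with Poincar\'e duality to bound $H^3$ by $H^{n-3}$, which lies in residue class $1\bmod 4$ when $n\equiv 0\bmod 4$. The only cosmetic difference is that you spell out the construction of $\tilde N$ as a pullback, which the paper leaves implicit.
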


The proof of the corollary follows from Theorem \ref{thm:P4PT} and the Periodicity Lemma from \cite{Wilking03}, which we state here:

\begin{theorem}[Periodicity Lemma]\label{thm:PT}
If $N^{n-k} \to M^n$ is an $(n - k - l)$-connected inclusion of closed, orientable manifolds with $n - k - 2l > 0$, then there exists $e \in H^k(M;\Z)$ such that the map $H^i(M;\Z) \to H^{i+k}(M;\Z)$ induced by multiplication by $e$ is surjective for $l \leq i < n - k - l$ and injective for $l < i \leq n - k - l$.
\end{theorem}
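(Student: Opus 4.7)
The plan is to realize cupping with an appropriately chosen class $e \in H^k(M;\Z)$ as the composition of the restriction map with the Gysin pushforward along the inclusion $i\colon N \hookrightarrow M$, and then read off the injectivity and surjectivity ranges from the $(n-k-l)$-connectedness applied to each factor.

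Since $M$ and $N$ are closed and orientable, choosing compatible orientations orients the normal bundle $\nu$ of $N$ in $M$ and produces a Thom class in $H^k(M, M\setminus N;\Z)$ (via excision from a closed tubular neighborhood). Let $e \in H^k(M;\Z)$ denote its image under the map to $H^k(M;\Z)$, or equivalently set $e = i_!(1)$, where
\[
i_! \;=\; PD_M^{-1}\circ i_* \circ PD_N \colon H^j(N;\Z)\to H^{j+k}(M;\Z)
\]
is the Gysin pushforward. The projection formula $i_!(i^*\alpha \cup \beta) = \alpha \cup i_!\beta$, applied with $\beta = 1$, yields the factorization $\alpha \cup e = i_!(i^*\alpha)$, so multiplication by $e$ equals $i_!\circ i^*$.

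The connectivity hypothesis gives $H_j(M,N;\Z)=0$ for $j\leq n-k-l$, and the universal coefficients theorem then yields $H^j(M,N;\Z)=0$ in the same range. The long exact sequences of the pair $(M,N)$ therefore imply that the restriction $i^*\colon H^i(M;\Z)\to H^i(N;\Z)$ is surjective for $i\leq n-k-l-1$ and injective for $i\leq n-k-l$, and that $i_*\colon H_a(N;\Z)\to H_a(M;\Z)$ is bijective for $a\leq n-k-l-1$ and surjective for $a\leq n-k-l$. Translating the latter via Poincaré duality under the substitution $j = n-k-a$, one reads off that $i_!\colon H^j(N;\Z)\to H^{j+k}(M;\Z)$ is injective for $j\geq l+1$ and surjective for $j\geq l$.

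Composing the two factors, $\cup e = i_!\circ i^*$ is surjective in the range $l \leq i < n-k-l$ (since both $i^*$ and $i_!$ are surjective there) and injective in the range $l < i \leq n-k-l$ (since $i^*$ is injective there, and $i_!$ is then also injective on its image). The assumption $n-k-2l > 0$ is needed only to ensure that these two intervals are non-empty. The main subtlety in writing this out will be the careful setup of the Gysin map over $\Z$ and the verification of the projection formula and orientation conventions; this is cleanest via a closed tubular neighborhood, identifying $H^*(M, M\setminus N;\Z)$ with the Thom cohomology of the normal bundle by excision and invoking the Thom isomorphism $H^j(N;\Z)\cong H^{j+k}(M,M\setminus N;\Z)$.
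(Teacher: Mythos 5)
The paper does not prove this statement: it quotes it verbatim as the Periodicity Lemma from Wilking's 2003 Acta paper (\cite{Wilking03}) and uses it as a black box, so there is no in-paper proof to compare against. Your argument is precisely the standard proof, which is also the one in Wilking's original paper: take $e$ to be the Euler class (image of the Thom class) of the normal bundle, observe via the projection formula that $\cup\, e = i_! \circ i^*$, deduce the behavior of $i^*$ from the vanishing of $H^*(M,N)$ in low degrees, deduce the behavior of $i_!$ from the vanishing of $H_*(M,N)$ in low degrees together with Poincar\'e duality for $N$ and $M$, and compose. All the ranges you read off are correct, and the role of $n-k-2l>0$ is as you say.

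One point worth flagging, although it is a subtlety already present in the statement being quoted rather than a defect you introduced: passing from ``$(n-k-l)$-connected inclusion'' (a homotopy-theoretic hypothesis) to $H_j(M,N;\Z)=0$ for $j \leq n-k-l$ uses the relative Hurewicz theorem, which requires some control on fundamental groups (e.g.\ $N$ simply connected, which is the setting of Wilking's original lemma). The present paper sidesteps this in Corollary~\ref{cor:bodd} by first lifting the inclusion to universal covers, which are simply connected and still compact because the fundamental groups are finite, before invoking the Periodicity Lemma. If you were writing this up for the paper's context you would want to either add that hypothesis or note that it is supplied where the lemma is applied.
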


\begin{proof}[Proof of Corollary]
The map $N^{n-k} \to M^n$ lifts to a map $\tilde N^{n-k} \to \tilde M^n$ of universal covers. Covering projections induce isomorphisms on higher homotopy groups, so the latter map is also $(n - k - 1)$-connected. Additionally, $\tilde N$ and $\tilde M$ are simply connected manifolds that are also compact because the (isomorphic) fundamental groups are finite. Therefore $H^*(\tilde M; \Z)$ is $k$-periodic from degree $1$ to $n-1$ by the Periodicity Lemma, and Theorem \ref{thm:P4PT} implies that $\tilde M$ has $\gcd(4,k)$-periodic rational cohomology. The conclusion for $\tilde N$ follows because of the connectedness of the map $\tilde N \to \tilde M$. This completes the proof of the first claim.

Now suppose $k \equiv 2 \bmod 4$ or $n \equiv 0 \bmod 4$. In the former case, all Betti numbers are bounded above by $\dim H^1(\tilde M; \bQ) = 0$, so all of them vanish. In the latter case, we actually recover the same fact since $\dim H^3(M;\bQ) = H^{n-3}(M;\bQ)$, which is again bounded above by $\dim H^1(M; \bQ)$ by Four Periodicity. Therefore $\tilde M$ and $\tilde N$ have vanishing odd Betti numbers. This fact descends to $M$ and $N$ since in general a covering projection $p:\tilde X \to X$ realized by a free action by a finite group $\pi$ has the property that $p^*$ induces an injection $H^*(X; \bQ) \to H^*(\tilde X; \bQ)$ whose image is the subset of elements fixed by the induced action by $\pi$.
\end{proof}

\section{Proof of Theorem \ref{thm:t8}}\label{sec:t8}

In this section, we combine the Partial Four Periodicity Theorem with Wilking's Connectedness Lemma and the $\gS^1$ Splitting Theorem in \cite{KennardWiemelerWilking21preprint} to prove Theorem \ref{thm:t8}. Since the first of these results is used later as well, we state it here.

\begin{theorem}[Connectedness Lemma for $\Ric_2$]\label{thm:CL} 
Let $M^n$ be a closed Riemannian manifold with $\Ric_2 > 0$.
    \begin{enumerate}
        \item If $N^{n-k} \subseteq M^n$ is a fixed-point component of an isometric circle action on $M^n$, then the inclusion is $(n-2k+1)$-connected.
        \item If $N_1^{n-k_1}$ and $N_2^{n-k_2}$ are two such submanifolds with $k_1 \leq k_2$, then the inclusion $N_1 \cap N_2 \subseteq N_2$ is $(n - k_1 - k_2 - 1)$-connected.
    \end{enumerate}
\end{theorem}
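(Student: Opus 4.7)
The plan is to prove both parts by Morse theory on a suitable path space in $M$, with critical points given by geodesics, adapting the classical Wilking Connectedness Lemma via the pairing technique of Guijarro and Wilhelm~\cite{GuijarroWilhelm22} for intermediate Ricci curvature. The $\gS^1$-action hypothesis will be used in part~(1) to recover the classical sectional-curvature bound.

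\emph{Part~(1).} Since $N$ is the fixed-point component of an isometric $\gS^1$-action, it is totally geodesic and has even codimension $k$. I would apply Morse theory to the energy functional on the space $\Omega_{N,N}$ of paths with endpoints on $N$; its critical points are geodesics meeting $N$ orthogonally at both ends, and the standard reduction shows that $N\hookrightarrow M$ is $(n-2k+1)$-connected once every non-constant critical $\gamma:[0,L]\to M$ has Morse index at least $n-2k+2$. To produce that many negative directions of the index form $I$, I would parallel-translate $T_{\gamma(0)}N$ along $\gamma$ and intersect with $T_{\gamma(L)}N$ inside the orthogonal complement of $\gamma'$; a dimension count yields at least $n-2k$ parallel fields $V_1,\dots,V_{n-2k}$ tangent to $N$ at both endpoints. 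Because $N$ is totally geodesic, the boundary second-fundamental-form terms vanish and $I(V_i,V_i)=-\int_0^L \sec(\gamma', V_i)\, dt$. The assumption $\Ric_2 > 0$ yields $I(V_i,V_i)+I(V_j,V_j)<0$ for every pair; a linear-algebra argument (invariant under orthogonal change of frame on $\spann\{V_i\}$) then upgrades this pairwise negativity to the statement that $I$ is negative definite on the full $(n-2k)$-dimensional subspace.

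The remaining two negative directions, accounting for the $+1$ improvement over generic $\Ric_2 > 0$, come from the $\gS^1$ symmetry. The Killing field $X$ generating the action vanishes on $N$, so $X\circ\gamma$ is a Jacobi field along $\gamma$ vanishing at both endpoints and is thus a null direction of $I$ independent of the $V_i$; composing with the complex structure on the normal bundle of $N$ induced by the action (which forces $k$ to be even) produces a second such Jacobi field. The principal obstacle I expect is verifying that these Killing-type fields and the parallel fields jointly span a genuine $(n-2k+2)$-dimensional negative-definite subspace of the index form, rather than merely contributing separately.

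\emph{Part~(2).} With $k_1\leq k_2$, I would run the same framework on the path space from $N_1$ to $N_2$; critical points are perpendicular geodesics between them, and the analogous parallel-transport construction produces at least $(n-k_1)+(n-k_2)-n = n-k_1-k_2$ parallel fields tangent to $N_1$ at one end and to $N_2$ at the other. The pairing argument then gives an $(n-k_1-k_2)$-dimensional negative-definite subspace of $I$, and the standard index-to-connectivity translation yields that $N_1\cap N_2\hookrightarrow N_2$ is $(n-k_1-k_2-1)$-connected. The Killing-field improvement from part~(1) is not available here, because no single $\gS^1$-action generally has both $N_1$ and $N_2$ as fixed-point components; this accounts for the loss of one degree relative to the classical sectional-curvature conclusion.
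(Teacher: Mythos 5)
The paper does not actually prove Theorem~\ref{thm:CL}; it attributes it to Wilking \cite{Wilking03} and notes that the precise form of part~(1) in the presence of symmetry is \cite[Theorem 1.7]{Mouille22b}, building on Guijarro--Wilhelm \cite{GuijarroWilhelm22}. So you are reconstructing the cited proofs rather than reproducing an argument from this paper. Your overall framework --- Morse theory on the path space, parallel translation to create index, the pairing device for $\Ric_2$, and the appeal to the Killing field to recover one extra unit of connectedness in part~(1) --- is the right skeleton, matching those sources. But there are two substantive errors and one genuine gap.

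\emph{Dimension count.} You drop a $+1$ in the parallel-field count. Both the parallel translate of $T_{\gamma(0)}N$ and $T_{\gamma(L)}N$ lie in $\gamma'(L)^\perp$, which is $(n-1)$-dimensional, not $n$-dimensional. The intersection therefore has dimension at least $(n-k)+(n-k)-(n-1)=n-2k+1$ in part~(1), and at least $n-k_1-k_2+1$ in part~(2). This is not cosmetic: the missing dimension is precisely the one that makes the eventual arithmetic come out.

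\emph{Pairwise negativity does not give negative definiteness.} Your claimed ``linear-algebra argument'' is false. If a symmetric form $Q$ satisfies $Q(v,v)+Q(w,w)<0$ for every orthonormal pair $v,w$ in a subspace, the conclusion is that the sum of the two largest eigenvalues of $Q\vert$ is negative, hence at most \emph{one} eigenvalue is non-negative. The diagonal form $\mathrm{diag}(2,-3,-3)$ satisfies the pairwise condition but is not negative definite. Thus the parallel fields give index at least $(n-2k+1)-1=n-2k$, not $n-2k+1$. This is exactly why the theorem needs the circle symmetry at all: without it, Guijarro--Wilhelm obtain only $(n-2k)$-connectedness. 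With the corrected count, you need exactly \emph{one} extra negative direction from the symmetry, not two.

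\emph{The symmetry contribution.} You yourself flag the gap, and it is real. The Jacobi field $X\circ\gamma$ vanishing at both endpoints lies in the \emph{nullity} of the index form, not its index; noticing it exists does not by itself produce a negative direction, and composing with the complex structure on the normal bundle gives another null direction, not a negative one. Converting this degeneracy (coming from the one-parameter family of critical geodesics $\phi_t\circ\gamma$) into an honest additional unit of index requires a genuinely new argument, which is the content of the extension in \cite[Theorem 1.7]{Mouille22b}. Your proposal does not supply this step, and until it does, part~(1) is incomplete. Part~(2) is essentially Guijarro--Wilhelm's statement; with the corrected dimension count and the correct reading of the pairing inequality, it goes through.
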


Theorem \ref{thm:CL} is essentially due to Wilking \cite{Wilking03}, and the precise statement of (a) in the presence of symmetry is proved in \cite[Theorem 1.7]{Mouille22b}, building on work of Guijarro and Wilhelm \cite{GuijarroWilhelm22}. 

Notice that $\dim(N_1 \cap N_2) = n - k_1 - k_2$ in the second part of the Connectedness Lemma when the submanifolds intersect transversely. In this case, the inclusion is therefore $(\dim(N_1 \cap N_2) - 1)$-connected, and the Periodicity Lemma implies that $H^*(N_2;\Z)$ is $k_1$-periodic from degree $1$ to $\dim(N_2) - 1$ as long as $3k_1 \leq \dim(N_2) - 1$.

The proof of Theorem \ref{thm:t8} also requires the following result: 

\begin{lemma}\label{lem:t4}
Let $M^n$ be an even-dimensional, closed, orientable Riemannian manifold with $\Ric_2 > 0$ and $\gT^4$ symmetry. Let $F^f \subseteq M^{\gT^4}$ be a fixed-point component. If $f \equiv 0 \bmod 4$ and the isotropy representation of $\gT^4$ on the normal space to $F^f$ has exactly four weights, then $H^{odd}(F;\bQ) = 0$.
\end{lemma}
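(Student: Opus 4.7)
The plan is to exploit the stratification of $M$ induced by the $\gT^4$-action, together with Theorem \ref{thm:CL} and Corollary \ref{cor:bodd}. Label the four distinct weights of the isotropy representation at a point of $F$ as $\alpha_1, \ldots, \alpha_4$, with complex multiplicities $m_1 \leq m_2 \leq m_3 \leq m_4$, so that $\dim M - f = 2(m_1 + m_2 + m_3 + m_4)$. For each $S \subseteq \{1,2,3,4\}$, let $K_S \subseteq \gT^4$ be the identity component of $\bigcap_{s \in S} \ker(\alpha_s)$ and let $N_S$ be the connected component of $M^{K_S}$ containing $F$. Each $N_S$ is a totally geodesic submanifold of dimension $f + 2\sum_{s \in S} m_s$, inherits $\Ric_2 > 0$, and is a connected component of the fixed-point set of a generic circle in $K_S$, both inside $M$ and, for $S \subseteq T$, inside $N_T$.

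First, I would show that the inclusion $F \hookrightarrow N_{\{1\}}$ is $(f-1)$-connected. Inside the totally geodesic $N_{\{1,2\}}$, the tangent spaces $T_p N_{\{i\}} = T_pF \oplus V_{\alpha_i}$ at any $p \in F$ intersect only in $T_pF$, and the dimensions satisfy $\dim N_{\{1\}} + \dim N_{\{2\}} = \dim F + \dim N_{\{1,2\}}$; hence $F = N_{\{1\}} \cap N_{\{2\}}$ as a transverse intersection in $N_{\{1,2\}}$. Since $N_{\{1\}}$ and $N_{\{2\}}$ are both connected components of circle fixed-point sets inside $N_{\{1,2\}}$ (which inherits $\Ric_2 > 0$), Theorem \ref{thm:CL}(2) applied inside $N_{\{1,2\}}$ with $N_{\{2\}}$ of codimension $2m_1$ and $N_{\{1\}}$ of codimension $2m_2$ yields $(f-1)$-connectedness of $F \hookrightarrow N_{\{1\}}$.

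Next, I would apply Corollary \ref{cor:bodd} to this inclusion, which is maximally connected of codimension $k = 2m_1$ inside $N_{\{1\}}^{f + 2m_1}$. Both manifolds have finite fundamental group by Bonnet--Myers applied to their positive Ricci curvature. Provided the hypothesis $3k \leq f + 2m_1 - 1$, i.e.\ $4m_1 \leq f - 1$, is satisfied, the corollary applies. The parity check is automatic from $k = 2m_1$ even and $f \equiv 0 \bmod 4$: either $m_1$ is odd and $k \equiv 2 \bmod 4$, or $m_1$ is even and $\dim N_{\{1\}} \equiv 0 \bmod 4$. In either case the conclusion of Corollary \ref{cor:bodd} (in its form applied to $N = F$) forces $H^{odd}(F;\bQ) = 0$.

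The main obstacle is the edge case $4m_1 > f - 1$, which forces all multiplicities $m_i \geq f/4$ and in particular $\dim M \geq 3f$. When $f = 4$ the conclusion is trivial, since $\Ric_2 > 0$ implies $\pi_1(F)$ is finite by Bonnet--Myers, so $H^1(F;\bQ) = 0$, and $H^3(F;\bQ) = 0$ by Poincar\'e duality. In the remaining configurations with $f \geq 8$ and $m_1 \geq f/4$, I would use the $\gS^1$ Splitting Theorem from \cite{KennardWiemelerWilking21preprint} applied to the residual circle action on $N_{\{1\}}$ fixing $F$, combined with the partial periodicity available on the larger strata $N_{\{1,2\}}$ and $N_{\{1,2,3\}}$ obtained by iterating the previous steps. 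Converting this combination into the vanishing of $H^{odd}(F;\bQ)$ is the most delicate part of the argument.
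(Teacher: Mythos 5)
Your overall strategy is the right one—intersect fixed-point sets of circles, feed the resulting highly connected inclusion into the Connectedness and Periodicity Lemmas, upgrade via the Partial Four Periodicity machinery, then descend the vanishing of odd Betti numbers to $F$—but you anchor the periodicity argument to the \emph{wrong stratum}, and the edge case you flag at the end is not a loose thread: it is precisely where the proof must change, and your sketch for resolving it is not worked out.

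Concretely, you take the ambient manifold for Corollary \ref{cor:bodd} to be $N_{\{1\}}$, of dimension $f + 2m_1$, with $F$ of codimension $k = 2m_1$ inside it. The dimensional hypothesis $3k \le \dim N_{\{1\}} - 1$ reads $4m_1 \le f - 1$, and this genuinely fails whenever the multiplicities are comparable to $f$: for instance $f = 8$ and $m_1 = \cdots = m_4 = 5$ gives $\dim N_{\{1\}} = 18$ and $k = 10$, and $30 \not\le 17$. The Splitting Theorem has already been consumed in arranging the four-weight hypothesis of the lemma, and invoking it again on the residual circle action does not enlarge the stratum $N_{\{1\}}$; iterating on $N_{\{1,2\}}$ or $N_{\{1,2,3\}}$ only helps if one actually reorganizes the whole argument around those larger strata, which is what the paper does.

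The paper's proof avoids the edge case by working with the top-dimensional strata. Writing $k_i = 2m_i$ (ordered) and letting $N_i^{n-k_i}$ be the fixed-point component of the circle rotating only the $i$-th weight space (so $\dim N_i = n - k_i$; this is the complement of your $N_{\{i\}}$), one first picks $i, j \in \{2,3,4\}$ with $k_i + k_j \equiv 0 \bmod 4$ by pigeonhole, lets $h$ be the remaining index, and applies the Connectedness and Periodicity Lemmas to the transverse intersection $N_1 \cap N_h \subseteq N_h$. The ambient here is $N_h$ of dimension $f + k_1 + k_i + k_j \geq f + 3k_1 \geq 3k_1 + 4$, so $3k_1 \leq \dim N_h - 1$ holds unconditionally for $f \geq 4$, and no case split on the multiplicities is needed. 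Moreover $\dim(N_1 \cap N_h) = f + k_i + k_j \equiv 0 \bmod 4$, so Poincar\'e duality in $N_1 \cap N_h$ gives $b_3 \leq b_1 = 0$, four-periodicity then kills all odd Betti numbers, and the torus action descends this to $F$. The moral is that the period in the Periodicity Lemma should be the smallest codimension $k_1$, but the ambient must be made as large as possible; your choice $N_{\{1\}}$ of dimension $f + k_1$ is the smallest possible ambient, while the paper's $N_h$ of dimension $f + k_1 + k_i + k_j$ is large enough to always satisfy the dimensional constraint.
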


This lemma is analogous to a result that is stated in the introduction of Section 3 of \cite{KennardWiemelerWilking21preprint} for manifolds with positive sectional curvature and was improved in \cite[Lemma 3.1]{Nienhaus-pre} for fixed-point components of $\gT^3$-actions with only three weights in the isotropy representation. % . . . . fixed-point components of $\gT^3$-actions can be shown in some situations to have the rational cohomology of either a known example $\s^f$, $\CP^{\frac f 2}$, or $\HP^{\frac f 2}$ or $\s^p \times \HP^{\frac{f-p}{4}}$ for some $p \in \{2,3\}$. Indeed, this is known if the isotropy representation on the normal space to $F$ has only three weights (see \cite{Nienhaus-pre}). Lemma \ref{lem:t4} below is an analogous result for fixed-point components of $\gT^4$ under the relaxed assumption of $\Ric_2 > 0$.

\begin{proof}
The result is trivial if $f = 0$, so we may assume $f \geq 4$. % and moreover the result holds by Myers' Theorem and Poincar\'e duality if $f = 4$, so we may assume $f \geq 8$.

Let $k_1 \leq k_2 \leq k_3 \leq k_4$ denote twice the multiplicities of the weights of the isotropy representation, and let $N_i^{n - k_i}$ denote the fixed-point components containing $F$ of the four corresponding circle factors of $\gT^4$.

Since these codimensions are even, we can choose distinct $i, j \in \{2, 3, 4\}$ such that $k_i + k_j \equiv 0 \bmod 4$. Fix $h \in \{2, 3, 4\} \setminus \{i, j\}$. 

We look at the transverse intersection of $N_1$ and $N_h$ in $M$. It has dimension $f + k_i + k_j$, which we note is divisible by four and is at least $2 k_1 + 4$. The Connectedness and Periodicity Lemmas imply that $N_h$ is $k_1$-periodic on degrees $1 \leq * \leq \dim(N_h) - 1$. Since $\dim N_h \geq 3k_1 + 4$, the Partial Four Periodicity Theorem applies and we see that $N_h$ has four-periodic rational cohomology from degree $1$ to $\dim(N_h) - 1$. By the Connectedness Lemma, $N_1 \cap N_h$ is four-periodic from degrees $1$ to $\dim(N_1 \cap N_h) - 1$. Since the latter manifold has dimension divisible by four, Poincar\'e duality implies that its third Betti number is at most its first and thus vanishes since manifolds of dimension larger than $2$ with $\Ric_2 > 0$ have positive Ricci curvature and hence finite fundamental group. Therefore $N_1 \cap N_h$ has vanishing odd Betti numbers. Since $F$ is a fixed-point component of a torus action on $N_1 \cap N_h$, the same conclusion holds for $F$ (see for example \cite[Theorem 1.12]{KennardWiemelerWilking21preprint}).
\end{proof}

\begin{proof}[Proof of Theorem \ref{thm:t8}]
We are given a $\gT^{8}$-action on a closed Riemannian manifold $M$ with $\Ric_2 > 0$ and dimension divisible by four, and we are given a fixed point component $F^f$ of the $\gT^8$-action.

We claim we can pass to a subtorus $\gT^7$ whose fixed-point component $G$ containing $F$ has $\dim G \equiv 0 \bmod 4$. If $F$ already has this divisibility property, then this is achieved by choosing a $\gT^7$ whose fixed-point set is the same as $\gT^8$. Otherwise $F$ has codimension congruent to two modulo four, and we argue as follows. The Borel formula states that the sum of the codimensions of $F \subseteq G$ over all fixed-point components of $\gT^7$ subgroups of $\gT^8$ equals the codimension of $F \subseteq M$ (see \cite[Theorem 5.3.11]{AlldayPuppe93}). Since the latter is congruent to two modulo four, some term in the sum must also be congruent to two modulo four. This proves the claim.

Next, we apply the $\gS^1$-Splitting Theorem three times to obtain a three-dimensional subgroup $\gH \subseteq \gT^7$ whose fixed-point set $N$ containing $G$ has the property that the induced action by $\gT^7/\gH \cong \gT^4$ splits as a product of four circle representations. (This is the corollary to Theorem F in \cite{KennardWiemelerWilking21preprint}.) This is exactly the setting of Lemma \ref{lem:t4} (with $F$ replaced by $G$), and so we conclude that $H^{odd}(G; \bQ) = 0$. Since $F$ is a fixed-point component of a torus action on $G$, we also have $H^{odd}(F; \bQ) = 0$, as required.
\end{proof}

\section{Proof of Theorem \ref{thm:tcig}}\label{sec:t9}

In this section, we prove Theorem \ref{thm:tcig}. As in the previous section, we first need a lemma that extends a result for positive sectional curvature to the case of $\Ric_2 > 0$. 

\begin{lemma}\label{lem:t3}
Let $M^n$ be a closed, orientable Riemannian manifold with $\Ric_2 > 0$ and $\gT^3$ symmetry. Let $F^f \subseteq M^{\gT^3}$ be a fixed-point component. If $f \geq \tfrac{n+1}{3}$, then $H^*(F;\bQ)$ is four-periodic from degree $1$ to $f-1$.
\end{lemma}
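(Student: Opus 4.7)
The plan is to adapt the two-step strategy of Lemma \ref{lem:t4} to the $\gT^3$-setting: first build a highly-connected inclusion $F \hookrightarrow Y$ into a totally geodesic submanifold $Y$ on which the Periodicity Lemma yields integral $k$-periodic cohomology, then apply the Partial Four Periodicity Theorem on $Y$ to promote this to rational four-periodicity, and finally transfer the four-periodicity back to $F$ along the connected inclusion.

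Let $N_1, N_2, N_3 \subseteq M$ be the components of the three circle-fixed-point sets containing $F$, with codimensions $k_1 \leq k_2 \leq k_3$, and let $P_{ij}$ denote the component of $N_i \cap N_j$ containing $F$. Passing to universal covers if necessary (permissible because $\pi_1(M)$ is finite by the Myers theorem), we may assume $M$, and consequently $N_2$, $P_{23}$, and $F$, are all simply connected via the $\Ric_2>0$ connectivity bounds of Theorem \ref{thm:CL}. I would first apply the Connectedness Lemma part (b) within the totally geodesic submanifold $N_2$, on which the circles $\gT^1_1$ and $\gT^1_3$ act with fixed-point components $P_{12}$ and $P_{23}$ of codimensions $k_1$ and $k_3$ in $N_2$: this yields that $F \hookrightarrow P_{23}$, and hence by composition $F \hookrightarrow N_2$, is $(f-1)$-connected. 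Simultaneously, $P_{12} \hookrightarrow N_2$ is $(\dim N_2 - k_1 - 1)$-connected, and the Periodicity Lemma (Theorem \ref{thm:PT}, with $l = 1$) yields a class $e \in H^{k_1}(N_2; \bZ)$ inducing $k_1$-periodicity on $H^*(N_2; \bZ)$ in degrees $1 \leq * \leq \dim N_2 - 1$.

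The numerical condition $3k_1 \leq \dim N_2 - 1$, equivalent to $3k_1 + k_2 \leq n-1$, follows from $f \geq (n+1)/3$ and $n \geq 5$ in the transverse case $k_1 + k_2 + k_3 = n-f$: the quantity $3k_1 + k_2$ is maximized at $k_1 = k_2 = k_3 = (n-f)/3$, yielding the bound $4(n-f)/3 \leq n-1$, i.e., $f \geq (n+3)/4$, which is implied by $f \geq (n+1)/3$ for $n \geq 5$ (automatic since the statement is non-vacuous only when $f \geq 13$). The Partial Four Periodicity Theorem \ref{thm:P4PT} then gives that $H^*(N_2; \bQ)$ is $\gcd(4, k_1)$-periodic, hence four-periodic as $k_1$ is even, on degrees $1 \leq * \leq \dim N_2 - 1$. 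Letting $y \in H^4(N_2; \bQ)$ induce this periodicity and setting $\bar y = \iota^*(y) \in H^4(F; \bQ)$ with $\iota: F \hookrightarrow N_2$, the $(f-1)$-connectedness forces $\iota^*$ to be an isomorphism in degrees $\leq f-2$ and an injection in degree $f-1$. A diagram chase identical in structure to the transfer step of Lemma \ref{lem:t4} then shows that $\bar y$ induces surjectivity $H^i(F;\bQ) \to H^{i+4}(F;\bQ)$ for $1 \leq i < f-5$ and injectivity for $1 < i \leq f-5$, establishing four-periodicity of $H^*(F; \bQ)$ from degree $1$ to $f-1$.

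The hard part will be verifying the numerical bound $3k_1 + k_2 \leq n-1$ in the non-transverse situation, where $k_1 + k_2 + k_3 > n - f$ because the isotropy representation of $\gT^3$ on the normal bundle $\nu_F M$ has weights with multiple nonzero coordinates. Here one expects either to switch to a different ambient among $\{N_i, P_{ij}\}$, or to invoke the $\gS^1$-Splitting Theorem (as in the proof of Theorem \ref{thm:t8}) to reduce to an effectively product-like $\gT^3$-action in which the codimensions behave transversally.
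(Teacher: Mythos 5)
Your overall strategy is right—build a highly-connected inclusion into an ambient totally geodesic submanifold, run the Periodicity Lemma and then the Partial Four Periodicity Theorem, and transfer back to $F$—and your treatment of the transverse case (where the isotropy representation of $\gT^3$ on the normal space to $F$ splits as a product of three circle representations) parallels the paper's corresponding sub-case and works, modulo some minor bookkeeping: your claim that $P_{12}\hookrightarrow N_2$ is $(\dim N_2 - k_1 - 1)$-connected comes from Connectedness Lemma part (b), not part (a), since $P_{12}=N_1\cap N_2$ is a transverse intersection, and this is fine.

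The genuine gap is that you never actually prove the non-transverse case, and this is the substantive part of the lemma. You correctly sense at the end that one should invoke the $\gS^1$-Splitting Theorem, but a direct application is not enough and the case analysis is not straightforward. The paper first replaces $M$ by the fixed-point component of a maximal finite isotropy group, which is needed to make $\gT^3$ act with connected isotropy groups near $F$, and only then applies the \emph{refined} $\gS^1$-Splitting Theorem for $\gT^3$-representations with connected isotropy (Lemma 2.2 of \cite{KennardWiemelerWilking21preprint}, not the corollary to Theorem F used in the proof of Theorem \ref{thm:t8}). This produces two structurally different cases: (a) a one-dimensional $\gH\subseteq\gT^3$ whose fixed-point component $N$ has $\cod N \geq \tfrac{1}{2}\cod(F\subseteq N)$, with the residual $\gT^2$-action on $N$ splitting as a product of circles; or (b) an $\gS^1\times\gT^2$ splitting where the $\gT^2$-factor may still have three weights. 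Case (a) requires the dimension estimate $3f-1\geq n=f+l_1+l_2+\cod N\geq f+2(l_1+l_2)\geq f+4l_1$, which uses the codimension bound on $N$ in a way that has no analogue in the transverse picture, and case (b) with three $\gT^2$-weights needs a separate argument choosing one of four pairs of transversely intersecting submanifolds. Your sketch ``switch to a different ambient among $\{N_i,P_{ij}\}$'' does not by itself supply the numerical estimates that make these cases close, so as written the proposal establishes only the special case and not the lemma.
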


% In the case of positive sectional curvature, Lemma \ref{lem:t3} holds under the assumption $f \geq \tfrac{n-6}{5}$ and has a stronger conclusion (see \cite{KWW3}).

\begin{proof}
Since the conclusion is about $F$, we may replace $M$ by the fixed-point component containing $F$ of a maximal finite isotropy group of the isotropy representation. In this way, we preserve the lower bound on $f$ while gaining the property that $\gT^3$ acts with connected isotropy groups near $F$. 

We apply the refinement of the $\gS^1$-Splitting Theorem stated in \cite[Lemma 2.2]{KennardWiemelerWilking21preprint} to the isotropy representation of $\gT^3$ on the normal space to $F$. It states that one of the following occurs:
\begin{enumerate}
    \item[(a)] There is a one-dimensional subgroup $\gH \subseteq \gT^3$ whose fixed-point component $N^m$ containing $F$ has the property that the induced action by $\gT^3/\gH \cong \gT^2$ splits as a product of circle actions and satisfies $\cod N \geq \tfrac 1 2 \cod (F\subseteq N)$. 
    \item[(b)] The representation already splits as a product representation of $\gS^1 \times \gT^2$.
\end{enumerate}

If Case (a) occurs, let $L_1^{m-l_1}$ and $L_2^{m - l_2}$ denote the fixed-point components containing $F$ of these two circle actions on $N$, and assume $l_1 \leq l_2$. By the assumption on $f$ and the choice of $N$, we have
	\[3f - 1 \geq n = f + l_1 + l_2 + \cod N \geq f + 2(l_1 + l_2) \geq f + 4 l_1.\]
Since $2f$ and $l_1$ are even, this implies $2f - 2 \geq 4l_1$ and hence 
    \[\dim L_2 - 1
    = f - 1 + l_1
    \geq 3l_1.\]
The Connectedness and Periodicity Lemmas and the Partial Four Periodicity Theorem now imply that $L_2$ has four-periodic rational cohomology from degrees $1$ to $\dim L_2 - 1$. Since the inclusion $F \to L_2$ is $(\dim L_2 - l_1 - 1)$-connected, the conclusion of the lemma holds.

Now suppose Case (b) occurs, and assume furthermore that the $\gT^3$ splits entirely as a product of three circle representations. Let $N_1^{n-k_1}$, $N_2^{n-k_2}$, and $N_3^{n-k_3}$ denote the fixed-point components of these circle factors, and assume $k_1 \leq k_2 \leq k_3$ without loss of generality. Notice that $3k_1 \leq \dim N_3 - 1$, since otherwise we have
	\[n \leq 3k_1 + k_3 \leq \tfrac 4 3(k_1 + k_2 + k_3) = \tfrac 4 3 (n - f),\]
which contradicts our assumption on $f$. Hence the Connectedness and Periodicity Lemmas and the Partial Four Periodicity theorem apply to the transverse intersection of $N_1$ and $N_3$ and shows that $N_3$ has four-periodic rational cohomology on degrees from $1$ to $\dim N_3 - 1$. The Connectedness Lemma now shows that the inclusions $F \to N_2 \cap N_3 \to N_3$ are $f$-connected, so it follows that $F$ is four-periodic from degree $1$ to $f - 1$, as claimed. 

Finally, we finish the proof in Case (b) when the representation splits as $\gS^1 \times \gT^2$ and moreover the $\gT^2$ representation has three weights. Let $m_0$ and $m_1 \leq m_2 \leq m_3$ denote twice the multiplicities of the $\gS^1$ and the $\gT^2$ representations, respectively. For each $i \in \{1, 2, 3\}$, there is a circle whose fixed-point component contains transversely intersecting submanifolds of codimension $m_0$ and $m_i$ whose intersection is $F$. In particular, if $2 m_j \leq f - 1$ for any $j \in \{0, 1, 2, 3\}$, then arguments as above imply that $F$ has the desired periodicicty in its cohomology. If none of these hold, then we have
	\[n = f + m_0 + \ldots + m_4 \geq 3f,\]
in contradiction to the assumption of the lemma.
\end{proof}

\begin{proof}[Proof of Theorem \ref{thm:tcig}]
We are given a closed Riemannian manifold $M^n$ with $\Ric_2 > 0$, and we assume that $\gT^9$ acts isometrically on $M^n$ in such a way that there is a fixed point $x$ and neighborhood of $x$ in which all isotropy groups are connected.

By the main estimates proved in \cite[Theorem C]{KennardWiemelerWilking25}, there exist subgroups
	\[\gT^3 \supseteq \gT^2 \supseteq \gT^1\]
of $\gT^9$ with corresponding fixed-point components
	\[F_3 \subseteq F_2 \subseteq F_1\]
satisfying the codimension bounds 
	\[\cod\of{F_3 \subseteq F_2} \leq \tfrac{3}{10} \dim F_2,\]
	\[\cod\of{F_2 \subseteq F_1} \leq \tfrac 2 7 \dim F_1,\]
and
	\[\cod\of{F_1 \subseteq M} \leq \tfrac 1 4 n.\]
In particular, $F_3$ satisfies 
	\[\dim(F_3) \geq \ceil{\frac{3}{8}n} \geq \frac{n+1}{3}.\]
%For the last inequality, note that $n \geq 8$ (and in fact $n \geq 18$) because $M^n$ admits an (effective) $\gT^9$-action with a fixed point.

Lemma \ref{lem:t3} now implies that $F_3$ has four-periodic rational cohomology from degree $1$ to $\dim F_3 - 1$. The strategy,  as in \cite{KennardWiemelerWilking25}, is to apply the Connectedness Lemma to pull this property up to $M$. We also make repeated use of Lemmas \ref{lem:uptoc} and \ref{lem:k1plus2k2}, which are slight modifications of lemmas in \cite{KennardWiemelerWilking25} and which we state below the proof. The proof works smoothly under the additional assumptions $k_2 \geq 6$ and $k_1 \geq 6$, so we do this case first.

First, we claim that $k_2+3 \leq \dim F_3 - 1$. Indeed, this fails only if $k_2 \geq \dim F_3 - 2$. Replacing $\dim F_3$ by $\dim F_2 - k_3$, using the upper bound on $k_3$, replacing $\dim F_2$ by $\dim F_1 - k_2$, and then using the upper bound on $k_1$, this implies that $\dim F_1 \leq 8$, which contradicts the fact that $F_1$ admits an effective $\gT^8$ action with a fixed point. Therefore the claim holds, and $F_3$ is four-periodic on $1 \leq * \leq k_2 + 3$.

Second, the inclusion $F_3 \to F_2$ is $(\dim F_2 - 2k_3 + 1)$-connected by the Connectedness Lemma. By the upper bound $k_3 \leq \tfrac{3}{10} \dim F_2$, this inclusion is $(k_3+2)$-connected. Hence Lemma \ref{lem:uptoc} implies that $F_2$ is $4$-periodic on $1 \leq * \leq k_2 + 3$.

Third, the inclusion $F_2 \to F_1$ is $(\dim F_1 - 2k_2 + 1)$-connected. Estimating as before, we have $\dim F_1 - 2k_2 + 1 \geq k_2 + 2$, so this inclusion is $(k_2+2)$-connected. Lemma \ref{lem:uptoc} implies that $F_1$ is $4$-periodic on $1 \leq * \leq k_2 + 3$, and then Lemma \ref{lem:k1plus2k2} implies that $F_1$ is $4$-periodic on $1 \leq * \leq \dim F_1 - 2k_2 + 2$. Estimating as above, we find that $\dim F_1 - 2k_2 + 2 \geq k_1 + 3$, so $F_1$ is $4$-periodic on $1 \leq * \leq k_1 + 3$.

Finally, the Connectedness Lemma implies that the inclusion $F_1 \to M$ is $(n-2k_1+1)$- and hence $(k_1+2)$-connected, so $M$ is $4$-periodic on $1 \leq * \leq k_1 + 3$. By Lemma \ref{lem:k1plus2k2} and the estimate $k_1 \leq \tfrac n 4$, we see that $M$ is $4$-periodic on $1 \leq * \leq n - 1$.

This completes the proof if $k_1 \geq 6$ and $k_2 \geq 6$. To finish the proof, first suppose $k_1 \geq 6$ and $k_2 \leq 4$. We can choose another fixed-point component $F_2' \subseteq F_1$ that is not contained in $F_2$ and that has the property $\cod(F_2' \subseteq F_1) \leq \tfrac{3}{10} \dim F_1$. Since $F_2$ has codimension four in $F_1$, the intersection $F_2 \cap F_2'$ is either transverse or a codimension two fixed-point component of a circle action on $F_2'$. In either case, the Connectedness and Periodicity Lemmas imply that $F_2'$ has $4$-periodic cohomology on degrees $1 \leq * \leq \dim F_2' - 1$. By the Connectedness Lemma and Lemma \ref{lem:uptoc}, we can argue as above to conclude that $F_1$ is $4$-periodic on $1 \leq * \leq k_1 + 3$ and then continue as before.

Next suppose $k_1 \leq 4$. The case of $k_1 = 2$ follows immediately from the Connectedness and Periodicity Lemmas, so we may assume $k_1 = 4$. As in the previous paragraph, we obtain a fixed-point component $F_1' \subseteq M$ with $\cod(F_1' \subseteq M) \leq \tfrac{2}{7} \dim M$ and with $4$-periodic cohomology on $1 \leq * \leq \dim F_1' - 1$. Since the inclusion $F_1' \to M$ is $(n - 2k_1 + 1)$-connected, we have that $M$ has $4$-periodic cohomology on degrees $1 \leq * \leq n - 2k_1' + 2$. Since $k_1' \leq \tfrac n 4$, the periodicity goes up to $\lceil \tfrac n 2 \rceil + 2$. This is sufficient with Poincar\'e duality to conclude that $M$ is $4$-periodic on $1 \leq * \leq n - 1$.
%In particular, the $4$-periodicity goes up to degree $9$ since $n \geq 2(9) = 18$. We may now argue as in Lemma \ref{lem:k1plus2k2} with $e$ replaced by $e^2$ and the factorization $e = xy$ replaced by a factorization $e^2 = xy$ to conclude that $M$ is $4$-periodic on $1 \leq * \leq n - 1$.
\end{proof}

\begin{lemma}\label{lem:uptoc}
    Let $N \subseteq M$ be a $c$-connected inclusion of closed, orientable manifolds for some integer $c$ satisfying $0 < k < c - 1$. 
        \begin{enumerate}
            \item $H^*(N)$ is $k$-periodic on $1 \leq * \leq c$ if and only if $H^*(M)$ is as well.
            \item If $H^*(N)$ is $k$-periodic on $1 \leq * \leq c + 1$, then $H^*(M)$ is as well.
        \end{enumerate}
\end{lemma}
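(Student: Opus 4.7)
The plan is to transfer periodicity between $N$ and $M$ via the commutative square
\[
\begin{array}{ccc}
H^i(M) & \xrightarrow{\ \cup x_M\ } & H^{i+k}(M) \\
\downarrow i^* & & \downarrow i^* \\
H^i(N) & \xrightarrow{\ \cup x_N\ } & H^{i+k}(N)
\end{array}
\]
where $x_N = i^*(x_M)$. The essential input is the standard consequence of $c$-connectedness for inclusions of orientable manifolds: $H^j(M, N) = 0$ for $j \leq c$, so the long exact sequence of the pair yields that $i^*$ is an isomorphism for $j < c$ and injective for $j = c$. Since $k < c - 1$, the map $i^*$ on $H^k$ is an isomorphism, so I can lift any $x_N \in H^k(N)$ to a unique $x_M \in H^k(M)$, or pass to $x_N := i^*(x_M)$ in the opposite direction.

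For part (1), both directions reduce to a diagram chase in the range $1 \leq i \leq c - k$ that is required for $k$-periodicity up to degree $c$. When $i + k < c$, both vertical maps are isomorphisms and the required surjectivity or injectivity transfers immediately between the two rows. At the boundary $i = c - k$, only injectivity is needed by the definition: the right vertical is injective and the left is an isomorphism, which suffices in both directions, since an element in the kernel of $\cup x_M$ on top maps down to the kernel of $\cup x_N$ on bottom via the left isomorphism, and conversely any kernel element of $\cup x_N$ is the unique image under $i^*$ of a kernel element of $\cup x_M$.

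For part (2), the range extends to $1 \leq i \leq c - k + 1$, and the only new steps are at the endpoints $i = c - k$ (surjectivity) and $i = c - k + 1$ (injectivity). For surjectivity: given $y \in H^c(M)$, I use surjectivity of $\cup x_N$ to write $i^*(y) = z \cup x_N$ for some $z \in H^{c-k}(N)$, lift $z$ to $z' \in H^{c-k}(M)$ through the isomorphism $i^*$, and conclude $y = z' \cup x_M$ from injectivity of $i^*$ on $H^c$. For injectivity at $i = c - k + 1$: any kernel element of $\cup x_M$ pushes down to the kernel of $\cup x_N$, which is trivial by hypothesis, and then vanishes upstairs because $i^*$ is at least injective on $H^{c-k+1}$ (an isomorphism when $k \geq 2$, and still injective when $k = 1$ since then $c - k + 1 = c$).

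The main technicality I anticipate is that the definition of $k$-periodicity permits $x_N$ to be a product of basic periodicity elements of smaller degree rather than a single basic element. My plan is to lift each factor through $i^*$ separately, using that each factor has degree at most $k < c - 1$ so that $i^*$ is an isomorphism in that degree. Applying the diagram chase to each factor shows that the lifts are themselves basic periodicity elements on $M$, and the product of the lifts supplies the required $k$-periodicity element on $M$.
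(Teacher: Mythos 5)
Your proof is correct and takes essentially the same approach as the paper, which cites Lemma~4.8 of Kennard--Wiemeler--Wilking for part (1) and for part (2) records exactly the two extra boundary diagrams (at $H^{c-k}(M)\to H^c(M)$ and $H^{c+1-k}(M)\to H^{c+1}(M)$) that your chase works through. Your observation that a product periodicity element should be handled factor by factor, using that each factor has degree at most $k < c-1$ so that $i^*$ is an isomorphism there, is a sensible way to make the argument fully watertight under the paper's definition of inducing periodicity.
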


%\begin{proof}
Part (1) is a slight modification to \cite[Lemma 4.8]{KennardWiemelerWilking25}, which considered periodicity on $0 \leq * \leq c$ instead of $1 \leq * \leq c$, and the proof is exactly the same. Part (2) is a slight but necessary improvement we need for later, but again the proof works in the same way. Specifically, we apply (1) and then consider the two additional diagrams
   \[
\xymatrix{
H^{c-k}(M) \ar[r]^{\tilde x} \ar[d] & H^{c}(M) \ar[d] \\
H^{c-k}(N) \ar[r]^{x} & H^{c}(N)
}
\quad\quad\quad\quad
\xymatrix{
H^{c+1-k}(M) \ar[r]^{\tilde x} \ar[d] & H^{c+1}(M) \ar[d] \\
H^{c+1-k}(N) \ar[r]^{x} & H^{c+1}(N)
}
\]
in the notation of \cite[Lemma 4.8]{KennardWiemelerWilking25}, and we use the surjectivity and injectivity, respectively, on $N$ to conclude the corresponding property on $M$.

\begin{lemma}\label{lem:k1plus2k2}
    Let $M^n$ be a closed, orientable, Riemannian manifold with $\Ric_2>0$. 
    Suppose $N^{n-k}$ is a fixed-point component of an isometric circle action on $M$ and satisfies $k \geq 6$.
    \begin{enumerate}
        \item If $H^*(M;\Q)$ is $4$-periodic from degree $1$ to %$\max(k+3, 9)$, 
        $k+3$, then $H^*(M;\Q)$ is $4$-periodic from degree $1$ to $n - 2k + 2$.       
        \item If additionally $k \leq \tfrac{n+3}{4}$, then $H^*(M;\Q)$ is $4$-periodic from degree $1$ to $n - 1$.
    \end{enumerate}
\end{lemma}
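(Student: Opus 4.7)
The plan is to combine the Connectedness Lemma with the Periodicity Lemma to produce an auxiliary class $e\in H^k(M;\bZ)$ with partial $k$-periodicity on $M$, use the hypothesized 4-periodicity on $[1,k+3]$ to write $e=yf$ for some $f\in H^{k-4}(M;\bQ)$, and then exploit the two factorizations $(e\cup)=(f\cup)\circ(y\cup)=(y\cup)\circ(f\cup)$ to extend the $y$-periodicity. The hard part is extracting isomorphism information about $(f\cup)$ on its own and then using it to cancel $(f\cup)$ from a composite, which will yield surjectivity of $(y\cup)$ in the critical middle range $[k-1,n-2k-3]$.

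By Theorem \ref{thm:CL} the inclusion $N\hookrightarrow M$ is $(n-2k+1)$-connected, so applying the Periodicity Lemma \ref{thm:PT} with $l=k-1$ yields $e\in H^k(M;\bZ)$ whose cup product is surjective on $H^i\to H^{i+k}$ for $i\in[k-1,n-2k]$ and injective for $i\in[k,n-2k+1]$; by flatness of $\bQ$ over $\bZ$, this survives passage to $\bQ$-coefficients. Since $k\ge 6$ implies $k-4\in[1,k-2]$, the hypothesis gives $(y\cup):H^{k-4}(M;\bQ)\to H^k(M;\bQ)$ surjective, so $e=yf$ for some $f\in H^{k-4}(M;\bQ)$. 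Because $|y|=4$ is even, $y$ and $f$ commute in the graded ring, giving the two factorizations $(e\cup)=(f\cup)\circ(y\cup)=(y\cup)\circ(f\cup)$.

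From these factorizations, injectivity of $(e\cup)$ on $[k,n-2k+1]$ forces both $(y\cup):H^i\to H^{i+4}$ and $(f\cup):H^i\to H^{i+k-4}$ to be injective on the same range, and surjectivity of $(e\cup)=(f\cup)\circ(y\cup)$ on $[k-1,n-2k]$ forces the last factor $(f\cup):H^{i+4}\to H^{i+k}$ to be surjective, i.e., $(f\cup):H^j\to H^{j+k-4}$ is surjective for $j\in[k+3,n-2k+4]$. Intersecting these ranges, $(f\cup):H^j\to H^{j+k-4}$ is an \emph{isomorphism} for $j\in[k+3,n-2k+1]$. For $i\in[k-1,n-2k-3]$ we have $i+4\in[k+3,n-2k+1]$, so $(f\cup):H^{i+4}\to H^{i+k}$ is an isomorphism; together with surjectivity of $(e\cup)=(f\cup)(y\cup)$ at degree $i$, this forces $(y\cup):H^i\to H^{i+4}$ to itself be surjective. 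Combining with the hypothesis, $(y\cup)$ is surjective on $[1,n-2k-3]$ and injective on $[2,n-2k+1]$, which is exactly $4$-periodicity on $[1,n-2k+2]$ and proves Part~(1).

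For Part~(2), apply Poincar\'e duality on the closed orientable $n$-manifold $M$: surjectivity of $(y\cup)$ on $H^i\to H^{i+4}$ is equivalent to injectivity on $H^{n-i-4}\to H^{n-i}$, so dualizing the ranges from Part~(1) gives injectivity on $[2k-1,n-5]$ (from surjectivity on $[1,n-2k-3]$) and surjectivity on $[2k-5,n-6]$ (from injectivity on $[2,n-2k+1]$). The unions $[1,n-2k-3]\cup[2k-5,n-6]$ and $[2,n-2k+1]\cup[2k-1,n-5]$ cover the target ranges $[1,n-6]$ and $[2,n-5]$ respectively precisely when $2k-5\le n-2k-2$, i.e., $4k\le n+3$, which is the stated hypothesis. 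Notice that the slightly stronger injectivity range $[2,n-2k+1]$ established in Part~(1), rather than the nominal $[2,n-2k-2]$ that would suffice for Part~(1) alone, is what makes the ``$+3$'' slack in the condition $k\le(n+3)/4$ work out exactly.
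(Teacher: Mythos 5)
Your proof is correct and follows essentially the same route as the paper's argument, which defers to the proof of \cite[Lemma 4.10]{KennardWiemelerWilking25} with the modifications described (replace $k-2$ by $k-1$ in the Periodicity Lemma ranges because the $\Ric_2$ Connectedness Lemma gives two fewer degrees of connectedness, and correspondingly shift the hypothesis to $k+3$ and the conclusion to $n-2k+2$). You have written out that argument from scratch: the Connectedness Lemma gives a $(n-2k+1)$-connected inclusion, the Periodicity Lemma with $l = k-1$ produces $e\in H^k(M;\Z)$ with the stated surjectivity/injectivity ranges, the hypothesized $4$-periodicity up to $k+3$ lets you factor $e = yf$, and comparing the two compositions $(f\cup)(y\cup)$ and $(y\cup)(f\cup)$ against the ranges for $(e\cup)$ extends the surjectivity/injectivity of $(y\cup)$ as required. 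For Part (2), the paper's phrasing (``the element $z\in H^{n-i}(M)$ in the proof factors'') is the element-level version of the Poincar\'e-duality argument you state directly in terms of dualizing the injectivity and surjectivity ranges; the overlap condition you derive, $4k\leq n+3$, is exactly the hypothesis. The only difference is presentation: the paper is terse and refers out to KWW25, while yours is self-contained.
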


\begin{proof}
    The proof of (1) and (2) follow exactly that of \cite[Lemma 4.10]{KennardWiemelerWilking25} in the case where $g = 4$. In particular, the map $H^{k-4}(M) \to H^k(M)$ given by Four Periodicity up to $k+3$ is surjective since $k \geq 6$, so we get a factorization of the form $e = xy$ where $x$ has degree four as in the proof of \cite[Lemma 4.10]{KennardWiemelerWilking25}. 
    
    For the first claim, the only modifications to the proof required are replacing $k - 2$ by $k - 1$ in the bounds coming from the Periodicity Lemma. This explains why we require Four Periodicity up to degree $k+3$ instead of just $k+2$ and why our conclusion only goes up to $n - 2k + 2$ instead of $n - 2k+3$. This is the price for passing from positive sectional curvature to positive $\Ric_2$.

    For the second claim, the proof from \cite{KennardWiemelerWilking25} follows, with the following modifications. We have injectivity of the map $H^i(M) \to H^{i+4}(M)$ given by multiplication by $x$ for all $1 \leq i \leq n - 2k + 1$. We claim that this map is also injective for $n - 2k + 2 \leq i \leq n - 5$. In this range, we have $5 \leq n - i < n - 2k + 2$ since $k \leq \tfrac{n+3}{4}$, therefore the element $z \in H^{n-i}(M)$ in the proof factors, and the rest of the proof is the same.
%
    %To finish the proof, we assume $k \leq 4$. The case of $k = 2$ follows immediately from the Connectedness and Periodicity Lemmas (since two-periodicity implies Four Periodicity), so we assume $k = 4$.
    %
    %We look at the element $e^2 \in H^8(M)$, where $e \in H^4(M)$ is the element inducing periodicity from degree $3$ to $n - 3$ coming from the Connectedness and Periodicity Lemmas. Since $H^*(M)$ is $4$-periodic up to degree $9$, we obtain a factorization $e^2 = xy$ where $x$ is the degree four element inducing periodicity from $1$ to $9$. (NEED TO CHECK THE BOUNDS NOW...)
%
 %    or:
  %   Consider the element $x$ inducing periodicity in degrees $\le 9$. Since $\cup e$ is surjective into degree $8$, $e$ divides $x^2$, so induces periodicity in degrees $1, \ldots, 9$ as well.
\end{proof}

\appendix

\section{Semi-simple operators}

In this appendix, we record a few elementary statements on the existence of direct sum decompositions and semi-simple operators for later use. In particular, Part \eqref{lem:ssPart4} of Lemma \ref{lem:ss} is used in the proof of the Decomposition Lemma (Lemma \ref{lem:Decomposition}).

Recall that, for a vector space $V$, a {\it complement} of a subspace $U$ is another subspace $U^c$ such that $U \cap U^c = 0$, $U + U^c = V$, and $\dim V = \dim U + \dim U^c$. Any two of these conditions implies the third by the equation
	\[\dim(A + B) = \dim A + \dim B - \dim(A \cap B)\]
Given subspaces $V_1$ and $V_2$ whose intersection is trivial, we write the span $V_1 + V_2$ as the {\it direct sum} $V_1 \oplus V_2$ since it is isomorphic to the abstract direct sum of the two vector spaces.

\begin{lemma}\label{lem:DirectSum}
%Let $V$ be a vector space.
%	\begin{enumerate}
%    \item Let $U$ be a subspace.
Let $V$ be a vector space and $U$ be a subspace.
		\begin{enumerate}
	\item\label{lem:DirectSumPart1} %(If $V$ splits, then maybe $U$ splits.) 
		If $V = V_1 \oplus V_2$ and $V_1 \subseteq U$, then $U = (U \cap V_1) \oplus (U \cap V_2)$. 
		%, and equality holds when $U \supseteq V_1$.
		
		\item\label{lem:DirectSumPart2} %(If $U$ splits, then maybe $V$ splits.) 
		If $U = U_1 \oplus U_2$ and $U_1 \subseteq U_2^c$, 
		then $V = U \oplus (U_1^c \cap U_2^c)$ where $U_i^c$ is a complement of $U_i$ in $V$ for $i \in \{1,2\}$. 
		%Moreover, the sum is direct if $U_1 \subseteq U_2^c$.
		
		\item\label{lem:DirectSumPart3} 
		If $U \subseteq W \subseteq V$ and $U^c$ and $W^c$ are complements of $U$ in $W$ and of $W$ in $V$, 
		respectively, then $U^c \oplus W^c$ is a complement of $U$ in $V$.
		%If $U \oplus U^c = W$ and $W \oplus W^c = V$, then $U \oplus \of{U^c + W^c} = V$.
%		\end{enumerate}

%	\item Let $V \to \bar V$ be a projection with kernel $K$.
%		\begin{enumerate}
%		\item\label{lem:DirectSumPart4}
%		If $V = V_1 \oplus V_2$ with $K \subseteq V_1$, then $\bar V = \bar V_1 \oplus \bar V_2$, 
%		where $\bar V_i$ is the image of $V_i$ under the projection for $i \in \{1,2\}$.
		% \item\label{lem:DirectSumPart5}
		% If $\bar V = \bar V_1 \oplus \bar V_2$ and if $V_1'$ is a complement of $K$ in $V_1$, then $V = V_1' \oplus V_2$, where $V_i$ is the preimage of $\bar V_i$ for $i \in \{1,2\}$. % the preimage $V_1$ of $\bar V_1$ contains $K$ and decomposes as $V_1 = K \oplus V_1'$, then $V = V_1' \oplus V_2$ where $V_2$ is the preimage of $\bar V_1$.
		% \end{enumerate}
	\end{enumerate}
\end{lemma}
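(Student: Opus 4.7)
The plan is to dispatch all three parts by elementary linear algebra, relying only on the definition of complement and on the dimension formula $\dim(A+B) = \dim A + \dim B - \dim(A \cap B)$ recalled in the setup. Each part amounts to verifying that a certain intersection is zero and then confirming, either by a direct decomposition or by a dimension count, that the sum fills the ambient space.

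For Part \eqref{lem:DirectSumPart1}, since $V_1 \subseteq U$ we have $U \cap V_1 = V_1$, so the claim reduces to $U = V_1 \oplus (U \cap V_2)$. Given $u \in U$, I would decompose $u = v_1 + v_2$ via the direct sum $V = V_1 \oplus V_2$; then $v_2 = u - v_1$ lies in $U$ because $v_1 \in V_1 \subseteq U$, so $v_2 \in U \cap V_2$. Triviality of the intersection $V_1 \cap (U \cap V_2)$ is automatic from $V_1 \cap V_2 = 0$.

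For Part \eqref{lem:DirectSumPart2}, the main step is to check $U \cap (U_1^c \cap U_2^c) = 0$. Writing an element $u$ of this intersection as $u = u_1 + u_2$ with $u_i \in U_i$, the hypothesis $U_1 \subseteq U_2^c$ places $u_1 \in U_2^c$, and since $u$ itself lies in $U_2^c$, the difference $u_2 = u - u_1$ lies in $U_2 \cap U_2^c = 0$. This forces $u = u_1 \in U_1 \cap U_1^c = 0$. Having established disjointness, the identity $V = U + (U_1^c \cap U_2^c)$ follows from the dimension inequality
\[
\dim(U_1^c \cap U_2^c) \geq \dim U_1^c + \dim U_2^c - \dim V = \dim V - \dim U,
\]
combined with $\dim U + \dim(U_1^c \cap U_2^c) \leq \dim V$ for subspaces whose intersection is zero.

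For Part \eqref{lem:DirectSumPart3}, I would first observe that $U^c \subseteq W$ and $W^c \cap W = 0$ imply $U^c \cap W^c = 0$, so $U^c \oplus W^c$ is a genuine direct sum. The chain $U + U^c = W$ and $W + W^c = V$ provides the spanning. Finally, if $u \in U$ admits a decomposition $u = u' + w$ with $u' \in U^c \subseteq W$ and $w \in W^c$, then $w = u - u'$ lies in $W \cap W^c = 0$, leaving $u = u' \in U \cap U^c = 0$. None of these steps presents a genuine obstacle; the most delicate bookkeeping appears in Part \eqref{lem:DirectSumPart2}, where one must remember that $U_1^c$ and $U_2^c$ are complements taken in $V$ rather than inside $U$, and that only the asymmetric inclusion $U_1 \subseteq U_2^c$ is available.
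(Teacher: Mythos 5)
Your arguments are correct. The paper itself omits the proof, remarking only that it is straightforward, so there is nothing to literally compare against; what you have supplied is a valid elementary verification. For Parts \eqref{lem:DirectSumPart1} and \eqref{lem:DirectSumPart3} your decompositions are the natural ones. For Part \eqref{lem:DirectSumPart2}, you verify disjointness directly and then finish with a dimension count via
\[
\dim(U_1^c \cap U_2^c) \ge \dim U_1^c + \dim U_2^c - \dim V = \dim V - \dim U,
\]
which is clean. An alternative structural route, which the authors evidently had in mind, is to first apply Part \eqref{lem:DirectSumPart1} with $U$ replaced by $U_2^c$ and $(V_1, V_2) = (U_1, U_1^c)$ to get $U_2^c = U_1 \oplus (U_1^c \cap U_2^c)$, and then appeal to associativity of direct sums (a consequence of Part \eqref{lem:DirectSumPart3}) to add $U_2$ to both sides, yielding $V = U_2 \oplus U_1 \oplus (U_1^c \cap U_2^c)$. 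That derivation exhibits Part \eqref{lem:DirectSumPart2} as a corollary of the other two parts, whereas your computation keeps the three parts logically independent. Both are fine; yours is arguably more self-contained, at the cost of a dimension argument that does not generalize to infinite-dimensional settings (irrelevant here since the paper works with finite-dimensional cohomology).
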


In all parts but \eqref{lem:DirectSumPart3}, there is an additional condition present that cannot be dropped. In all parts, the proof is straightforward and therefore omitted. 

An endomorphism $f$ of a vector space $V$ is {\it semi-simple} if every invariant subspace has an invariant complement. This means that if $U \subseteq V$ is a subspace that satisfies $f(U) \subseteq U$, then there exists a complementary subspace $U^c$ such that $f(U^c) \subseteq U^c$.

\begin{lemma}%[Basic properties of semi-simple maps]
\label{lem:ss}
Let $f:V \to V$ be a semi-simple operator. The following hold:
	\begin{enumerate}
	\item\label{lem:ssPart1} For any $f$-invariant subspace $V' \subseteq V$, the restriction $f|_{V'}$ is semi-simple.
	% \item Let $\pi:V \to \bar V$ be a projection and let $f$ and $\bar f$ be endomorphisms of $V$ and $\bar V$, respectively, such that $\pi \circ f = \bar f \circ \pi$. The following hold:
	% 	\begin{enumerate}
	% 	\item\label{lem:ssPart2} If $f$ is semi-simple, then $\bar f$ is semi-simple.
	% 	\item\label{lem:ssPart3} If $\bar f$ is semi-simple and if the kernel of $\pi$ has an $f$-invariant complement in every $f$-invariant subspace containing it, then $f$ is semi-simple.
	% 	\end{enumerate}
	\item\label{lem:ssPart4} For any $k \geq 1$, $f^k$ is semi-simple.
	\end{enumerate}
\end{lemma}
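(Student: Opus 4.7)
For Part \eqref{lem:ssPart1}, my plan is to lift to $V$ and restrict back. Given an $f|_{V'}$-invariant subspace $U \subseteq V'$, the subspace $U$ is also $f$-invariant when viewed inside $V$, so the semi-simplicity of $f$ on $V$ supplies an $f$-invariant complement $W$ of $U$ in $V$. Applying Lemma \ref{lem:DirectSum}, Part \eqref{lem:DirectSumPart1}, to $V = U \oplus W$ together with the containment $U \subseteq V'$, I obtain $V' = U \oplus (V' \cap W)$, and the subspace $V' \cap W$ is $f$-invariant as the intersection of two $f$-invariant subspaces. This will be the required $f|_{V'}$-invariant complement.

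For Part \eqref{lem:ssPart4}, my plan is to use the primary decomposition coming from the minimal polynomial of $f$. Since the coefficient field $\bZ_p$ appearing in the paper is perfect, the hypothesis that $f$ is semi-simple on the finite-dimensional space $V$ is equivalent to its minimal polynomial $m_f$ being squarefree, so $m_f = p_1 \cdots p_r$ with the $p_i$ distinct monic irreducibles. By the Chinese Remainder Theorem, $V = \bigoplus_{i=1}^r \ker p_i(f)$, and on each summand $f$ acts through the field $K_i := \bZ_p[x]/(p_i)$ as multiplication by the class $\alpha_i$ of $x$. Consequently $f^k$ acts on $\ker p_i(f)$ as $\bZ_p$-linear multiplication by the scalar $\alpha_i^k \in K_i$, whose minimal polynomial over $\bZ_p$ is the (irreducible) minimal polynomial of a field element. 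Hence the minimal polynomial of $f^k$ on $V$ is the least common multiple of these irreducibles, which is squarefree, so $f^k$ is semi-simple.

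The main obstacle is the equivalence between semi-simplicity of $f$ and squarefreeness of $m_f$, which I would invoke as the standard minimal-polynomial criterion available over a perfect field. If one prefers to avoid this piece of machinery and argue more elementarily, an alternative plan is the following: reduce to the case where $V$ is $f$-simple by decomposing $V$ into $f$-simple summands, on each of which $f^k$ becomes scalar multiplication by an element of the residue field $K_i$ and hence manifestly decomposes into one-dimensional $K'$-eigenspaces over the subfield $K' = \bZ_p[\alpha_i^k] \subseteq K_i$; then combine the summands via the standard fact that a sum of $f^k$-semi-simple submodules is $f^k$-semi-simple. Both routes rely only on tools already present in or immediately adjacent to the paper.
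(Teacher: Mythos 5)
Your Part \eqref{lem:ssPart1} is exactly the paper's argument: lift the $f|_{V'}$-invariant $U$ to $V$, take an $f$-invariant complement $W$ there, and use Lemma \ref{lem:DirectSum}, Part \eqref{lem:DirectSumPart1}, to get $V' = U \oplus (V' \cap W)$. Your Part \eqref{lem:ssPart4}, however, takes a genuinely different route. The paper gives a purely elementary, self-contained induction on $\dim V$ (with a sub-induction on $\dim U$): after reducing via Claims~1 and~2 to the case where $U$ sits in no intermediate $f$-invariant subspace and contains no nonzero proper $f^k$-invariant subspace, it produces the complement explicitly as $W_{i+1} = f^{i+1}(U) + \cdots + f^{k-1}(U)$ for a suitable $i$, using only vector-space bookkeeping and Lemma \ref{lem:DirectSum}. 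You instead invoke the structure theory of $\bZ_p[x]$-modules: semi-simplicity of $f$ is equivalent to squarefreeness of the minimal polynomial $m_f$, the primary decomposition $V = \bigoplus \ker p_i(f)$ makes $f^k$ act on each summand as multiplication by $\alpha_i^k$ in the field $K_i = \bZ_p[x]/(p_i)$, and the minimal polynomial of a field element is irreducible, so $m_{f^k}$ is again squarefree. This is correct and considerably shorter, at the cost of importing the module-theoretic machinery that the paper deliberately avoids. One small inaccuracy: you appeal to perfectness of $\bZ_p$ to justify the criterion ``semi-simple iff $m_f$ squarefree,'' but that equivalence holds over any field (it follows from the structure theorem for finitely generated modules over the PID $\bZ_p[x]$). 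Perfectness would only be needed if you wanted semi-simplicity to persist under scalar extension, which your argument does not use.
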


\begin{proof} 
%The proofs of Parts \eqref{lem:ssPart1}, \eqref{lem:ssPart2}, and \eqref{lem:ssPart3} are straightforward applications of Parts \eqref{lem:DirectSumPart1}, \eqref{lem:DirectSumPart4}, and \eqref{lem:DirectSumPart5}, respectively, of Lemma \ref{lem:DirectSum}, so we omit the proofs. %(PROOF IS IN COMMENTS)
The proof of Part \eqref{lem:ssPart1} is a straightforward application of Part \eqref{lem:DirectSumPart1} of Lemma \ref{lem:DirectSum}, so we omit the proof.

To prove Part \eqref{lem:ssPart4}, fix a semi-simple operator $f:V \to V$. 
The claim is that, for all $f^k$-invariant subspaces $U \subseteq V$, there exists an $f^k$-invariant subspace $W$ such that $U \oplus W = V$. We prove this by induction over $\dim V$ then by sub-induction over $\dim U$. The base cases where $\dim V = 0$ or $\dim U = 0$ are trivial, so we proceed to the induction step.

{\bf Claim 1}: We may assume $U$ is not contained in an intermediate $f$-invariant subspace.% of $V$.

If it were, say $U \subseteq W \subseteq V$, then $W$ has an $f$-invariant complement $W^c$ in $V$. By Lemma \ref{lem:ss}, Part \eqref{lem:ssPart1}, $f|_W$ is semi-simple, so by induction $U$ has an $f^p$-invariant complement $U^c$ in $W$. By Lemma \ref{lem:DirectSum}, Part \eqref{lem:DirectSumPart3}, $U^c + W^c$ is a complement of $U$ in $V$. This complement is $f^p$-invariant, so the lemma follows in this case.

{\bf Claim 2}: We may assume $U$ does not contain an intermediate $f^p$-invariant subspace.

If it did, say $W \subseteq U$, then by the subinduction hypothesis there exists an $f^p$-invariant complement $W^c$ of $W$ in $V$. By Lemma \ref{lem:DirectSum}, Part \eqref{lem:DirectSumPart1}, 
	\[U = W \oplus (U \cap W^c).\]
Notice that $U \cap W^c$ is also $f^p$-invariant and has dimension strictly less than $U$. 
Therefore the subinduction hypothesis again implies that it has an $f^p$-invariant complement $(U \cap W^c)^c$ in $V$. 
Since $U_1 = U \cap W^c$ is contained in the complement $W^c$ of $U_2 = W$, Part \eqref{lem:DirectSumPart2} of Lemma \ref{lem:DirectSum} implies that
	\[V = U \oplus \of{(U \cap W^c)^c \cap W^c}.\]
We have constructed a complement of $U$, and it is clear that it is the desired $f^p$-invariant complement.

\medskip
With the proofs of Claims 1 and 2 complete, we finish the proof of the theorem. Consider the subspace
	\[U + f(U) + f^2(U) + \ldots + f^{k-1}(U).\]
This is $f$-invariant since $U$ is $f^k$-invariant. This subspace contains $U$, so by Claim 1 we may assume it equals $U$ or $V$. In the former case, $U$ is already $f$-invariant and hence has an $f$-invariant complement $U^c$, and hence the proof is complete since $U^c$ is also $f^k$-invariant. Therefore we may assume this subspace is $V$.

Now consider more generally the $f^k$-invariant subspaces
	\[W_i = f^i(U) + f^{i+1}(U) + \ldots + f^{k-1}(U)\]
where $0 \leq i \leq k-1$. Assume $i$ is maximal such that $W_i$ contains $U$. By the argument in the previous paragraph, we may assume that $W_i = V$. We claim that $W_{i+1}$ is a complement to $U$ and hence a solution to our problem. To see this, we first note that $U \cap W_{i+1} = 0$ since it is an $f^k$-invariant subspace of $U$ that is neither $U$ by maximality of $i$ nor non-zero and strictly contained in $U$ by Claim 2. To complete the proof, it suffices to prove that $U$ and $W_{i+1}$ span $V$, and we prove this by counting dimensions. Note that $f^i(U) + W_{i+1} = W_i = V$, so
	\[\dim f^i(U) + \dim W_{i+1} \geq \dim V.\] % + \dim\of{f^i(U) \cap W_{i+1}}.\]
Since $\dim U \geq \dim f^i(U)$, we have $\dim U + \dim W_{i+1} = \dim V$, as needed.
\end{proof}

\bibliographystyle{alpha}
\bibliography{bibfile}

\end{document}